\begin{document}

\thispagestyle{empty}

\newtheorem{theorem}{Theorem}[section]
\newtheorem*{thm}{Theorem}
\newtheorem{lemma}[theorem]{Lemma}
\newtheorem{cor}[theorem]{Corollary}
\renewcommand{\proofname}{Proof}
\newtheorem{property}[theorem]{Property}
\newtheorem{prop}[theorem]{Proposition}

\theoremstyle{definition}
\newtheorem{defin}[theorem]{Definition}
\newtheorem{question}[theorem]{Question}
\newtheorem{remark}[theorem]{Remark}
\newtheorem*{nota}{Notation}
\newtheorem{example}[theorem]{Example}

\def \H{{\mathcal H}}
\def \N{{\mathbb N}}
\def \R{{\mathbb R}}
\def \X{{\mathbb X}}
\def \E{{\mathbb E}}
\def \Z{{\mathbb Z}}
\def \S{{\mathbb S}}
\def \G{{\mathcal G}}
\def \D{{\mathcal D}}
\def \P{{\mathcal P}}
\def \R{{\mathcal R}}
\def \C{{\mathcal C}}
\def \Q{{\mathcal Q}}
\def \Gr{{\mathrm Gr}}
\def \Vol{{\mathrm Vol}}
\def \l{\langle }
\def \r{\rangle }
\def \[{[ }
\def \]{] }
\def \d{D\,}
\def \t{\widetilde}
\def \sign{\text{\,sign\,}}
\def \conv{\text{\,conv\,}}
\def \wt{\widetilde}
\def \wh{\widehat}
\def \a{\alpha}
\def \b{\beta}
\def \dim{\text{\,dim\,}}
\def \rank{\text{\,rank\,}}
\def \Symb{\text{\,Simb\,}}
\def \relint{\text{rel\.int\,}}

\newcommand{\arcsinh}{\mathop{\mathrm{arcsinh}}\nolimits}
\newcommand{\vn}{\mathop{\mathrm{int}}\nolimits}
\newcommand{\rel}{\mathop{\mathrm{rel\:int}}\nolimits}
\newcommand{\w}{\widetilde }
\renewcommand{\o}{\overline }



\title[Coxeter groups from cluster algebras]{Coxeter groups and their quotients \\ arising from cluster algebras }

\author{Anna Felikson and Pavel Tumarkin}
\address{Department of Mathematical Sciences, Durham University, Science Laboratories, South Road, Durham, DH1 3LE, UK}
\email{anna.felikson@durham.ac.uk, pavel.tumarkin@durham.ac.uk}
\thanks{Research was supported in part by grant RFBR 11-01-00289-a}
\dedicatory{To the memory of Andrei Zelevinsky}

\begin{abstract} 
In~\cite{BM}, Barot and Marsh presented an explicit construction of presentation of a finite Weyl group $W$ by any initial seed of corresponding cluster algebra, i.e. by any diagram mutation-equivalent to an orientation of a Dynkin diagram with Weyl group $W$. We obtain similar presentations for all affine Coxeter groups.
Furthermore, we generalize the construction to the settings of diagrams arising from unpunctured triangulated surfaces and orbifolds, which leads to presentations of corresponding groups as quotients of numerous distinct Coxeter groups.

\end{abstract}

\maketitle
\tableofcontents

\section{Introduction}

In~\cite{FZ2}, Fomin and Zelevinsky provide a classification of cluster algebras of finite type: they show that these cluster algebras are classified by Dynkin diagrams, and there is one-to-one correspondence between cluster variables on one side, and positive roots and negatives of simple roots on the other side. 
In the same paper, Fomin and Zelevinsky associate to every seed of a skew-symmetrizable cluster algebra a {\it diagram} constructed by the corresponding exchange matrix; mutations of these diagrams encode the mutations of exchange matrices. 

Starting from an arbitrary diagram of a cluster algebra of finite type, Barot and Marsh~\cite{BM} provide a presentation of the corresponding finite Weyl group. The construction works as follows: one needs to consider the underlying unoriented labeled graph of a diagram as a Coxeter diagram of a Coxeter group, and then introduce some additional relations on this group that can be read off from the diagram. These additional relations come from {\it oriented cycles} of the diagram and can be written as follows: for any chordless oriented cycle
$$i_0\stackrel{w_1}{\to}i_1\stackrel{w_2}{\to}\dots \stackrel{w_{d-1}}{\to}i_{d-1}\stackrel{w_0}{\to}i_0$$
in the diagram, where either $w_0=2$ or all $w_i=1$, we have
$$(s_{i_0}s_{i_1}\dots s_{i_{d-2}}s_{i_{d-1}}s_{i_{d-2}}\dots s_{i_{1}})^2=e.$$
The resulting group occurs to depend on the mutation class of the diagram only.

The presentations of finite Weyl groups as quotients of other Coxeter groups lead to interesting consequences. For example, in~\cite{FeTu} these presentations are used to construct hyperbolic manifolds having large symmetry groups and relatively small volumes. Further, the construction of Barot and Marsh implies that for every Weyl group there exists a distinguished set of generating tuples of reflections (the collections of corresponding roots are called {\it companion bases} in~\cite{P} and then in~\cite{BM}). According to results of~\cite{Fe}, the companion bases do not exhaust all the minimal generating tuples of relections of a Weyl group. The question whether there is a geometric characterization of companion bases is really intriguing.    

\medskip

The aim of the present paper is to obtain similar results for affine Weyl groups
and to generalize the construction to the case of diagrams arising from unpunctured triangulated surfaces and orbifolds.


\bigskip

\noindent
Let $\widetilde \G$ be an orientation of an affine Dynkin diagram with $n+1$ nodes different from an oriented cycle, let $W$ be the corresponding affine Coxeter group, and $\G$ be any diagram mutation-equivalent to $\widetilde \G$. Denote by $W_{\G}$ the group generated by $n+1$ generators $s_i$ with the following relations:

\begin{itemize}
\item[(1)] $s_i^2=e$ for all $i=1,\dots,n$;

\item[(2)] $(s_is_j)^{m_{ij}}=e$ for all $i,j$ not joined by an edge labeled by $4$, where
$$
m_{ij}=
\begin{cases}
2 & \text{if $i$ and $j$ are not joined;} \\
3 & \text{if $i$ and $j$ are joined by an edge labeled by $1$;} \\
4 & \text{if $i$ and $j$ are joined by an edge labeled by $2$;} \\
6 & \text{if $i$ and $j$ are joined by an edge labeled by $3$.} 
\end{cases}
$$

\item[(3)] (cycle relation) for every chordless oriented cycle $\C$ given by 
$$i_0\stackrel{w_{i_0i_1}}{\to} i_1\stackrel{w_{i_1i_2}}{\to}\cdots\stackrel{w_{i_{d-2}i_{d-1}}}{\to} i_{d-1}\stackrel{w_{i_{d-1}i_0}}{\to}i_0$$
and for every $l=0,\dots,d-1$ we define a number 
$$t(l)=\left(\prod\limits_{j=l}^{l+d-2}\!\!\!\!\sqrt{w_{i_ji_{j+1}}}\ -\sqrt{w_{i_{l+d-1}i_l}}\right)^2,$$ 
where the indices are considered modulo $d$;
now for every $l$ such that $t(l)<4$, we take a relation
$$
(s_{i_l}s_{i_{l+1}}\dots s_{i_{l+d-1}}s_{i_{l+d-2}}\dots s_{i_{l+1}})^{m(l)}=e,
$$
where 
$$
m(l)=
\begin{cases}
2 & \text{if $t(l)=0$;} \\
3 & \text{if $t(l)=1$;} \\
4 & \text{if $t(l)=2$;} \\
6 & \text{if $t(l)=3$} 
\end{cases}
$$
(this form of cycle relations was introduced by Seven in~\cite{S}).

\item[(4)] (additional affine relations) for every subdiagram of $\G$ of the form shown in the first column of Table~\ref{add-t} we take the relations listed in the second column of the table. 
\end{itemize}

%
 
The group constructed does not depend on the choice of a diagram in the mutation class of $\widetilde \G$ and
is isomorphic to the initial affine Coxeter group $W$:

\bigskip
\noindent
{\bf Theorem~\ref{thm-aff}}.
{\it
The group $W_{\G}$ is isomorphic to $W$.
}

\medskip
\noindent
In particular, $W_{\G}$ is an affine Coxeter group and is invariant under mutations of the diagram $\G$.

\bigskip
\noindent
As a next step, we want to generalize the construction to the case of mutation-finite diagrams.
Any mutation-finite diagram of order bigger than 2 is either one arising from a triangulated surface/orbifold
or one of the finitely many exceptional mutation types (see Theorem~\ref{class}).

In this paper, we consider the case of unpunctured triangulated surfaces and orbifolds
as well as all the exceptional finite mutation types.
The definition of a group $W_\G$ for a diagram $\G$ arising from unpunctured surface or orbifold 
(see Definition~\ref{def surf})
includes relations (1)--(4) above as well as two more relations corresponding to triangulated handles attached to the surface (or orbifold):

\begin{itemize}
\item[(5)] (additional relations for a handle) 
$$
(s_{1}s_{2}s_{3}s_{4}s_{3}s_{2})^3=e
\text{ \quad and \quad }
(s_{1}s_{2}s_{3}s_{4}s_{5}s_{4}s_{3}s_{2})^2=e
$$
for all subdiagrams  of type $\H_0$ and $\H$ shown in Fig.~\ref{rel-handle}.

\end{itemize}

Surprisingly, this small addition to the affine version of the definition is sufficient for the invariance of the group:

\bigskip
\noindent
{\bf Theorem~\ref{thm surf/orb}}.
{\it
If $\G$ is a diagram arising from an unpunctured surface or orbifold and 
 $W_{\G}$ is a group defined as above, then $W_{\G}$ is invariant under the mutations of $\G$.
}

\bigskip
In contrast to the groups defined by diagrams of finite and affine types, in the case of diagrams arising from
surfaces or orbifolds the group $W_\G$ is usually not a Coxeter group but a quotient of a Coxeter group.

\bigskip
It turns out that in the case of exceptional diagrams one can use almost the same definition
of the group $W_\G$ as in the affine case: we only add one additional relation
$$(s_1s_0s_2s_0s_1s_3s_0s_4s_0s_3)^2=e$$
for the diagram $X_5$ shown in Fig.~\ref{X5}.

\bigskip
\noindent
{\bf Theorem~\ref{thm exc}}.
{\it
If $\G$ is a diagram of the exceptional finite mutation type 
(i.e. $\G$ is mutation-equivalent to one of $X_6,X_7,E_6^{(1,1)},E_7^{(1,1)},E_8^{(1,1)},G_2^{(*,+)},G_2^{(*,*)},F_4^{(*,+)}$ and 
$F_4^{(*,*)}$, see Table~\ref{exceptional})
then  the group  $W_{\G}$ is invariant under mutations of $\G$.
}

\bigskip
\noindent
As for diagrams arising from surfaces or orbifolds, the group
defined is not a Coxeter group but a quotient of a Coxeter group.

\bigskip

\noindent
The paper is organized as follows. In Section~\ref{bckgr}, we collect
preliminaries: we define mutations of diagrams, diagrams of finite,
affine and finite mutation type, we also discuss
diagrams arising from triangulated surfaces and orbifolds and their
block decompositions.   
In Section~\ref{subd}, we prove some auxiliary technical facts about
subdiagrams of mutation-finite diagrams.

In Section~\ref{aff group def}, we construct the group $W_\G$ for an affine diagram $\G$.
As it is explained in Section~\ref{redundance}, our definition contains some
redundant relations, which are excluded in the same section. Section~\ref{invariance}
is devoted to the proof of Theorem~\ref{thm-aff}. The proof mainly follows
one from~\cite{BM}, however we try to substitute computations by
geometric arguments coming from surface or orbifold presentations whenever possible.
In Section~\ref{difference}, we show that the additional affine relations are
essential in the sense that without these relations the group
$W_{\G}$ would not be invariant under mutations.

In Section~\ref{unpunctured}, we extend the construction of  the group $W_\G$ to the
case of diagrams arising from triangulated surfaces and orbifolds and
prove the invariance of the groups obtained (Theorem~\ref{thm surf/orb}).

Finally, in Section~\ref{except} we construct the group $W_{\G}$ for all
exceptional diagrams and prove invariance of this group under mutations (Theorem~\ref{thm exc}).

\bigskip

We are grateful to Robert Marsh for helpful discussions.  We also thank Aslak Buan and Robert Marsh for communicating to us a representation-theoretic proof of the skew-symmetric version of Lemma~\ref{subd of aff}.  
Most of the work was carried out during the program on cluster algebras at MSRI in the Fall of 2012. We would like to thank the organizers of the program for invitation, and the Institute for hospitality and excellent research atmosphere. We would also like to thank the referees for valuable comments and suggestions.

\section{Cluster algebras and diagrams of  finite mutation type}
\label{bckgr}
 In this section, we recall the essential notions on cluster algebras of finite, affine, and finite mutation type.
For details see e.g~\cite{FZ2} and~\cite{FeSTu3}.

\subsection{Diagrams and mutations}
\label{dm}
A coefficient-free cluster algebra is completely defined by a skew-symmetrizable integer matrix. Following~\cite{FZ2}, we encode an $n\times n$ skew-symmetrizable integer matrix $B$ by a finite simplicial $1$-complex $\G$ with oriented weighted edges (called {\it arrows}), and call this complex a {\it diagram}. The weights of a diagram are positive integers.

Vertices of $\G$ are labeled by $[1,\dots,n]$. If $b_{ij}>0$, we join vertices $i$ and $j$ by an arrow directed from $i$ to $j$ and assign to it weight $-b_{ij}b_{ji}$. All such diagrams satisfy the following property: a product of weights along any chordless cycle of $\G$ should be a perfect square (cf.~\cite[Exercise~2.1]{Kac}).

Throughout the paper we assume that all diagrams are connected (equivalently, matrix $B$ is assumed to be indecomposable). 

\begin{remark}
We say that arrows labeled by $1$ are {\it simple} and omit the label on the diagrams.
The diagram is {\it simply-laced}  if it contains no non-simple arrows.

\end{remark}

Distinct matrices may have the same diagram. At the same time, it is easy to see that only finitely many matrices may correspond to the same diagram.
All the weights of a diagram of a skew-symmetric matrix are perfect squares. Conversely, if all the weights of a diagram $\G$ are perfect squares, then there exists a skew-symmetric matrix $B$ with diagram $\G$. 

For every vertex $k$ of a diagram $\G$ one can define an involutive operation  $\mu_k$ called {\it mutation of $\G$ in direction $k$}. This operation produces a new diagram  denoted by $\mu_k(\G)$ which can be obtained from $\G$ in the following way (see~\cite{FZ2}): 
\begin{itemize}
\item
orientations of all arrows incident to a vertex $k$ are reversed; 
\item
for every pair of vertices $(i,j)$ such that $\G$ contains arrows directed from $i$ to $k$ and from $k$ to $j$ the weight of the arrow joining $i$ and $j$ changes as described in Figure~\ref{quivermut}.
\end{itemize} 

\begin{figure}[!h]
\begin{center}
\psfrag{a}{\small $a$}
\psfrag{b}{\small $b$}
\psfrag{c}{\small $c$}
\psfrag{d}{\small $d$}
\psfrag{k}{\small $k$}
\psfrag{mu}{\small $\mu_k$}
\epsfig{file=./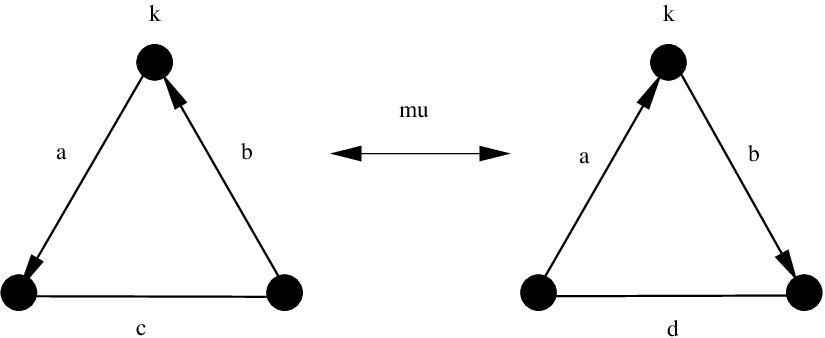,width=0.3\linewidth}\\
\medskip
$\pm\sqrt{c}\pm\sqrt{d}=\sqrt{ab}$
\caption{Mutations of diagrams. The sign before $\sqrt{c}$ (resp., $\sqrt{d}$) is positive if the three vertices form an oriented cycle, and negative otherwise. Either $c$ or $d$ may vanish. If $ab$ is equal to zero then neither value of $c$ nor orientation of the corresponding arrow does change.}
\label{quivermut}

\end{center}
\end{figure}

Given a diagram $\G$, its {\it mutation class} is the set of all diagrams obtained from
the given one by all sequences of iterated mutations. All diagrams from one mutation class are called {\it mutation-equivalent}.


\subsection{Finite type}

A diagram is of {\it finite type} if it is mutation-equivalent to an orientation of a Dynkin diagram.
So, a diagram of finite type is of one of the following mutation types: 
$A_n$, $B_n=C_n$, $D_n$, $E_6$, $E_7$, $E_8$, $F_4$ or $G_2$ (see the left column in Table~\ref{fin and aff}).

It is shown in~\cite{FZ2} that mutation classes of diagrams of finite type are in one-to-one correspondence
with cluster algebras of finite type. In particular, this implies that any subdiagram of a diagram of finite type is also of finite type.

\subsection{Affine type}
A diagram is of {\it affine type} if it is mutation-equivalent to an orientation of an affine Dynkin diagram different from an oriented cycle.
A diagram of affine type is of one of the following mutation types: 
$\widetilde A_{k,n-k}$, $0<k<n$ (see Remark~\ref{a}), $\widetilde B_n$, $\widetilde C_n$, $\widetilde D_n$, $\widetilde E_6$, $\widetilde E_7$, $\widetilde E_8$, $\widetilde  F_4$ 
or $\widetilde G_2$ (see the right column in Table~\ref{fin and aff}).

\begin{remark}
\label{a}
Let $\widetilde \G$ be an affine Dynkin diagram different from $\widetilde A_n$. Then all orientations of $\widetilde \G$ are mutation-equivalent. The orientations of  $\widetilde A_{n-1}$ split into $[n/2]$ mutation classes $\widetilde A_{k,n-k}$ (each class contains a cyclic representative with only two changes of orientations, as in Table~\ref{fin and aff}, with $k$ consecutive arrows in one direction and $n-k$ in the other, $0<k<n$).

\end{remark} 

We will heavily use the following statement. 
\begin{lemma}
\label{subd of aff}
Any subdiagram of a diagram of affine type is either of finite or of affine type.

\end{lemma}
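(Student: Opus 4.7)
The plan is to combine a locality argument with the classification of mutation-finite diagrams (Theorem~\ref{class}) and then exclude the two families that are neither finite nor affine.

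First I would prove the following locality principle. Let $\G'$ denote the full subdiagram of $\G$ on a vertex subset $I$, and let $u\in I$. Then $\mu_u(\G)$, restricted to $I$, coincides with $\mu_u(\G')$. This is immediate from the mutation rule: the only changes produced by $\mu_u$ are (i) the reversal of the arrows incident to $u$, and (ii) a change in weight on an arrow between $i,j$ whenever $i\to u\to j$ is a path in the diagram. Both prescriptions depend only on the arrows incident to $u$ within $I$ and are unaffected by vertices outside $I$. Consequently the mutation class of $\G'$ is obtained by restricting the mutation class of $\G$ to $I$, so mutation-finiteness of $\G$ implies mutation-finiteness of $\G'$.

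Next, apply this with $\G$ of affine type; then $\G'$ is mutation-finite, and by Theorem~\ref{class} it belongs to one of four families: finite type, affine type, surface or orbifold type, or one of the exceptional finite mutation types ($X_6,X_7,E_n^{(1,1)},G_2^{(*,\pm)},F_4^{(*,\pm)}$). The remaining task --- and this is where the real work lies --- is to exclude the last two possibilities. I would proceed class by class, using the list in Table~\ref{fin and aff} and the structural facts collected in Section~\ref{subd}. The exceptional types all have rank between $6$ and $9$ with rigid combinatorial shape, so for most affine mutation classes they can be ruled out already by comparing ranks, edge weights, and the position of multiple arrows. For surface and orbifold diagrams, one exploits their block decomposition: it should suffice to show that no diagram in an affine mutation class carries a nontrivial decomposition into surface blocks, which again reduces to a finite combinatorial check.

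The main obstacle is exactly this case analysis: one can easily imagine getting tangled in the larger mutation classes such as $\widetilde E_8$ or $\widetilde A_{k,n-k}$. A cleaner alternative --- consistent with the authors' acknowledgement of a representation-theoretic proof by Buan and Marsh in the skew-symmetric case --- would be to work with an admissible quasi-Cartan companion $A$ of the exchange matrix in the spirit of Seven and Barot--Geiss--Zelevinsky: for finite type $A$ is positive definite, for affine type $A$ is positive semi-definite with one-dimensional kernel, and both properties descend to principal submatrices. The lemma would then reduce to elementary linear algebra. Setting up admissibility compatibly with mutation in the non-skew-symmetric setting is, however, the genuinely delicate point and is presumably why the straightforward case analysis above is the approach actually taken.
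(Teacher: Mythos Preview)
Your locality argument for inheriting mutation-finiteness to subdiagrams is correct, and invoking Theorem~\ref{class} is a natural next step. The gap is in the exclusion of the surface/orbifold family. You propose to ``show that no diagram in an affine mutation class carries a nontrivial decomposition into surface blocks'', but this is false as stated: the affine types $\widetilde A_{p,q}$, $\widetilde B_n$, $\widetilde C_n$, $\widetilde D_n$ \emph{are} block-decomposable (see Table~\ref{surface realizations}). What is actually required is to show that whenever a subdiagram $\G'$ of an affine diagram arises from a surface or orbifold, that surface or orbifold is one of the specific ones in Table~\ref{surface realizations}. Since surfaces and orbifolds form an infinite family (varying genus, boundary components, punctures, orbifold points), this is not the ``finite combinatorial check'' you describe; for instance you would need an argument that no diagram in the mutation class of $\widetilde E_8$ contains an eight-vertex subdiagram coming from a positive-genus surface. (A smaller slip: the ranks of the non-affine exceptional types are not all between~$6$ and~$9$; $G_2^{(*,+)}$ and $G_2^{(*,*)}$ have rank~$4$, and $E_8^{(1,1)}$ has rank~$10$.)

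The paper does not carry out any such case analysis. It simply cites \cite[Theorem~1.1]{FeSThTu}, the growth-rate classification of cluster algebras: finite and affine types are exactly the mutation classes of polynomial growth, and since passing to a subdiagram cannot increase growth, the lemma follows at once. For the skew-symmetric case the paper also points to \cite{BMR} and~\cite{Z}. Your quasi-Cartan alternative is in fact the cleaner self-contained route you are reaching for: positive semidefiniteness of corank at most one is inherited by principal submatrices, and the compatibility of admissible quasi-Cartan companions with mutation in the skew-symmetrizable setting is available in the literature (Barot--Geiss--Zelevinsky, Seven), so this step is less delicate than you suggest. Either of these is far shorter than attempting to complete the exclusion argument you outline.
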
 
In skew-symmetric case Lemma~\ref{subd of aff} can be derived from the results of~\cite{BMR} and~\cite{Z}. In general case Lemma~\ref{subd of aff} immediately follows from~\cite[Theorem~1.1]{FeSThTu}.

\begin{table}[!h]
\caption{Diagrams of finite and affine type}
\label{fin and aff}
\begin{center}
\begin{tabular}{|c|c|c|c|}
\hline
\multicolumn{2}{|c|}{Finite types}&\multicolumn{2}{|c|}{Affine types}\\
\hline
\raisebox{4mm}{$A_n$, $n\ge 1$} &\raisebox{3mm}{\epsfig{file=./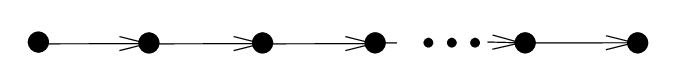,width=0.25\linewidth}} &
\raisebox{4mm}{$\widetilde A_{k,n-k}$, $n>k\ge 1$} & \raisebox{0mm}{\epsfig{file=./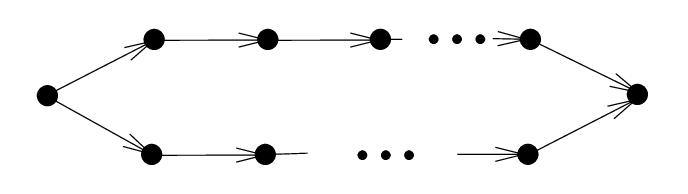,width=0.25\linewidth}}\\
\hline
\raisebox{1mm}{$B_n=C_n$, $n\ge 2$} &\psfrag{2}{\scriptsize $2$}\epsfig{file=./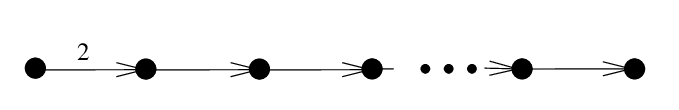,width=0.25\linewidth} &
\raisebox{5mm}{$\widetilde  B_n$, $n\ge 3$} & \psfrag{2}{\scriptsize $2$}\epsfig{file=./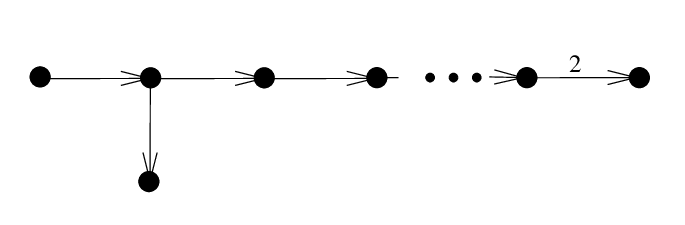,width=0.25\linewidth}\\
\hhline{|~|~|-|-|}
&&\raisebox{2mm}{$\widetilde C_n$, $n\ge 2$} &
 \psfrag{2}{\scriptsize $2$}\epsfig{file=./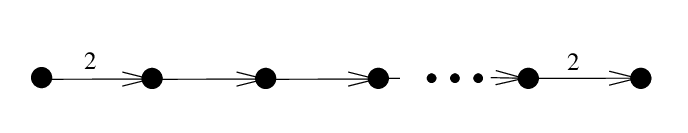,width=0.25\linewidth}\\
\hline
\raisebox{5mm}{$D_n$, $n\ge 4$}&\epsfig{file=./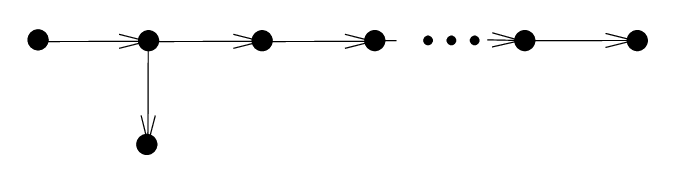,width=0.25\linewidth} &
\raisebox{5mm}{$\widetilde D_n$, $n\ge 4$}& \epsfig{file=./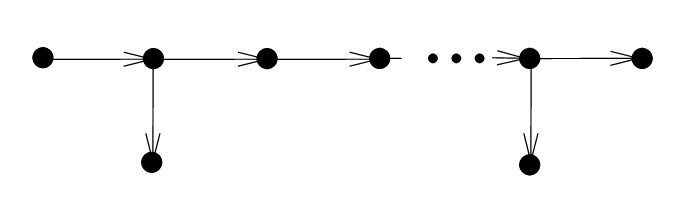,width=0.25\linewidth}\\
\hline
\raisebox{7mm}{$E_6$}& \raisebox{4mm}{\epsfig{file=./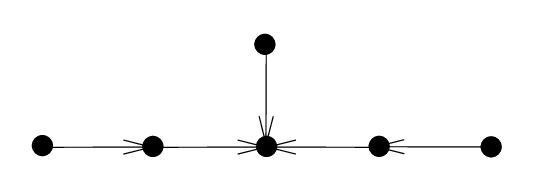,width=0.2\linewidth}}&
\raisebox{7mm}{$\widetilde E_6$}& \epsfig{file=./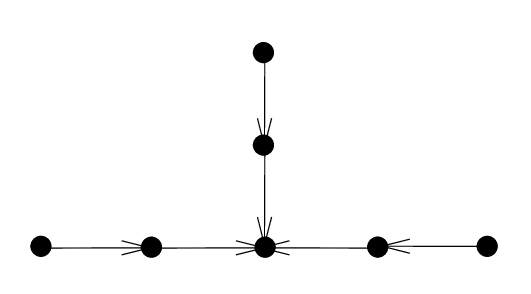,width=0.2\linewidth}\\
\hline
\raisebox{2mm}{$E_7$}& \epsfig{file=./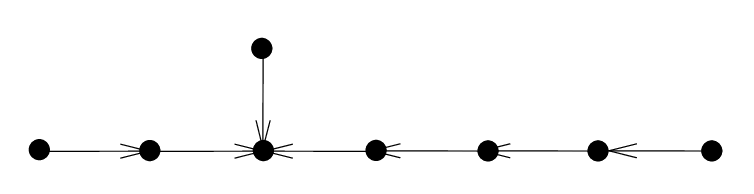,width=0.25\linewidth}&
\raisebox{2mm}{$\widetilde E_7$}&\epsfig{file=./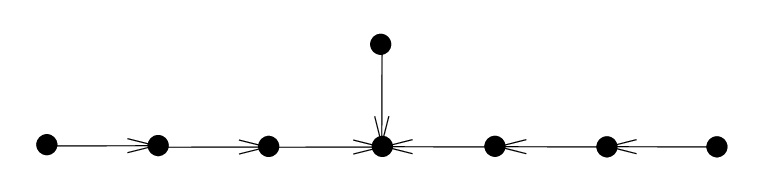,width=0.25\linewidth}\\
\hline
\raisebox{2mm}{$E_8$}& \epsfig{file=./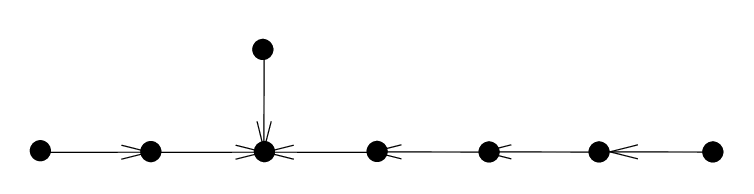,width=0.25\linewidth}&
\raisebox{2mm}{$\widetilde E_8$}&\epsfig{file=./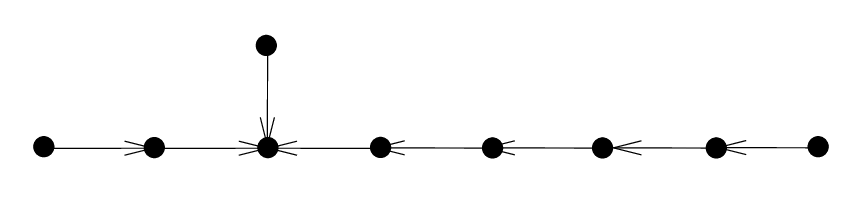,width=0.25\linewidth}\\
\hline
\raisebox{2mm}{$F_4$}&\psfrag{2}{\scriptsize $2$}\epsfig{file=./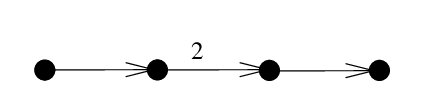,width=0.15\linewidth} &
\raisebox{2mm}{$\widetilde F_4$}&\psfrag{2}{\scriptsize $2$}\epsfig{file=./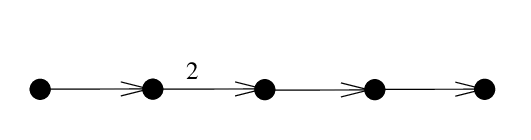,width=0.20\linewidth}\\
\hline
\raisebox{2mm}{$G_2$}& \psfrag{3}{\scriptsize $3$}\epsfig{file=./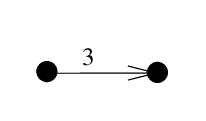,width=0.08\linewidth}&
\raisebox{2mm}{$\widetilde G_2$}&\psfrag{3}{\scriptsize $3$}\raisebox{1mm}{\epsfig{file=./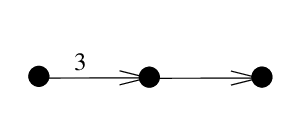,width=0.10\linewidth}}\\
\hline
\end{tabular}
\end{center}
\end{table}

\subsection{Finite mutation type }

A diagram  is called {\it mutation-finite} (or {\it of finite mutation type}) if its mutation class is finite.

The following criterion for a diagram to be mutation-finite is well-known (see e.g.~\cite[Theorem 2.8]{FeSTu2}).

\begin{prop}
A diagram $\G$ of order at least $3$ is mutation-finite if and only if any diagram in the
mutation class of $\G$ contains no arrows of weight greater than $4$.

\end{prop}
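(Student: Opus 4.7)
The plan is to handle the two directions of the equivalence separately. For the easy direction, suppose every diagram in the mutation class of $\G$ has edge weights bounded by $4$. An edge of weight $w$ in a diagram $\G'$ corresponds to skew-symmetrizable matrix entries $b'_{ij}, b'_{ji}$ with $|b'_{ij}||b'_{ji}| = w$, so the hypothesis forces $|b'_{ij}| \le 4$ for every entry of every matrix in the class. Since there are only finitely many $n \times n$ integer matrices with entries in $\{-4,\dots,4\}$, and each matrix determines its diagram uniquely, the diagram mutation class is finite.

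For the converse I would argue by contrapositive: suppose some $\G'$ in the mutation class contains an arrow of weight $w>4$, and show the class is infinite. The key tool is the locality of mutation: for any three-element subset $S \subset \{1,\dots,n\}$, mutation of the full diagram at a vertex $k \in S$ alters the induced subdiagram on $S$ exactly as the rank-$3$ mutation does (the new weights among vertices of $S$ depend only on the old weights among vertices of $S$). Consequently, if some induced rank-$3$ subdiagram has infinite mutation class, then so does $\G$. Since $\G'$ has order at least $3$, I can select a vertex $v_3$ adjacent in $\G'$ to an endpoint $v_1$ of the heavy arrow (working inside the connected component containing the heavy arrow); the induced subdiagram on $\{v_1, v_2, v_3\}$ is then a connected rank-$3$ diagram carrying an edge of weight greater than $4$.

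The main obstacle is thus the rank-$3$ statement: any connected rank-$3$ diagram with an edge of weight exceeding $4$ has infinite mutation class. I would prove this by iterating the mutation formula $\pm\sqrt{c}\pm\sqrt{d}=\sqrt{ab}$ from Figure~\ref{quivermut}. Case-splitting on whether the subdiagram is a path (one triangle edge having weight $0$) or an oriented triangle (all three weights nonzero, with product a perfect square), I would construct an explicit sequence of mutations producing a strictly increasing sequence of edge weights; in the path case mutation immediately creates a new edge of weight $ab>4$, after which the diagram becomes a genuine triangle and the weights continue to grow. The subtle point, and where I expect the main work to lie, is ensuring that the sign choices in the mutation formula genuinely yield growth rather than cancellation at each step; this can be controlled by tracking how orientations evolve under successive mutations, so that one can always choose to mutate at a vertex where the triangle inequality of square roots pushes the opposite weight upward. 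Once an unbounded sequence of weights is established in the rank-$3$ mutation class, the corresponding rank-$3$ diagrams are pairwise distinct, and this infinitude pulls back to $\G$.
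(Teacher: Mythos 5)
First, a remark on the comparison you asked for: the paper does not prove this proposition at all --- it is stated as ``well-known'' with a pointer to \cite[Theorem~2.8]{FeSTu2} --- so there is no internal proof to measure your argument against. On its own terms, your architecture is the standard one and is sound: the forward direction is pure counting (indeed you can skip the matrices entirely --- there are only finitely many weighted oriented graphs on $n$ labelled vertices with weights in $\{1,2,3,4\}$), and the converse correctly reduces to rank $3$ via the fact that mutation at a vertex of $S$ commutes with restriction to $S$, so that an infinite mutation class of an induced subdiagram forces an infinite class for $\G$. One caveat: connectedness is silently required both by you and by the statement itself. If the arrow of weight $w>4$ forms its own two-vertex component, mutation never changes its weight, the class is finite, and there is no adjacent $v_3$ to choose; so the proposition implicitly assumes $\G$ connected (and since mutation preserves connectedness, every $\G'$ in the class is then connected and your choice of $v_3$ is legitimate).

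The genuine gap is that the rank-$3$ lemma --- the entire mathematical content of the converse --- is left as a plan: you say you ``would construct'' a growing sequence and that the sign bookkeeping is ``where the main work lies,'' but you never exhibit the inequality that makes the weights grow, and your case split also omits the non-oriented triangle. The argument does close, as follows. A connected path reduces, after at most two mutations (first making the middle vertex neither source nor sink, then mutating there), to an oriented triangle with weights $a,b,ab$ and $\max>4$; a non-oriented triangle, mutated at its middle vertex, becomes oriented with third weight $(\sqrt{ab}+\sqrt{c})^2\ge c$, again with $\max>4$. So one may assume an oriented triangle with weights $p\ge q\ge r>0$ and $p>4$. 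Mutating at the vertex opposite the edge of weight $r$ gives the new weight $r'$ with $\sqrt{r'}=\sqrt{pq}-\sqrt{r}$, and since $\sqrt{p}>2$ one has $\sqrt{pq}>2\sqrt{q}\ge\sqrt{q}+\sqrt{r}$, hence $\sqrt{r'}>\sqrt{q}$; moreover the identity $\sqrt{r}+\sqrt{r'}=\sqrt{pq}$ with $\sqrt{r'}>0$ shows the new triangle is again oriented. Thus each step replaces the minimal weight by something strictly larger than the middle weight while preserving orientedness, the bound $\max>4$, and strictly increasing the sum of weights; the resulting diagrams are pairwise distinct and the class is infinite. With this estimate inserted (and the connectedness convention made explicit), your proof is complete.
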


Mutation-finite diagrams of order at least $3$ containing no arrows of weight $2$ and $3$ will be called {\it skew-symmetric}
(as for any of them there is the corresponding skew-symmetric matrix).

As it is shown in~\cite{FeSTu1},~\cite{FeSTu2} and~\cite{FeSTu3}, a diagram of finite mutation type
either has only two vertices, or corresponds to a triangulated surface or orbifold (see Section~\ref{triang}), or belongs to one of finitely many exceptional mutation classes.

\begin{theorem}[\cite{FeSTu1,FeSTu2,FeSTu3}]
\label{class}
Let $\G$ be a mutation-finite diagram with at least $3$ vertices. Then either $\G$ arises from a triangulated surface or orbifold, or $\G$ is mutation-equivalent to one of $18$ exceptional diagrams
$E_6,E_7,E_8, \widetilde E_6,\widetilde E_7,\widetilde E_8,E_6^{(1,1)}\!,E_7^{(1,1)}\!,E_8^{(1,1)}\!,X_6,X_7,
\widetilde G_2,G_2^{(*,+)}\!,G_2^{(*,*)}\!, F_4, \widetilde F_4, F_4^{(*,+)}\!$ or  $F_4^{(*,*)}$
shown in Fig.~\ref{exceptional}.

\end{theorem}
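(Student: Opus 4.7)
\emph{Proof plan.} The plan is to split the problem into two regimes: small-rank diagrams, which we classify by exhaustive enumeration, and large-rank diagrams, which we force to admit a ``puzzle piece'' structure coming from a triangulated surface or orbifold. Throughout, we exploit two observations: every subdiagram of a mutation-finite diagram is again mutation-finite (mutations of a subdiagram lift to the ambient diagram), and by the previous proposition every weight encountered in the mutation class is at most $4$. These two facts give strong local control.

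The first concrete step is to classify all mutation-finite diagrams on $3$ vertices. The mutation rule $\pm\sqrt{c}\pm\sqrt{d}=\sqrt{ab}$ together with the weight bound $\le 4$ leaves only finitely many admissible weight triples $(a,b,c)$, and a short case analysis singles out the acyclic triangles and the handful of admissible cyclic triangles. Using these as building blocks, one then enumerates mutation-finite diagrams of rank $4,5,\dots,n_0$ for a fixed threshold $n_0$ (in practice $n_0=7$ is enough) by a finite computer search, and matches the output against the explicit list of $18$ exceptional mutation classes shown in Fig.~\ref{exceptional}. The skew-symmetrizable case is handled by the same method, but requires tracking the symmetrizing vector as well.

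For diagrams of rank larger than $n_0$, I would introduce the notion of a \emph{block decomposition}: a finite list of ``blocks'' (small puzzle pieces with distinguished ``outlets'') whose gluings along outlets produce precisely the diagrams of triangulated surfaces, augmented by orbifold-type blocks in the skew-symmetrizable setting. The goal is then to show that every mutation-finite diagram of rank $>n_0$ admits such a decomposition. The argument is inductive on rank: take a vertex $v$, analyze its star using the small-rank classification, and show that the admissible local pictures around $v$ are exactly the intersections of blocks at an outlet; then show that these local matches can be assembled into a globally consistent block decomposition. In the converse direction, every diagram built from a triangulation of a fixed bordered surface or orbifold is mutation-finite because mutations correspond to flips of triangulations and there are only finitely many triangulations of a fixed compact surface with a fixed marked-point set.

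The hard part is the gluing step in the large-rank case: while it is not difficult to show that each individual vertex locally looks like part of a block, proving that these local patches fit together globally -- and that no ``non-block'' mutation-finite pattern can appear beyond the finite exceptional list -- requires a rigidity argument controlling how blocks propagate along chordless cycles and how weights transport under mutation. This is where the small-rank enumeration must interact cleanly with the combinatorics of triangulations, and where the skew-symmetrizable case demands an extra layer of bookkeeping to detect orbifold points.
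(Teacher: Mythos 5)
First, note that the paper does not prove Theorem~\ref{class} at all: it is imported from \cite{FeSTu1,FeSTu2,FeSTu3}, so there is no internal proof to compare your attempt against. Your outline does resemble the strategy of those papers at a high level (exhaustive enumeration in bounded rank plus a structural argument for large rank), but as a proof it has a genuine gap precisely where you flag ``the hard part''. You propose to show that every mutation-finite diagram of large order admits a block decomposition by matching local pictures around each vertex and then assembling them globally; that assembly step is asserted, not argued, and it is the entire content of the theorem. The argument of \cite{FeSTu1} runs in the opposite direction: one proves that a \emph{non-decomposable} mutation-finite diagram of sufficiently large order $n$ must contain a non-decomposable mutation-finite subdiagram of order $n-1$, so that non-decomposability propagates \emph{downward}; combined with a (computer-assisted) verification that all non-decomposable mutation-finite diagrams of bounded order lie in the exceptional list, this closes the induction without ever having to glue local block structures into a global one. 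Your threshold $n_0=7$ is also too low even to see all the exceptional classes: $\widetilde E_8$ has $9$ vertices and $E_8^{(1,1)}$ has $10$, so the bounded-rank enumeration must go at least to order $10$ before the reduction can take over.

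Second, the skew-symmetrizable case is not a matter of ``tracking the symmetrizing vector'': in \cite{FeSTu2,FeSTu3} it is handled by the technique of unfoldings, replacing a skew-symmetrizable matrix by a larger skew-symmetric one equipped with a group action so that the skew-symmetric classification can be applied, and the orbifold blocks of Table~\ref{blocks} emerge from that analysis rather than from a direct local argument. Without either the downward-induction lemma or a worked-out rigidity argument for your gluing step, the proposal is a plausible road map for the cited results but not a proof.
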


\begin{table}[!h]
\caption{Exceptional mutation classes}
\label{exceptional}
\begin{center}
\psfrag{a}{\small $a$}
\begin{tabular}{|c|}
\hline
\\
\underline{\large Skew-symmetric diagrams}:\\
\\
\psfrag{1}{$E_6$}
\psfrag{2}{$E_7$}
\psfrag{3}{$E_8$}
\psfrag{1_}{$\widetilde E_6$}
\psfrag{2_}{$\widetilde E_7$}
\psfrag{3_}{$\widetilde E_8$}
\psfrag{1__}{$E_6^{(1,1)}$}
\psfrag{2__}{$E_7^{(1,1)}$}
\psfrag{3__}{$E_8^{(1,1)}$}
\psfrag{4}{$4$}
\psfrag{4-}{$X_6$}
\psfrag{5-}{$X_7$}
\epsfig{file=./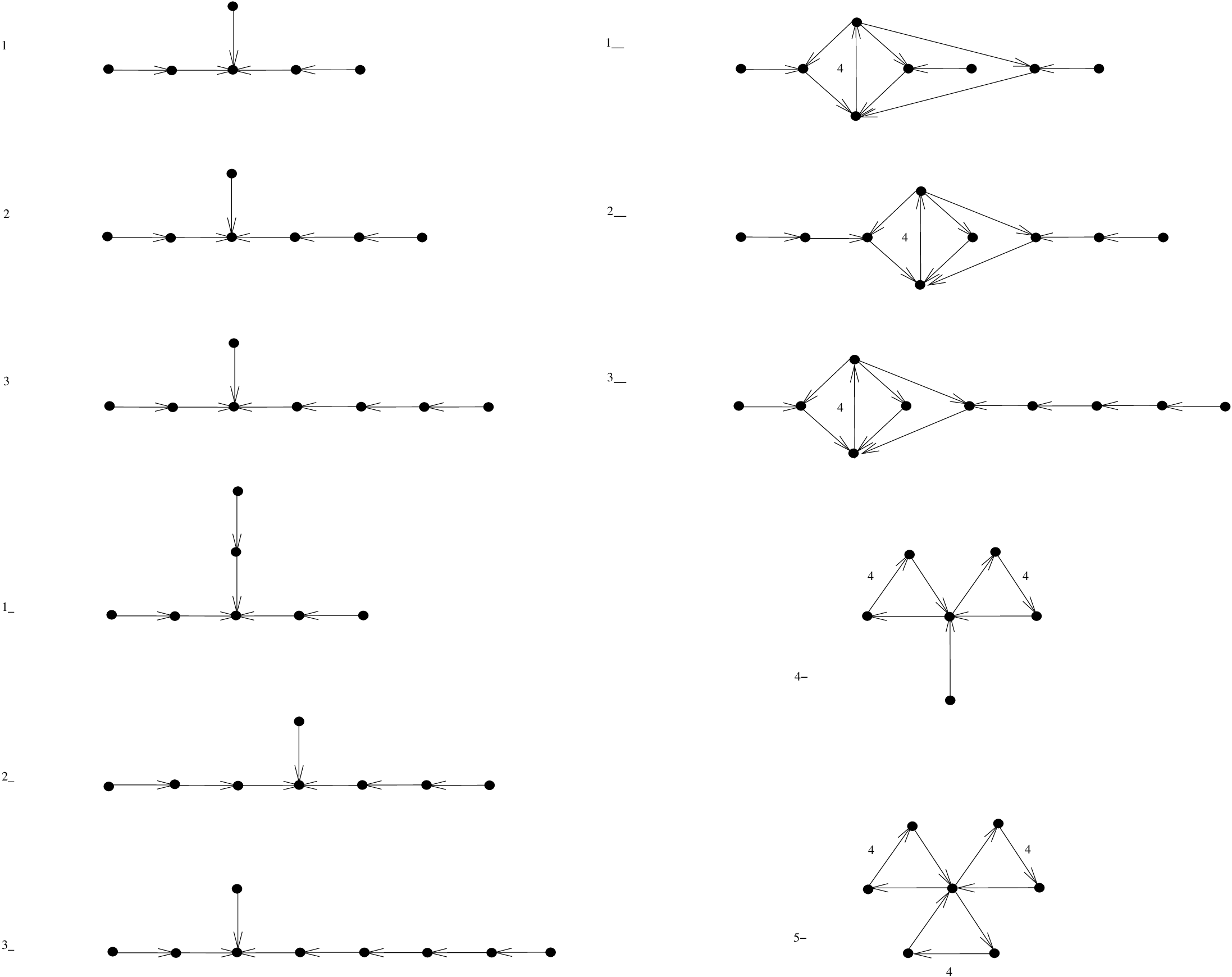,width=0.89\linewidth}\\
\\
\hline
\\
\underline{\large Non-skew-symmetric diagrams}:\\
\\
\psfrag{2}{\scriptsize $2$}
\psfrag{2-}{\scriptsize $2$}
\psfrag{3}{\scriptsize $3$}
\psfrag{4}{\scriptsize $4$}
\psfrag{G}{$\t G_2$}
\psfrag{F}{$F_4$}
\psfrag{Ft}{$\t F_4$}
\psfrag{W}{$G_2^{(*,*)}$}
\psfrag{V}{$G_2^{(*,+)}$}
\psfrag{Y6}{$F_4^{(*,+)}$}
\psfrag{Z}{$F_4^{(*,*)}$}
\epsfig{file=./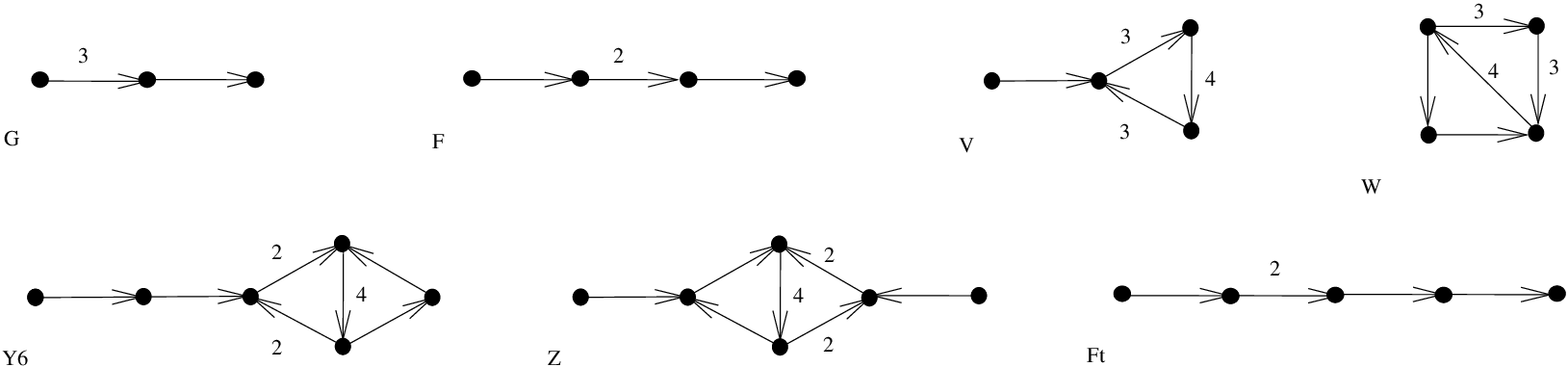,width=0.89\linewidth}\\
\hline
\end{tabular}
\end{center}
\end{table}

\subsection{Triangulated surfaces/orbifolds  and block-decomposable diagrams}
\label{triang}

The correspondence between diagrams of finite mutation type and triangulated surfaces (or orbifolds with orbifold points of order $2$) is developed in~\cite{FST} and~\cite{FeSTu3}. Here we briefly remind the basic definitions.

By a {\it surface} we mean a genus $g$ orientable surface with $r$ boundary components and a finite set of marked points, with at least one marked point at each boundary component. A non-boundary marked point is called a {\it puncture}. By an {\it orbifold} we mean a surface with a distinguished finite set of interior points called {\it orbifold points of order $2$}.

An (ideal) {\it triangulation} of a surface is a triangulation with vertices of triangles in the marked points. We allow self-folded triangles and  follow~\cite{FST} considering triangulations as {\it tagged triangulations} (however, we are neither reproducing nor using all the details in this paper).

An (ideal) {\it triangulation} of an orbifold is constructed similarly to a triangulation of a surface,
but it also includes ``orbifold triangles'' (see Table~\ref{blocks}).
In these triangles a cross stays for an orbifold point. An edge of the triangulation incident to an orbifold point 
is called a {\it pending edge}, it is drawn bold and is 
thought as a round-trip from a ordinary marked point to the orbifold point and back. 

Given a triangulated surface or orbifold, one constructs a diagram in the following way:
\begin{itemize}
\item vertices of the diagram correspond to the (non-boundary) edges of a triangulation;
\item two vertices are connected by a simple arrow if they correspond to two sides of the same triangle (i.e., there is one simple arrow between given two vertices for every such triangle);
inside the triangle orientations of the arrow are arranged counter-clockwise (with respect to some orientation of the surface);
\item two simple arrows with different directions connecting the same vertices cancel out;
two simple arrows in the same direction add to an arrow of weight $4$;
\item an arrow between vertices corresponding to a pending edge and an ordinary edge of a triangle has weight $2$;
an arrow between two vertices corresponding to two pending edges has weight $4$.
\item for a self-folded triangle (with two sides identified), two vertices corresponding to the sides of this triangle are disjoint;
a vertex corresponding to the ``inner'' side of the triangle is connected to other vertices in the same way as the vertex corresponding to the outer side of the triangle.

\end{itemize}

It is easy to see that any surface (or orbifold) can be cut into {\it elementary surfaces/orbifolds}, we list them (and their diagrams) in Table~\ref{blocks}.  
We use {\it white} color for the vertices corresponding to the ``exterior'' edges of these elementary surfaces and {\it black} for the vertices corresponding to ``interior'' edges.

The diagrams in  Table~\ref{blocks} are called {\it blocks}.
We will say that blocks listed on the left are {\it skew-symmetric} ones, while the ones on the right are {\it non-skew-symmetric}. Depending on a block, we call it {\it a block of type} ${\rm{I}}$, ${\rm{II}}$ etc. (see the left column of Table~\ref{blocks}).

As elementary surfaces and orbifolds are glued to each other to form a triangulated surface or orbifold, the blocks are glued to form a {\it block-decomposition} of a bigger diagram. 
A connected diagram $\G$ is called {\it block-decomposable} (or simply, {\it decomposable})
if it can be obtained from a collection of blocks by identifying white vertices of different blocks along some partial matching (matching of vertices of the same block is not allowed), where two simple arrows with same endpoints and opposite directions cancel out, and two simple arrows with same endpoints and same directions form an arrow of weight $4$. A non-connected diagram $\G$ is called  block-decomposable if every connected component of $\G$ is either decomposable or a single vertex. If a diagram $\G$ is not block-decomposable then we call $\G$ {\it non-decomposable}.

\begin{table}[!h]
\begin{center}
\caption{Elementary surfaces/orbifolds and corresponding blocks}
\label{blocks}
\begin{tabular}{cc}
\begin{tabular}{|c|c|c|}
\hline
Type& \phantom{x} Diagram \phantom{x} & Surface\\
\hline 
\raisebox{9mm}{${\rm{I}}$}& \raisebox{8.5mm}{\epsfig{file=./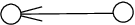,width=0.1\linewidth}}& \epsfig{file=./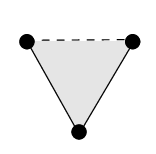,width=0.15\linewidth}\\
\hline
\raisebox{9mm}{${\rm{II}}$}& \raisebox{3mm}{\epsfig{file=./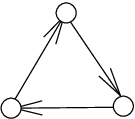,width=0.1\linewidth}} & \epsfig{file=./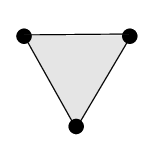,width=0.15\linewidth}\\
\hline
\raisebox{9mm}{${\rm{IIIa}}$}&  \raisebox{5mm}{\epsfig{file=./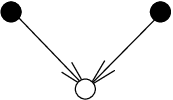,width=0.1\linewidth}}& \epsfig{file=./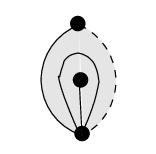,width=0.15\linewidth}\\
\hline
\raisebox{9mm}{${\rm{IIIb}}$}&  \raisebox{5mm}{\epsfig{file=./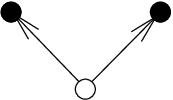,width=0.1\linewidth}}& \epsfig{file=./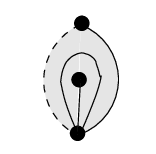,width=0.15\linewidth}\\
\hline
\raisebox{9mm}{${\rm{IV}}$}&  \raisebox{1mm}{\epsfig{file=./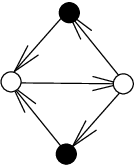,width=0.1\linewidth}}& \epsfig{file=./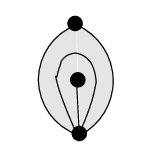,width=0.15\linewidth}\\
\hline
\raisebox{9mm}{${\rm{V}}$}&  \raisebox{3mm}{\epsfig{file=./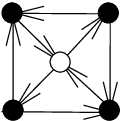,width=0.1\linewidth}}& \epsfig{file=./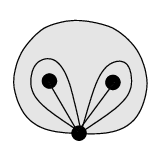,width=0.15\linewidth}\\
\hline
\end{tabular}
&
\begin{tabular}{|c|c|c|}
\hline
Type& \phantom{x} Diagram \phantom{x}  & Orbifold\\
\hline 
\raisebox{9mm}{${\widetilde{ \rm{III}}a}$}&  \raisebox{9mm}{\psfrag{2}{\scriptsize $2$}\epsfig{file=./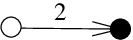,width=0.1\linewidth}}& \epsfig{file=./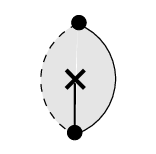,width=0.15\linewidth}\\
\hline
\raisebox{9mm}{${\widetilde{ \rm{III}}b}$}&   \raisebox{9mm}{\psfrag{2}{\scriptsize $2$}\epsfig{file=./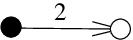,width=0.1\linewidth}}&\epsfig{file=./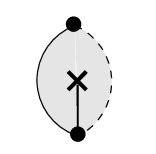,width=0.15\linewidth}\\
\hline
\raisebox{9mm}{${\widetilde{\rm{IV}}}$}&   \raisebox{5mm}{\psfrag{2}{\scriptsize $2$}\epsfig{file=./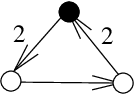,width=0.1\linewidth}}&\epsfig{file=./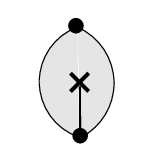,width=0.15\linewidth}\\
\hline
\raisebox{9mm}{${\widetilde{\rm{V}}_1}$}&  \raisebox{1mm}{\psfrag{2}{\scriptsize $2$}\epsfig{file=./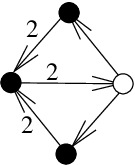,width=0.1\linewidth}} &\epsfig{file=./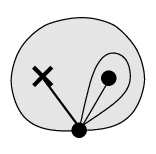,width=0.15\linewidth}\\
\hline
\raisebox{9mm}{${\widetilde{\rm{V}}_2}$}&  \raisebox{1mm}{\psfrag{2}{\scriptsize $2$}\epsfig{file=./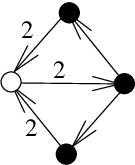,width=0.1\linewidth}} &\epsfig{file=./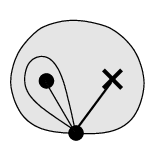,width=0.15\linewidth}\\
\hline
\raisebox{9mm}{${\widetilde{\rm{V}}_{12}}$}&   \raisebox{5mm}{\psfrag{2}{\scriptsize $2$} \psfrag{4}{\scriptsize $4$}\epsfig{file=./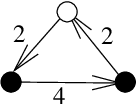,width=0.1\linewidth}}&\epsfig{file=./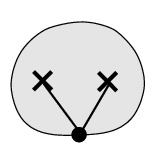,width=0.15\linewidth}\\
\hline
\end{tabular}
\\
\\
\end{tabular}
\end{center}
\end{table}

\begin{remark}
There are also several exceptional blocks which have no white vertices and are used only to represent some triangulations of
small exceptional orbifolds, namely, sphere with four marked points (some of which are punctures and some are orbifold points). See~\cite[Table~3.2]{FeSTu3} for the list. 

\end{remark}

Block-decomposable diagrams are in one-to-one correspondence with adjacency matrices of arcs of ideal (tagged) triangulations of bordered two-dimensional surfaces and orbifolds with marked points (see~\cite[Section~13]{FST} and~\cite{FeSTu3} for the detailed explanations). Mutations of block-decomposable diagrams correspond to flips of (tagged) triangulations. In particular, this implies that mutation class of any block-decomposable diagram is finite, and any subdiagram of a block-decomposable diagram is block-decomposable too.

Theorem~\ref{class} shows that block-decomposable diagrams almost exhaust mutation-finite ones. In skew-symmetric case this implies the following easy corollary:

\begin{prop}[\cite{FeSTu1},Theorem~5.11]
\label{small}
Any skew-symmetric mutation-finite diagram of order less than $6$ is block-decomposable.
\end{prop}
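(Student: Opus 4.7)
The plan is to deduce the statement directly from the classification Theorem~\ref{class} together with an inspection of Table~\ref{exceptional}; no substantive new argument is required, and the only real input is the classification itself, which is imported as a black box from~\cite{FeSTu1,FeSTu2,FeSTu3}.

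For $3\le n\le 5$, Theorem~\ref{class} gives a dichotomy: either $\G$ arises from a triangulated (unpunctured or punctured) surface or orbifold --- in which case it is block-decomposable by the correspondence recalled in Section~\ref{triang} --- or $\G$ is mutation-equivalent to one of the eighteen exceptional diagrams displayed in Table~\ref{exceptional}. Among those, the skew-symmetric classes are exactly $E_6,E_7,E_8,\widetilde E_6,\widetilde E_7,\widetilde E_8,E_6^{(1,1)},E_7^{(1,1)},E_8^{(1,1)},X_6,X_7$; the remaining classes ($\widetilde G_2$, $F_4$, $\widetilde F_4$, $G_2^{(*,+)}$, $G_2^{(*,*)}$, $F_4^{(*,+)}$, $F_4^{(*,*)}$) contain arrows of weight $2$ or $3$ and are therefore non-skew-symmetric. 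Every skew-symmetric exceptional class has at least six vertices (with $E_6$ and $X_6$ attaining this minimum); since the number of vertices is a mutation invariant, $\G$ cannot lie in any of these classes when $n<6$. The surface/orbifold alternative must therefore hold, yielding block-decomposability.

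The low-dimensional cases $n\le 2$, which lie outside the scope of Theorem~\ref{class}, are handled by direct inspection. A single vertex is block-decomposable by the convention in the definition for non-connected diagrams; on two vertices a skew-symmetric diagram has no edge (two disjoint single-vertex components), a simple arrow (a block of type~$\mathrm{I}$), or a weight-$4$ arrow, which is realized by identifying both pairs of white vertices of two type-$\mathrm{I}$ blocks so that the two simple arrows have matching orientation and add up to weight $4$. There is no serious obstacle in any of this; the entire content of the proposition is the combinatorial observation that the finite list of skew-symmetric exceptional mutation classes happens to contain no diagram of order smaller than six.
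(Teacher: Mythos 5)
Your derivation for orders $3$ to $5$ is exactly the paper's. Proposition~\ref{small} is introduced with the sentence ``Theorem~\ref{class} shows that block-decomposable diagrams almost exhaust mutation-finite ones; in skew-symmetric case this implies the following easy corollary,'' and the entire content of that implication is your observation that every skew-symmetric exceptional class in Table~\ref{exceptional} ($E_6,\dots,E_8^{(1,1)},X_6,X_7$) has at least six vertices, while vertex count is a mutation invariant. The attached citation to~\cite{FeSTu1} points to where the statement is proved from scratch; there it is an ingredient of the classification rather than a consequence of it, but within the logical architecture of this paper, where Theorem~\ref{class} is imported as a black box, your reading is the intended one and involves no circularity.

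The one flaw is your treatment of order $\le 2$. The claim that a skew-symmetric diagram on two vertices carries either no arrow, a simple arrow, or a weight-$4$ arrow is false: a single arrow of weight $k^2$ with $k\ge 3$ is skew-symmetric, and it is mutation-finite, since mutating at either vertex merely reverses it. The weight-$\le 4$ criterion for mutation-finiteness quoted in Section~\ref{bckgr} explicitly requires order at least $3$ and does not apply here. Such a diagram is not block-decomposable, so the proposition read literally fails in order $2$. The resolution is that the statement --- like Theorem~\ref{class}, like the original Theorem~5.11 of~\cite{FeSTu1}, and like every application of Proposition~\ref{small} in this paper (oriented cycles, risk diagrams of order $5$ or $6$, etc.) --- is meant for connected diagrams of order at least $3$. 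Your cases $n\le 2$ lie outside the intended scope and should be dropped rather than argued; as written, that paragraph asserts something incorrect.
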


We will use the surface and orbifold presentations of block-decomposable diagrams of finite and affine type, see Table~\ref{surface realizations}.

\begin{table}[!h]
\begin{center}
\caption{Surfaces and orbifolds corresponding to block-decomposable diagrams of finite and affine type}
\label{surface realizations}
\begin{tabular}{|l|l|}
\hline
\vphantom{$\int^0$}$A_n$ & disk\\
\vphantom{$\int^0$}$B_n=C_n$ & disk with an orbifold point\\
\vphantom{$\int^0_0$}$D_n$ & disk with a puncture\\
\hline 
\vphantom{$\int\limits^.$}$\widetilde A_{n}$& annulus\\
\vphantom{$\int^0$}$\widetilde B_n$&  disk with a puncture and an orbifold point\\
\vphantom{$\int^0$}$\widetilde C_n$&  disk with two orbifold points\\
\vphantom{$\int^0_0$}$\widetilde D_n$&  disk with two punctures\\
\hline
\end{tabular}
\end{center}
\end{table}

\begin{remark}
A mutation class $\widetilde A_{k,n-k}$ (of affine type $\widetilde A_{n-1}$) corresponds to an annulus with $k$ marked points on one boundary component and $n-k$ on the other.

\end{remark}

\section{Subdiagrams of mutation-finite diagrams}
\label{subd}

In this section, we list some technical facts we are going to use in the sequel.

\subsection{Double arrows in diagrams of mutation classes $\widetilde A_n$, $\widetilde B_n$, $\widetilde C_n$ and $\widetilde D_n$ }
By a {\it double arrow} we mean an arrow labeled by $4$ (the origin of this notation is in the presentation of skew-symmetric diagrams by quivers).
A double arrow in a decomposable diagram may arise in two ways: either it is contained in the block ${\widetilde{ \rm{V}}_{12}}$ or it is glued of two simple arrows from two blocks.
Since the blocks correspond to some pieces of a surface/orbifold, there are restrictions on some arrangements of blocks in block decompositions of diagrams of a given mutation type.
\begin{itemize}
\item Block of type ${\rm{IV}}$, as well as a combination of blocks of type $\widetilde{\rm{IV}}$ or ${\rm{IV}}$ with a block of type ${\rm{I}}$ or ${\rm{II}}$ leading to a double arrow, results in a puncture on the corresponding surface/orbifold, so all these do not appear in diagrams of type $\widetilde A_n$ and $\widetilde C_n$. 
\item Gluing of two blocks of types ${\rm{I}}$ or ${\rm{II}}$ leading to a double arrow results in an annulus with one marked point at each boundary component. There is no way to glue any blocks to this annulus to obtain a closed disk with at most two punctures or orbifold points in total. Thus, these do not appear in diagrams of type $\widetilde B_n$, $\widetilde C_n$ and $\widetilde D_n$. 
\item Gluing of two blocks of types $\widetilde{\rm{IV}}$ or ${\rm{IV}}$ results in a closed sphere with punctures and/or orbifold points, so these do not appear in affine diagrams.

\end{itemize}

Based on the restrictions above, we list all possible ways to get a double arrow inside decomposable affine diagrams in Table~\ref{double edges}.
Taking into account the fact that both block of type I and block of type II correspond to a triangle on a surface/orbifold (with only difference that for the former the triangle has a boundary arc),
we also write a {\it reduced list} of the possibilities, where we exclude blocks of type I.

\begin{table}[!h]
\begin{center}
\caption{Possibilities for double arrows in decomposable affine diagrams}
\label{double edges}
\begin{tabular}{|l|l|l|}
\hline
 type& block decompositions& reduced list of decompositions\\
\hline
\vphantom{$\int\limits^.$}$\widetilde A_n$&  I+II, II+II& II+II\\
\hline
\vphantom{$\int\limits^.$}\raisebox{-0mm}{$\widetilde B_n$}&   I+$\widetilde{\rm{IV}}$, II+$\widetilde{\rm{IV}}$&  II+$\widetilde{\rm{IV}}$\\
\hline
\vphantom{$\int\limits^.$}\raisebox{-0mm}{$\widetilde C_n$}&   $\widetilde{\rm{V}}_{12}$& $\widetilde{\rm{V}}_{12}$\\
\hline
\vphantom{$\int\limits^.$}$\widetilde D_n$&  I+IV, II+IV&  II+IV\\
\hline
\end{tabular}
\end{center}
\end{table}

\subsection{Oriented cycles in mutation-finite diagrams}

\begin{lemma}
\label{cycles}
Let $\P$ be an oriented chordless cycle, $\P\subset \D$, where $\D$ is a mutation-finite diagram. Then $\P$ is either composed of simple arrows or it coincides with one of the cycles in Table.~\ref{short cycles}

\end{lemma}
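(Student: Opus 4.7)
The plan is to reduce the statement to a classification of mutation-finite oriented chordless cycles. The first step will be to invoke the standard fact that any subdiagram of a mutation-finite diagram is itself mutation-finite (mutations at vertices of a subdiagram commute with restriction to that subdiagram, so the mutation class of the subdiagram is a quotient of a subset of the ambient mutation class), so $\P$ is a mutation-finite oriented chordless cycle in its own right. By the characterization of mutation-finiteness recalled in Section~\ref{bckgr}, all arrow weights of $\P$ lie in $\{1,2,3,4\}$; combined with the general requirement that the product of weights around any chordless cycle of a diagram is a perfect square, this already reduces the possible weight patterns on $\P$ to a short list of candidates.

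The proof then proceeds by a case analysis on the length $n$ of $\P$. If $\P$ is simply-laced there is nothing to prove, so assume some arrow of $\P$ has weight at least $2$. For $n=3$ I would enumerate directly all triples of weights from $\{1,2,3,4\}$ whose product is a perfect square and which have at least one entry $\ge 2$, and for each candidate I would verify mutation-finiteness either by exhibiting it as (or inside) one of the blocks of Table~\ref{blocks}, or as a subdiagram of one of the exceptional mutation classes of Theorem~\ref{class}, or else rule it out by performing an explicit mutation that forces an arrow of weight exceeding $4$; the survivors will be exactly the cycles listed in Table~\ref{short cycles}. For $n\ge 4$ I would show that $\P$ must be simply-laced by contradiction: assuming consecutive arrows $i_0\to i_1$ and $i_1\to i_2$ of weights $w$ and $w'$ with $w\ge 2$, mutation at $i_1$ together with the rule $\pm\sqrt{c}\pm\sqrt{d}=\sqrt{ab}$ produces a new arrow of weight $ww'$ between the previously non-adjacent vertices $i_0$ and $i_2$, creating a chord in $\P$, and iterating this procedure along the cycle forces the accumulation of a heavy weight on some arrow, contradicting the bound of $4$ required by mutation-finiteness.

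The main obstacle will be carrying out this propagation argument cleanly in the $n\ge 4$ case, since both weights and orientations have to be tracked through a sequence of mutations and intermediate diagrams may contain shorter oriented subcycles of their own. An alternative route I would try first is to exploit Theorem~\ref{class} directly: for diagrams arising from triangulated surfaces or orbifolds, any chordless oriented cycle corresponds to a small closed configuration in the triangulation, and by inspecting the blocks in Table~\ref{blocks} one sees that the only way a non-simple arrow can appear inside such a cycle is as part of a triangle matching one of the entries of Table~\ref{short cycles}; the finitely many exceptional mutation classes would then be handled by a direct check.
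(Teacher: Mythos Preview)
Your main argument contains a genuine error: the claim that for $n\ge 4$ an oriented chordless mutation-finite cycle must be simply-laced is false. Table~\ref{short cycles} itself contains counterexamples of lengths $4$, $5$ and $6$: the $4$-cycles with weight sequences $(2,2,1,1)$, $(2,2,2,2)$, $(2,1,2,1)$ and $(3,1,3,1)$ (rows 6, 7, 8, 13), the $5$-cycle of type $\widetilde F_4$ (row 9), and the $6$-cycle of type $F_4^{(*,+)}$ (row 10). Your propagation argument cannot succeed because in each of these examples the product of two adjacent weights never exceeds $4$, so a single mutation as you describe does not immediately produce a forbidden arrow, and these cycles are genuinely mutation-finite. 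Your alternative route has a parallel error: you assert that in block-decomposable diagrams a non-simple arrow in a chordless cycle can only occur in a triangle, but rows 6 and 7 are block-decomposable $4$-cycles (built from two copies of the block $\widetilde{\rm IV}$, with or without a block of type ${\rm II}$).

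The paper does not attempt any length-based induction. It splits according to Theorem~\ref{class}: if $\P$ is block-decomposable, one observes that a chordless cycle is either a single block or is glued from blocks with at least two white vertices (types ${\rm I}$, ${\rm II}$, $\widetilde{\rm IV}$), and enumerating these gluings yields rows 1--7. If $\P$ is non-decomposable and skew-symmetric, Proposition~\ref{small} forces $|\P|\ge 6$, and then the presence of a weight-$4$ arrow together with its two neighbours gives a mutation-infinite $4$-vertex subdiagram, so $\P$ is simply-laced. If $\P$ is non-decomposable and not skew-symmetric, it lies in one of the seven exceptional non-skew-symmetric mutation classes of Table~\ref{exceptional}, which are small enough to search exhaustively for cyclic members, giving rows 8--13. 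The key structural input you are missing is the decomposable/exceptional dichotomy together with the order bound of Proposition~\ref{small}, which is what localizes the problem to a finite check rather than an unbounded propagation.
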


\begin{proof}
First, suppose that $\P$ is block-decomposable. It is easy to see that either $\P$ is a block or $\P$ is composed of blocks having at least two white vertices. Considering these two cases we get diagrams 1-7 in  Table.~\ref{short cycles}
(to simplify the reasoning we note that a block of type IV never lies in a block decomposition of an oriented cycle, so all decomposable cycles different from blocks are glued of blocks of types I, II and {$\widetilde{\rm{IV}}$}).

\begin{table}
\begin{center}
\label{short cycles}
\caption{Mutation-finite oriented cycles with a non-simple arrow }
\begin{tabular}{|c|c|c|c|}
\hline
&diagram& mutation class& triangulation (if any)\\
\hline
\raisebox{5mm}{1}& 
\psfrag{k}{\scriptsize }
\psfrag{l}{\scriptsize $$}
\psfrag{m}{\scriptsize $4$}
\epsfig{file=./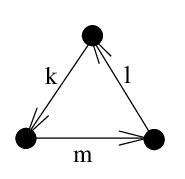,width=0.09\linewidth}
& \raisebox{5mm}{$\widetilde A_{2,1}$}& \epsfig{file=./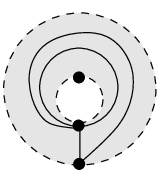,width=0.08\linewidth}\\
\hline
\raisebox{5mm}{2}& 
\psfrag{k}{\scriptsize $2$}
\psfrag{l}{\scriptsize $2$}
\psfrag{m}{\scriptsize }
\epsfig{file=./pic/d3.eps,width=0.09\linewidth}
& \raisebox{5mm}{$B_3$}& \epsfig{file=./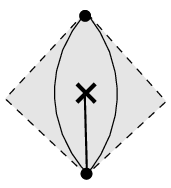,width=0.08\linewidth}\\
\hline
\raisebox{5mm}{3}& 
\psfrag{k}{\scriptsize $2$}
\psfrag{l}{\scriptsize $2$}
\psfrag{m}{\scriptsize $4$}
\epsfig{file=./pic/d3.eps,width=0.09\linewidth}
& \raisebox{5mm}{
\begin{tabular}{c}
$\widetilde B_2$\\
(see Remark~\ref{b2})
\end{tabular}
}& \epsfig{file=./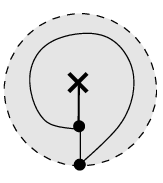,width=0.08\linewidth}\\
\hline
\raisebox{5mm}{4}& 
\psfrag{k}{\scriptsize $2$}
\psfrag{l}{\scriptsize $2$}
\psfrag{m}{\scriptsize $4$}
\epsfig{file=./pic/d3.eps,width=0.09\linewidth}
& \raisebox{5mm}{$\widetilde C_2$}& \epsfig{file=./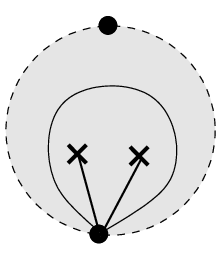,width=0.08\linewidth}\\
\hline
\raisebox{7mm}{5}& 
\psfrag{k}{\scriptsize $4$}
\psfrag{l}{\scriptsize $4$}
\psfrag{m}{\scriptsize $4$}
\raisebox{1mm}{\epsfig{file=./pic/d3.eps,width=0.09\linewidth}}
& 
\raisebox{7mm}{\begin{tabular}{c}
punctured \\
torus\\
\end{tabular}}
& \epsfig{file=./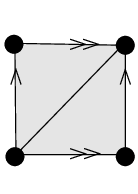,width=0.07\linewidth}\\
\hline
\raisebox{5mm}{6}& 
\psfrag{k}{\scriptsize $2$}
\psfrag{l}{\scriptsize $2$}
\psfrag{m}{\scriptsize }
\psfrag{n}{\scriptsize }
\raisebox{-1mm}{\epsfig{file=./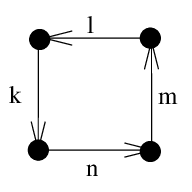,width=0.09\linewidth}}
& \raisebox{5mm}{$\widetilde B_3$}& \epsfig{file=./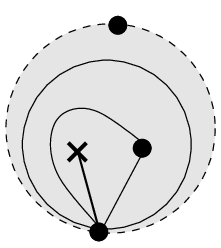,width=0.08\linewidth}\\
\hline
\raisebox{5mm}{7}& 
\psfrag{k}{\scriptsize $2$}
\psfrag{l}{\scriptsize $2$}
\psfrag{m}{\scriptsize $2$}
\psfrag{n}{\scriptsize $2$}
\raisebox{-2mm}{\epsfig{file=./pic/d4.eps,width=0.09\linewidth}}
&
\raisebox{5mm}{\begin{tabular}{c}
sphere with \\2 punctures and\\
2 orbifold points\\ 
\end{tabular}}
& \raisebox{-0.99mm}{\epsfig{file=./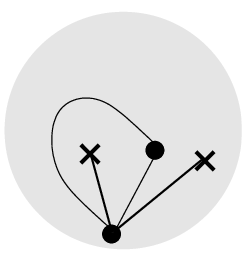,width=0.08\linewidth}}\\
\hline
\hline
\raisebox{5mm}{8}&
\psfrag{k}{\scriptsize $2$}
\psfrag{l}{\scriptsize }
\psfrag{m}{\scriptsize $2$}
\psfrag{n}{\scriptsize }
\raisebox{-0.99mm}{\epsfig{file=./pic/d4.eps,width=0.09\linewidth}}& \raisebox{5mm}{$F_4$}&\\
\hline
\raisebox{5mm}{9}&
\psfrag{2}{\scriptsize $2$}
\epsfig{file=./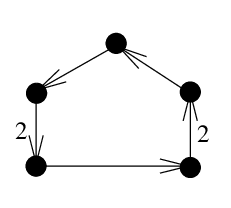,width=0.09\linewidth}& \raisebox{5mm}{$\widetilde F_4$}&\\
\hline
\raisebox{5mm}{10}&
\psfrag{2}{\scriptsize $2$}
\raisebox{-0.99mm}{\epsfig{file=./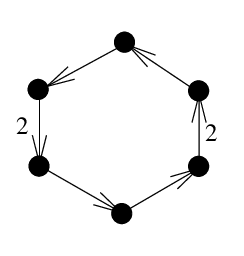,width=0.09\linewidth}}& \raisebox{5mm}{$F_4^{(*,+)}$}&\\
\hline
\raisebox{5mm}{11}&
\psfrag{k}{\scriptsize $3$}
\psfrag{l}{\scriptsize $3$}
\psfrag{m}{\scriptsize }
\epsfig{file=./pic/d3.eps,width=0.09\linewidth}& \raisebox{5mm}{$\widetilde G_2$}&\\
\hline
\raisebox{5mm}{12}&
\psfrag{k}{\scriptsize $3$}
\psfrag{l}{\scriptsize $3$}
\psfrag{m}{\scriptsize $4$}
\epsfig{file=./pic/d3.eps,width=0.09\linewidth}& \raisebox{5mm}{$\widetilde G_2$}&\\
\hline
\raisebox{5mm}{13}&
\psfrag{k}{\scriptsize $3$}
\psfrag{l}{\scriptsize }
\psfrag{m}{\scriptsize $3$}
\psfrag{n}{\scriptsize }
\raisebox{-0.99mm}{\epsfig{file=./pic/d4.eps,width=0.09\linewidth}}& \raisebox{5mm}{$G_2^{(*,+)}$}&\\
\hline
\end{tabular}
\end{center}
\end{table}

Suppose now that $\P$ is not decomposable. We consider the cases when $\P$ is skew-symmetric and non-skew-symmetric separately. 

If $\P$ is skew-symmetric then any arrow of $\P$ is labeled by $1$ or $4$. Furthermore, being a non-decomposable skew-symmetric mutation-finite diagram, $\P$ has at least 6 vertices (see Proposition~\ref{small}).
Suppose that one of the arrows is labeled by $4$ (otherwise there is nothing to prove). Then this arrow together with its two neighbors builds one of the subdiagrams in Fig.~\ref{4}. However, all of the four diagrams are mutation-infinite, which contradicts the assumptions.

\begin{figure}[!h]
\begin{center}
\psfrag{4}{\scriptsize $4$}
\epsfig{file=./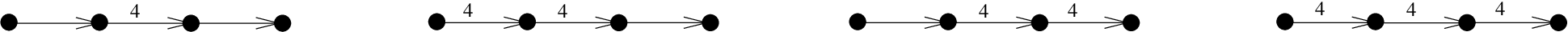,width=0.9\linewidth}
\caption{Four mutation-infinite diagrams.}
\label{4}
\end{center}
\end{figure}

If $\P$ is non-skew-symmetric non-decomposable diagram, then by Theorem~\ref{class} it is mutation-equivalent to one of the diagrams in the bottom part of Table~\ref{exceptional}. Using~\cite{Kel}, we check the mutation classes of these diagrams for cyclic diagrams and list all of them in rows  8--13  of  Table~\ref{short cycles}. (In fact, the mutation classes of these diagrams are not too big, at most 90 diagrams according to~\cite{Kel}, most of the diagrams having more arrows than the cyclic one should have).

\end{proof}

\begin{remark}
\label{b2}
The orbifold in row $3$ of Table~\ref{short cycles} is a disk with one puncture, one orbifold point and one marked point at the boundary, thus it may be considered as a partial case of mutation type $\t B_n$ (see Table~\ref{surface realizations}). This is the reason we call it $\t B_2$ (at the same time, the corresponding diagram is mutation-equivalent to $\t C_2$, see the next row in the table).   
\end{remark}

The following is an immediate corollary of Lemma~\ref{cycles}.

\begin{cor}
\label{cycles-af}
Let $\P$ be an oriented cycle. If $\P$ is a subdiagram of an affine diagram and not a subdiagram of any finite diagram 
then $\P$ is of one of the six types listed in Table~\ref{cycles-t}

\end{cor}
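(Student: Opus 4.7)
The plan is to combine Lemma~\ref{cycles} with Lemma~\ref{subd of aff} and then prune. First, by Lemma~\ref{subd of aff}, any subdiagram of an affine diagram is of finite or affine mutation type; the hypothesis that $\P$ is not a subdiagram of any finite diagram excludes the finite case (since $\P$ is a subdiagram of itself). So $\P$ itself is of affine mutation type.

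Next, Lemma~\ref{cycles} asserts that an oriented chordless cycle in a mutation-finite diagram is either a cycle of simple arrows or one of the $13$ diagrams in Table~\ref{short cycles}. I would filter this list using the conclusion of the preceding paragraph: rows~$2$ and~$8$ have finite mutation class ($B_3$ and $F_4$) and are ruled out, while rows~$5,7,10,13$ (once-punctured torus, sphere with two punctures and two orbifold points, $F_4^{(*,+)}$, $G_2^{(*,+)}$) are neither finite nor affine and are also ruled out. The rows that survive are $1,3,4,6,9,11,12$, all of affine type; identifying rows $3$ and $4$ as the same diagram (see Remark~\ref{b2}) leaves exactly six distinct diagrams, which are the entries of Table~\ref{cycles-t}.

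It remains to check that no simple-arrow oriented cycle can occur. By the first paragraph such a cycle would have to be of affine mutation type, and a cycle is clearly not a subdiagram of any finite Dynkin diagram (which is a tree). The only affine mutation classes whose diagrams can be cyclic are $\widetilde A_{k,n-k}$ with $k,n-k\ge 1$; the annulus description in Table~\ref{surface realizations}, together with the block decomposition of $\widetilde A$-diagrams into blocks of type~${\rm I}$ and~${\rm II}$, shows that no diagram in these classes is a fully oriented cycle of simple arrows (a fully oriented cycle of simple arrows of length three already corresponds to the once-punctured torus, not to an annulus). The main (and mild) obstacle lies precisely in this last verification; everything else is a direct cross-reference between Lemmas~\ref{cycles} and~\ref{subd of aff} and the table of mutation classes.
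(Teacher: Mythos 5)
Your filtering of Table~\ref{short cycles} is exactly the paper's (unwritten) argument: the paper calls the corollary ``immediate'' from Lemma~\ref{cycles}, and the intended reasoning is precisely to discard rows~$2,8$ (finite type) and rows~$5,7,10,13$ (neither finite nor affine, hence excluded by Lemma~\ref{subd of aff}), and to identify rows~$3$ and~$4$ as the same underlying diagram. That part of your proposal is correct and complete.

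The third paragraph, however, contains a genuine error and an unnecessary detour. First, you read ``finite diagram'' as ``finite Dynkin diagram (a tree)''; in this paper a \emph{finite diagram} means a diagram of finite mutation type, i.e.\ one mutation-equivalent to an orientation of a Dynkin diagram. A simply-laced oriented cycle of length $n$ \emph{is} of finite type (type $A_3$ for $n=3$ and type $D_n$ for $n\ge 4$ --- the paper states this explicitly in the proof of Lemma~\ref{rel-cycles}), so it is a subdiagram of a finite diagram, namely itself, and is excluded by the hypothesis in one line; no analysis of the $\widetilde A_{k,n-k}$ classes is needed. Second, your parenthetical claim that ``a fully oriented cycle of simple arrows of length three already corresponds to the once-punctured torus'' is false: the once-punctured torus corresponds to the Markov-type triangle in row~$5$ of Table~\ref{short cycles}, with all three arrows labelled $4$, whereas the oriented triangle of simple arrows is a block of type ${\rm II}$ (a single triangle of the triangulated surface) and is mutation-equivalent to $A_3$. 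As written, your justification for excluding simple cycles therefore rests on a wrong identification; the conclusion survives, but only because the correct one-line argument (finite type $\Rightarrow$ excluded by hypothesis) is available to replace it.
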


\begin{center}
\begin{table}
\caption{Oriented cycles of non-finite type in affine diagrams}
\label{cycles-t}
\begin{tabular}{|c|c|c|c|c|c|}
\hline
\psfrag{k}{\scriptsize }
\psfrag{l}{\scriptsize $$}
\psfrag{m}{\scriptsize $4$}
\epsfig{file=./pic/d3.eps,width=0.1\linewidth}
&
\psfrag{k}{\scriptsize $2$}
\psfrag{l}{\scriptsize $2$}
\psfrag{m}{\scriptsize $4$}
\epsfig{file=./pic/d3.eps,width=0.1\linewidth}
&
\psfrag{k}{\scriptsize $2$}
\psfrag{l}{\scriptsize $2$}
\psfrag{m}{\scriptsize }
\psfrag{n}{\scriptsize }
\epsfig{file=./pic/d4.eps,width=0.1\linewidth}
&
\psfrag{2}{\scriptsize $2$}
\epsfig{file=./pic/d5.eps,width=0.1\linewidth}
&
\psfrag{k}{\scriptsize $3$}
\psfrag{l}{\scriptsize $3$}
\psfrag{m}{\scriptsize $$}
\epsfig{file=./pic/d3.eps,width=0.1\linewidth}
&
\psfrag{k}{\scriptsize $3$}
\psfrag{l}{\scriptsize $3$}
\psfrag{m}{\scriptsize $4$}
\epsfig{file=./pic/d3.eps,width=0.1\linewidth}
\\
\hline
\vphantom{$\int\limits^.$}$\widetilde A_{2,1}$&$\widetilde B_2$ or $\widetilde C_2$&$\widetilde B_3$&$\widetilde F_4$&$\widetilde G_2$ &$\widetilde G_2$ \\
\hline
\end{tabular}
\end{table}
\end{center}

\subsection{Non-oriented cycles in mutation-finite diagrams}

We will also use the following lemma proved by Seven in~\cite{S1}.

\begin{lemma}[Proposition~2.1 (iv),~\cite{S1}]
\label{non-or}
Let $\D$ be a simply-laced mutation-finite skew-symmetric diagram and
let $\C\subset \D$ be a non-oriented chordless cycle.
Then for each vertex $x\in \D$ the number of arrows connecting $x$ with $\C$ is even.

\end{lemma}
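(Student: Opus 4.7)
The plan is to argue by contradiction. Suppose some vertex $x \in \D$ has an odd number $k$ of arrows to $\C$. I aim to produce a sequence of mutations of the subdiagram induced on $\C \cup \{x\}$ yielding an arrow of weight greater than $4$, contradicting the mutation-finiteness criterion stated earlier (a mutation-finite diagram of order $\ge 3$ has no arrows of weight $>4$ anywhere in its mutation class).

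First I would reduce to the case where $\C$ is a triangle. The reduction proceeds inductively on the length $d$ of $\C$. If $d>3$, then since $\C$ is non-oriented there is a pass-through vertex $v_i$ with $v_{i-1}\to v_i\to v_{i+1}$ (or the reverse). Because $\C$ is chordless and simply-laced, the skew-symmetric mutation rule creates a new simple arrow $v_{i-1}\to v_{i+1}$ when one mutates at $v_i$, yielding a cycle of length $d-1$ on $\C\setminus\{v_i\}$ that is again chordless and remains non-oriented (since any other direction change in $\C$ survives the mutation). The arrows between $x$ and the new cycle differ from the old only at $v_{i-1}$, $v_i$, $v_{i+1}$. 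By choosing $v_i$ not adjacent to $x$ whenever possible, these adjustments are trivial and the parity of the arrow count from $x$ to the cycle is preserved. If every pass-through vertex happens to be adjacent to $x$, a short case analysis handles this constrained configuration directly.

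Second, once reduced to a non-oriented triangle $v_1,v_2,v_3$ with $x$ connected by $k \in \{1,3\}$ arrows, the contradiction follows by explicit mutation. Up to symmetries (reversing all arrows and permuting vertices), the triangle may be taken to be $v_1\to v_2$, $v_1\to v_3$, $v_3\to v_2$, with $v_1$ a source and $v_2$ a sink. For $k=1$, a short sequence such as mutating at $v_1$, then $x$, then a resulting pass-through vertex produces a matrix entry with $|B_{ij}|\ge 3$, hence an arrow of weight $\ge 9$. The case $k=3$ reduces to the same finite computation.

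The main obstacle I expect is the parity control in the first step: mutation at a pass-through vertex adjacent to $x$ can simultaneously create and destroy arrows between $x$ and the cycle neighbors, so the naive parity of arrow count need not be preserved. The correct remedy is to replace the raw count by a signed invariant, attaching to each arrow between $x$ and a vertex $v_j$ of $\C$ a sign that depends on the local orientation of the two $\C$-arrows meeting $v_j$, and verifying that the signed total modulo $2$ is stable under pass-through mutations. Establishing this sign-consistency, which mirrors the sign-coherence built into the Fomin--Zelevinsky mutation rule, should be the technical heart of the argument.
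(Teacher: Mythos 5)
First, note that the paper does not prove this statement at all: it is imported verbatim from Seven (Proposition~2.1(iv) of~\cite{S1}), so there is no internal argument to compare yours against, only a citation. Your mutation-theoretic strategy is a reasonable thing to attempt, but as written it has genuine gaps. The most concrete one is in the very first reduction step: a non-oriented chordless cycle of length $d>3$ need \emph{not} contain a pass-through vertex. The alternating orientation of an even cycle (every vertex a source or a sink, e.g.\ the non-oriented square representing $\widetilde A_{2,2}$) has none, so the inductive shortening cannot even begin there. You would need a preliminary move --- say, mutating at a sink of the cycle, which reverses its two incident arrows without shortening the cycle --- to create a pass-through vertex, and then control what that does to the arrows at $x$ as well.

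Second, you correctly identify the parity control under a pass-through mutation as the technical heart, but you do not supply it: the ``signed invariant'' is named, not constructed, and nothing is verified. (In fact no sign is needed if one counts arrows with multiplicity, i.e.\ takes $\sum_{v\in\C}|b_{xv}|$: a direct check of the mutation rule shows that mutating at a pass-through vertex $v_i$ changes at most \emph{one} of $b_{x,v_{i-1}}$, $b_{x,v_{i+1}}$, and by exactly $|b_{x,v_i}|$, so dropping the term at $v_i$ changes the total by $-|b_{x,v_i}|\pm|b_{x,v_i}|$ and the parity survives --- but this is a computation you must actually perform, not postulate.) A further consequence you do not address is that this step can raise $|b_{x,v_{i+1}}|$ to $2$, so the induction leaves the simply-laced setting and your triangle base case must also cover $x$ attached to the cycle by arrows of weight $4$. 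Finally, the base-case mutation sequences claimed to produce an entry with $|b_{ij}|\ge 3$ are asserted rather than carried out, and the case where every pass-through vertex is adjacent to $x$ is waved away as ``a short case analysis.'' None of these obstacles looks fatal, but each requires real work, so what you have is a plausible outline rather than a proof.
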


\section{Groups defined by diagrams of affine type}
\label{aff group def}

\label{groups}
In this section, we define a group associated to a diagram of affine type. Our definition is similar to one given by Barot and Marsh~\cite{BM} for finite type, but with additional relations for some affine subdiagrams, see Table~\ref{add-t}.

Let $\G$ be a diagram with $n+1$ vertices. Following~\cite{BM} (and~\cite{S}), define
$$
m_{ij}=
\begin{cases}
2 & \text{if $i$ and $j$ are not joined;} \\
3 & \text{if $i$ and $j$ are joined by an arrow labeled by $1$;} \\
4 & \text{if $i$ and $j$ are joined by an arrow labeled by $2$;} \\
6 & \text{if $i$ and $j$ are joined by an arrow labeled by $3$.} 
\end{cases}
$$

\begin{defin}[Group $W_\G$ for a diagram $\G$ of affine type]
\label{def gp}
The group $W_{\G}$  with generators $s_1,\dots,s_{n+1}$ is defined by the following relations of four types:

\begin{itemize}
\item[(R1)] $s_i^2=e$ for all $i=1,\dots,n+1$;

\item[(R2)] $(s_is_j)^{m_{ij}}=e$ for all $i,j$ not joined by an arrow labeled by $4$;

\item[(R3)] (cycle relations) for every chordless oriented cycle $\C$ of length $d$ given by 
$$i_0\stackrel{w_{i_0i_1}}{\to} i_1\stackrel{w_{i_1i_2}}{\to}\cdots\stackrel{w_{i_{d-2}i_{d-1}}}{\to} i_{d-1}\stackrel{w_{i_{d-1}i_0}}{\to}i_0$$
and for every $l=0,\dots,d-1$ we define a number 
$$t(l)=\left(\prod\limits_{j=l}^{l+d-2}\!\!\!\!\sqrt{w_{i_ji_{j+1}}}\ -\sqrt{w_{i_{l+d-1}i_l}}\right)^2,$$ 
where the indices are considered modulo $d$;
now for every $l$ such that $t(l)<4$ we take relations
$$
(s_{i_l}s_{i_{l+1}}\dots s_{i_{l+d-2}}s_{i_{l+d-1}}s_{i_{l+d-2}}\dots s_{i_{l+1}})^{m(l)}=e,
$$
where 
$$
m(l)=
\begin{cases}
2 & \text{if $t(l)=0$;} \\
3 & \text{if $t(l)=1$;} \\
4 & \text{if $t(l)=2$;} \\
6 & \text{if $t(l)=3$.} 
\end{cases}
$$

\item[(R4)] (additional affine relations) for every subdiagram of $\G$ of the form shown in the first column of Table~\ref{add-t} we take the relations listed in the second column of the table.

\end{itemize}
\end{defin}

\begin{remark}
The fact that $t(l)$ in (R3) is integer follows from skew-symmetrizability of a matrix associated to $\G$, i.e. the product of weights along any chordless cycle of $\G$ is a perfect square (see Section~\ref{dm}).
\end{remark}

\begin{remark}
In the sequel by a {\it cycle} we always mean a chordless cycle. We will also refer to the relations above as {\it relation of type} (R1) (respectively, (R2), (R3) or (R4)).  
\end{remark}

\begin{table}
\caption{Additional relations for subdiagrams of affine diagrams. The type of the corresponding subdiagram is shown in the third column.}
\label{add-t}
\begin{tabular}{|c|c|c|}
\hline
\vphantom{$\int\limits^.$}Subdiagram & Relations & Type\\
\hline
\psfrag{4_}{\scriptsize $4$}
\psfrag{2_}{\scriptsize $$}
\psfrag{1}{\scriptsize $1$}
\psfrag{2}{\scriptsize $2$}
\psfrag{3}{\scriptsize $3$}
\psfrag{4}{\scriptsize $4$}
\epsfig{file=./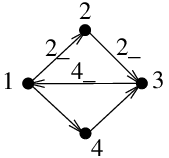,width=0.13\linewidth}
&
\raisebox{9mm}{$(s_1s_2s_3s_4s_3s_2)^2=e$}
&\raisebox{9mm}{\begin{tabular}{c}$\t A_{3}$\\ ($\vphantom{\int\limits^0}\t A_{2,2}$)\end{tabular}}\\
\hline&&\\
\psfrag{u}{\scriptsize $1$}
\psfrag{v}{\scriptsize $2$}
\psfrag{1}{\scriptsize $3$}
\psfrag{2}{\scriptsize $4$}
\psfrag{3}{\scriptsize $5$}
\psfrag{4}{\scriptsize $6$}
\psfrag{n}{\scriptsize $n+1$}
\psfrag{n1}{\scriptsize $n$}
\epsfig{file=./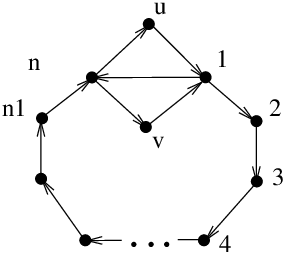,width=0.2\linewidth}
& \raisebox{13mm}{$(s_1s_2s_3s_2s_1 \ s_4s_5\dots s_{n}s_{n+1}s_{n}\dots s_5s_4)^2=e $}&\raisebox{13mm}{$\t D_n$,\ $n\ge 4$} \\
\hline
\psfrag{4_}{\scriptsize $4$}
\psfrag{2_}{\scriptsize $2$}
\psfrag{1}{\scriptsize $1$}
\psfrag{2}{\scriptsize $2$}
\psfrag{3}{\scriptsize $3$}
\psfrag{4}{\scriptsize $4$}
\epsfig{file=./pic/rel_B3.eps,width=0.13\linewidth}
& 
\raisebox{9mm}{
$(s_2s_3s_4s_1s_4s_3)^2=e$ 
}
&\raisebox{9mm}{$\t B_3$}\\
\hline&&\\
\psfrag{u}{\scriptsize $n+1$}
\psfrag{2_}{\scriptsize $2$}
\psfrag{1}{\scriptsize $1$}
\psfrag{2}{\scriptsize $2$}
\psfrag{3}{\scriptsize $3$}
\psfrag{4}{\scriptsize $4$}
\psfrag{n}{\scriptsize $n-1$}
\psfrag{n+1}{\scriptsize $n$}
\epsfig{file=./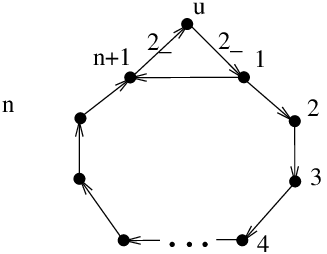,width=0.21\linewidth}\phantom{wv}
& \raisebox{13mm}{$(s_{n+1}s_1s_{n+1} \ s_2s_3\dots s_{n-1}s_{n}s_{n-1}\dots s_3s_2)^2=e $}&\raisebox{13mm}{$\t B_n$,\ $n\ge 3$} \\
\hline&&\\
\psfrag{1}{\scriptsize $1$}
\psfrag{2}{\scriptsize $2$}
\psfrag{3_}{\scriptsize $3$}
\psfrag{3}{\scriptsize $3$}
\psfrag{4}{\scriptsize $4$}
\epsfig{file=./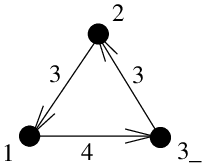,width=0.11\linewidth}
&
\raisebox{7mm}{
$(s_2s_1s_2s_1s_2s_3)^2=e $
}
&\raisebox{7mm}{$\t G_2$}\\
\hline
\end{tabular}
\end{table}

\begin{remark}
Table~\ref{subdiagrams} shows which of the subdiagrams from Table~\ref{add-t} appear in affine diagrams depending on the type of the latter. Note that there are two distinct additional affine relations for affine type $\t B_3$.

\end{remark}

\begin{table}
\caption{The types of affine diagrams (left column) containing affine subdiagrams requiring additional relations (right column)}
\label{subdiagrams}
\begin{tabular}{|c|c|}
\hline
\begin{tabular}{c}\vphantom{$\int\limits^.$}Mutation types of\\ affine diagrams\end{tabular} & Subdiagrams appearing in Table~\ref{add-t}\\
\hline
\vphantom{$\int\limits^.$}$\t A_{n,1}$, \ $n\ge 1$& \\
\hline
\vphantom{$\int\limits^.$}$\t A_{p,q}$, \ $p,q\ge 2$ & $\t A_{2,2}$\\
\hline
\vphantom{$\int\limits^.$}$\t D_n$, \ $n\ge 4$ & $\t A_{2,2}$, $\t D_k$, \ $k\le n$\\
\hline
\vphantom{$\int\limits^.$}$\t E_6$ &  $\t A_{2,2}$, $\t D_k$, $k\le 5$\\
\hline
\vphantom{$\int\limits^.$}$\t E_7$ &  $\t A_{2,2}$, $\t D_k$, $k\le 6$\\
\hline
\vphantom{$\int\limits^.$}$\t E_8$ &  $\t A_{2,2}$, $\t D_k$, $k\le 7$\\
\hline
\vphantom{$\int\limits^.$}$\t B_3$             & $\t B_3$\\
\hline
\vphantom{$\int\limits^.$}$\t B_n$, \ $n\ge 4$ & $\t A_{2,2}$, $\t B_k$, $k\le n$\\
\hline
\vphantom{$\int\limits^.$}$\t C_n$, \ $n\ge 2$ &                           \\
\hline
\vphantom{$\int\limits^.$}$\t F_4$ & $\t B_3$\\  
\hline
\vphantom{$\int\limits^.$}$\t G_2$ & $\t G_2$\\
\hline
\end{tabular}
\end{table}

The relations of types (R1), (R2) and (R3) are the relations introduced by Barot and Marsh~\cite{BM} for 
diagrams of finite type.  The expression for $t(l)$ (and $m(l)$) was suggested by Seven~\cite{S}. It is easy to see that in the case of finite Weyl groups the number $t(l)$ is either $0$ or $1$, and the expression for $m(l)$ coincides with one from~\cite{BM}.
After adding relations of type (R4) our definition still coincides with the definition 
in~\cite{BM} when restricted to diagrams of finite type since the diagrams used in relations of type (R4) are of affine type and cannot be subdiagrams of diagrams of finite type.

\begin{theorem}[\cite{BM}, Theorem A]
\label{bm}
Let $G_0$ be a finite Weyl group, and let $\G$ be a diagram of the same type as $G_0$. Then $G_0$ is isomorphic to $W_{\G}$. 

\end{theorem}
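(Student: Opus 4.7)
The plan is to follow the two-step strategy of Barot--Marsh: first establish the statement for the canonical acyclic representative of the mutation class, then prove that $W_\G$ is invariant under mutation, and combine the two steps using the fact that any two diagrams in a mutation class are connected by a sequence of mutations. For the base case, suppose $\G$ is an acyclic orientation of the Dynkin diagram of $G_0$. Since $\G$ has no oriented cycles, relations of type (R3) are vacuous. Moreover, all subdiagrams appearing in the first column of Table~\ref{add-t} are of affine type, and no affine diagram can be a subdiagram of a finite type diagram; so (R4) is vacuous as well. Hence $W_\G$ is defined by (R1) and (R2) alone, and this is exactly the Coxeter presentation of the Weyl group $G_0$ read off from the underlying weighted graph of $\G$.

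For the inductive step, it suffices to produce a group isomorphism $W_\G \cong W_{\mu_k \G}$ for every diagram $\G$ of finite type and every vertex $k$. Following~\cite{BM}, I would define $\varphi \colon W_{\mu_k \G} \to W_\G$ on generators by
$$
\varphi(s'_j) =
\begin{cases}
s_k s_j s_k & \text{if $\G$ contains an arrow $k \to j$,} \\
s_j & \text{otherwise,}
\end{cases}
$$
and then verify that the images satisfy the defining relations of $W_{\mu_k \G}$; by symmetry (applying $\mu_k$ once more) the same construction produces an inverse, giving an isomorphism. Relations (R1) are immediate, since each $\varphi(s'_j)$ is a conjugate of an involution. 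Relations (R2) and (R3) are checked by a local case analysis based on the configuration of the pair (or cycle) relative to $k$: if $\{i,j,k\}$ form an oriented triangle in $\G$, the mutation rule $\pm\sqrt{c}\pm\sqrt{d}=\sqrt{ab}$ translates the target braid/cycle relation for $\varphi(s'_i), \varphi(s'_j)$ back into the rank-$3$ cycle relation (R3) that holds in $W_\G$ for that triangle; if $i,j$ or $k$ are not mutually connected, the relation reduces to a direct identity among $s_i, s_j, s_k$ using (R2).

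For longer oriented cycles $\C$ in $\mu_k \G$, one distinguishes two possibilities: either $k \notin \C$, in which case $\C$ is already a chordless oriented cycle of $\G$ and the relation is inherited, or $\C$ passes through $k$, in which case $\C$ arises from a cycle of $\G$ by a local modification near $k$, and the long cycle relation of $\mu_k \G$ can be rewritten via $\varphi$ as a product of a long cycle relation and finitely many rank-$3$ cycle relations in $\G$. The main obstacle is bookkeeping: checking that the numbers $t(l)$ and the exponents $m(l)$ transform compatibly under mutation, and that every new chordless cycle in $\mu_k \G$ is accounted for by precisely one cycle or triangle of $\G$. In finite type this analysis is finite, because by closure of finite type under taking subdiagrams all local configurations are subdiagrams of Dynkin diagrams, whose oriented cycles are short and almost all simply-laced; the remaining non--simply-laced cases (inside $B_n/C_n$, $F_4$, $G_2$) are handled by a short list of explicit braid identities, and (R4) plays no role since it is vacuous throughout finite type.
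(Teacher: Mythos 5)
The paper does not actually prove this statement: it is imported verbatim as \cite{BM}, Theorem~A, and the only ``proof'' in the text is the citation. Your outline does reproduce the known Barot--Marsh strategy, which is also the template this paper follows for its affine analogue (Theorem~\ref{thm-aff}): the base case is right --- for an acyclic orientation of a Dynkin diagram the relations (R3) are vacuous, (R4) is vacuous throughout finite type because its supporting diagrams are affine, and $W_\G$ is literally the Coxeter presentation of $G_0$; and the conjugation map $\varphi$ is the correct inductive device, with the ``apply $\mu_k$ again'' observation (Remark~\ref{in-or-out} and Lemma~\ref{one way} in this paper) supplying the inverse.

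There is, however, a genuine gap in your inductive step. The dichotomy ``either $k\notin\C$, in which case $\C$ is already a chordless oriented cycle of $\G$ and the relation is inherited, or $\C$ passes through $k$'' is false as stated. Mutation at $k$ changes the arrows between pairs of neighbours of $k$, so a chordless oriented cycle of $\mu_k\G$ avoiding $k$ need not be a chordless cycle of $\G$ at all; and even when the vertex set is unchanged, the generators $s'_j$ for those $j\in\C$ adjacent to $k$ are sent to conjugates $s_ks_js_k$, so the image of the cycle relation is not literally a relation of $W_\G$ and must be re-derived. This is exactly the case this paper isolates as condition (C2) on ``risk diagrams'' $\R=x\cup\P$ with $x$ neither a sink nor a source of $\R$ (Lemmas~\ref{in and out} and~\ref{outside}), and it is where the bulk of the work lies both in \cite{BM} and in Sections~6.3--6.9 here; only when $x$ is a sink or a source of $x\cup\P$ can the conjugations be absorbed and the relation genuinely inherited. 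Without an argument covering these configurations (in finite type: $x$ attached to a simply-laced oriented cycle or to one of the non-simply-laced cycles of types $B_3$, $F_4$ from Table~\ref{short cycles}) your induction does not close.
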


\begin{lemma}[Seven~\cite{S}, Theorem 1.1]
\label{seven}
Let $W_0$ be an affine Weyl group, and let $\G$ be a diagram of the same type as $W_0$. Then $W_0$ is isomorphic to a quotient group of $W_{\G}$. 

\end{lemma}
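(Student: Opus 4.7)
The plan is to construct, for each diagram $\G$ in the mutation class of $\widetilde\G$, an explicit surjective homomorphism $\phi_\G\colon W_\G\twoheadrightarrow W_0$, proceeding by induction on the length of a mutation sequence connecting an acyclic orientation $\widetilde\G$ of the affine Dynkin diagram to $\G$. For the base case $\G=\widetilde\G$, acyclicity forbids any chordless oriented cycle, so no relation of type (R3) appears, and every subdiagram listed in Table~\ref{add-t} contains an oriented cycle and therefore cannot occur either, so no relation of type (R4) appears. The presentation of $W_{\widetilde\G}$ then reduces to (R1)--(R2), which is exactly the standard Coxeter presentation of $W_0$; sending each $s_i$ to the corresponding simple reflection of $W_0$ yields an isomorphism.

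For the inductive step, suppose $\phi_\G$ has been constructed with each $\phi_\G(s_i)=u_i$ a reflection in $W_0$. For $\G'=\mu_k(\G)$ I would set $\phi_{\G'}(s_i)=u_i$ for $i\ne k$ and replace $u_k$ by a suitable conjugate of $u_k$ by some $u_j$ with $j$ adjacent to $k$, mirroring how the real root attached to vertex $k$ transforms under mutation in the ambient affine root system. One then verifies that the relations of $W_{\G'}$ hold among the new reflections. Relations (R1)--(R2) reduce to checking that each pair of updated reflections generates a dihedral group of the correct order, which follows from the compatibility of the transformed weights $w_{ij}$ with the Cartan angles between the corresponding roots. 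For a cycle relation (R3), Seven's number $t(l)$ equals the squared length of the root $\beta=\bigl(\sum_j\alpha_{i_j}\bigr)-\alpha_{i_{l+d-1}}$ in the rank-$2$ subsystem spanned by $\beta$ and $\alpha_{i_{l+d-1}}$, and $m(l)$ is precisely the order of the product of the two corresponding reflections in that subsystem, so the indicated braided product has the required order in $W_0$.

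The main obstacle is the verification of the additional affine relations (R4) for each of the subdiagrams $\widetilde A_{2,2}$, $\widetilde D_n$, $\widetilde B_3$, $\widetilde B_n$, and $\widetilde G_2$ appearing in Table~\ref{add-t}. By Lemma~\ref{subd of aff}, each such subdiagram lies in the mutation class of an affine Dynkin diagram of matching type, so the reflections attached to its vertices generate an affine Weyl subgroup of $W_0$ isomorphic to the corresponding standard one; inside this subgroup the relation can be checked in the Euclidean reflection realization, a case-by-case but purely computational task. Finally, surjectivity of $\phi_\G$ is preserved throughout the induction, since mutation only re-indexes the generating set of reflections (with at most a conjugation by an element already in the image), so the image continues to equal all of $W_0$. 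Together these steps exhibit $W_0$ as a quotient of $W_\G$ for every $\G$ mutation-equivalent to $\widetilde\G$.
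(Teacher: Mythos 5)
The paper offers no proof of this lemma: it is imported verbatim from Seven [S, Theorem~1.1], whose argument is indeed of the shape you describe --- propagate a collection of real roots (a ``companion basis'') along the mutation sequence and check that the associated reflections satisfy the relations. So your overall strategy is the right one, but your inductive step contains a concrete error that breaks it. Under mutation at $k$ the root attached to $k$ is (up to sign) unchanged, and it is the roots attached to the \emph{neighbours} of $k$ that are reflected in $\beta_k$; correspondingly the correct update, used throughout Section~6 of this paper and in [BM], is $u_i\mapsto u_ku_iu_k$ for every $i$ with an arrow $i\to k$ and $u_i\mapsto u_i$ otherwise (in particular $u_k$ itself is kept). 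Your rule --- keep every $u_i$ with $i\ne k$ and replace $u_k$ by a conjugate --- cannot work, because two vertices $i,j\ne k$ that are non-adjacent in $\G$ may become adjacent in $\mu_k(\G)$, while their reflections, being unchanged, still commute. Concretely, for $\G = 1\to 2\to 3$ of type $A_3$ with $u_1=(12)$, $u_2=(23)$, $u_3=(34)$ in $S_4$, the diagram $\mu_2(\G)$ is an oriented triangle and requires $(t_1t_3)^3=e$; but your $t_1=u_1=(12)$ and $t_3=u_3=(34)$ commute, so this relation fails for either choice of conjugate of $u_2$.

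A secondary gap: Seven's theorem as cited covers relations (R1)--(R3) only, so the additional affine relations (R4) must indeed be verified separately in $W_0$, as you note; but your justification --- that the reflections attached to an affine-type subdiagram generate a subgroup isomorphic to the standard affine Weyl group of that type --- is both stronger than needed and unproved. What the verification actually requires is control of the pairwise Cartan integers (including signs) of the roots attached to the vertices of the subdiagram, and the diagram alone does not determine these; this is precisely the role of the admissibility conditions on companion bases in [S], which your sketch never invokes.
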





In Section~\ref{invariance} we prove the invariance of the group $W_{\G}$ under the mutation in the case of affine diagrams: 

\begin{theorem}
\label{thm-aff}
Let $W$ be an affine Weyl group and let $\G$ be a diagram mutation-equivalent to an orientation of a Dynkin diagram of the same type as $W$ different from an oriented cycle. Then $W$ is isomorphic to $W_{\G}$. 

\end{theorem}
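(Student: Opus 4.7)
The strategy is to combine Seven's Lemma~\ref{seven}, which supplies a surjection $W_\G\twoheadrightarrow W$ for every $\G$ in the mutation class of $\widetilde\G$, with mutation-invariance of the group $W_\G$. Mutation-invariance reduces the theorem to the single case $\G=\widetilde\G$, an acyclic orientation of an affine Dynkin diagram. In that base case $\widetilde\G$ has no chordless oriented cycle (either the underlying diagram is a tree, or the orientation is not cyclic by hypothesis), so the relations (R3) are vacuous and the remaining (R1) and (R2) are exactly the Coxeter relations of $W$. Hence there is a canonical surjection $W\twoheadrightarrow W_{\widetilde\G}$ obtained by further imposing the (R4) relations, and its composition with Seven's surjection is the identity on generators. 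Both maps are therefore mutually inverse isomorphisms, settling the base case.

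For the inductive step I would construct an isomorphism $W_\G\cong W_{\mu_k(\G)}$ for each vertex $k$ via the explicit map of~\cite{BM}: define $\phi\colon W_{\mu_k(\G)}\to W_\G$ on generators by $\phi(s_k)=s_k$, $\phi(s_j)=s_ks_js_k$ whenever $\G$ contains an arrow $k\to j$, and $\phi(s_j)=s_j$ otherwise. Because mutation is involutive, the analogous map in the reverse direction supplies an inverse once $\phi$ is known to be a well-defined homomorphism. Checking that $\phi$ respects (R1) and (R2) is a local computation near $k$, essentially identical to Barot--Marsh. Checking (R3) reduces to a finite case analysis over the chordless oriented cycles that can occur in a mutation-finite diagram, for which Lemma~\ref{cycles} and Corollary~\ref{cycles-af} give a short explicit list; whenever the cycle sits inside an affine type with a surface or orbifold realization (Table~\ref{surface realizations}), the cycle relation can be reinterpreted as a flip identity on the triangulated surface, replacing direct calculation by a geometric argument.

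The main obstacle will be the verification of the additional affine relations (R4) for every affine subdiagram of $\mu_k(\G)$ listed in Table~\ref{add-t}, because such a subdiagram need not arise under mutation from a subdiagram of $\G$ of the same type: new affine configurations can appear. For subdiagrams of types $\widetilde A_{2,2}$, $\widetilde D_n$, $\widetilde B_3$ and $\widetilde B_n$ I would use the surface and orbifold models from Table~\ref{surface realizations} to interpret the long (R4) words geometrically (for instance, as the twist of a boundary component of an annulus, or as a loop around a puncture or orbifold point), so that a mutation in $\G$ corresponds to a flip of the associated triangulation and the relation becomes a visible symmetry of the surface. The remaining type $\widetilde G_2$ has no such model, but by Table~\ref{subdiagrams} it only occurs as a subdiagram of affine diagrams in the $\widetilde G_2$ mutation class itself, whose mutation class is very small; the corresponding (R4) relation can then be checked by direct computation using the already established (R1)--(R3). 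Once all four relation families have been verified under $\phi$, it becomes a well-defined homomorphism with a symmetric inverse, mutation-invariance follows, and the theorem is proved.
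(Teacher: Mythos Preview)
Your overall strategy---establish the base case at the acyclic representative and then prove mutation invariance via the Barot--Marsh change of generators---is exactly the paper's. The base case is in fact cleaner than you state: every diagram in Table~\ref{add-t} contains an oriented cycle or a double arrow, so (R4) is also vacuous for $\widetilde\G$, and $W_{\widetilde\G}=W$ on the nose without invoking Seven or the Hopfian property. Where your sketch is too loose is the organization of the invariance check. The paper localizes everything via two conditions: (C1) verifies, for each subdiagram $\P$ supporting a relation (a \emph{pseudo-cycle}), that mutation at a vertex of $\P$ preserves $W_\P$; (C2) verifies, for each \emph{risk diagram} $\R=x\cup\P$ with $x$ neither sink nor source, that mutation at $x$ preserves $W_\R$. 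Lemma~\ref{in and out} then reduces global invariance to these two local checks. Your plan folds these into ``check each relation type'', which obscures the crucial case where the mutation vertex lies \emph{outside} the support of a relation but is connected to it---precisely the situation in which new cycles or new (R4) configurations appear after mutation.

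The second issue is how the surface models enter. You propose interpreting the long (R4) words as geometric elements (twists, loops) on the associated surface so that flip-invariance of the surface directly yields the relation. The paper does nothing of the sort, and I do not see how to make that precise: $W_\G$ is not a mapping class group or a fundamental group, and there is no evident functor from tagged triangulations to such groups that sends a flip to the Barot--Marsh conjugation. What the paper actually uses the surfaces and orbifolds for is structural: for each of $\widetilde A$, $\widetilde D$, $\widetilde B$, $\widetilde C$ it exploits the block decomposition (e.g.\ the analysis of how a double arrow can arise in Table~\ref{double edges}) to \emph{enumerate} the finitely many risk diagrams that can occur, and then verifies the relations on that short list by direct computation (Example~\ref{ex-gt2} and Lemmas~\ref{A-}, \ref{D-}, \ref{B-}). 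Finally, your sketch does not touch $\widetilde E_6,\widetilde E_7,\widetilde E_8$ or $\widetilde F_4$, which have no surface model. The paper's substitute there is Corollary~\ref{sum}: it proves that every skew-symmetric risk diagram built on a simply-laced cycle or on a pseudo-cycle of type $\widetilde A_{2,2}$ or $\widetilde D_k$ is block-decomposable, so the $\widetilde E_n$ risk diagrams are already covered by the surface cases handled earlier; this reduction is the main missing ingredient in your outline.
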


In particular, Theorem~\ref{thm-aff} implies that all groups $W_{\G}$ obtained for the affine diagrams are Coxeter groups.
 
Denote by  $\t W_{\G}$  the group obtained from  $W_{\G}$ by omitting all additional affine relations.

As it is shown in Section~\ref{difference}, the relations of type (R4) are essential: for some diagram in the mutation class of $\G$ the group  $\t W_{\G}$ is not isomorphic to $W$.

\begin{remark}
\label{cn}
As one can see from Table~\ref{subdiagrams}, the diagrams  mutation-equivalent to ones of type $\t C_n$ do not contain any subdiagram from Table~\ref{add-t}. This implies that for $\G$ of the type $\t C_n$ the groups $\t W_{\G}$ and $W_{\G}$ are isomorphic, and thus  $W_{\G}$ is isomorphic to $W$. The same holds for diagrams of type $\t A_{k,1}$.

\end{remark}

\section{Symmetry and redundancy of relations in the presentation of $W_{\G}$}
\label{redundance}

In this section, we show that the additional affine relations in the definition of the group  $W_{\G}$ 
imply more similar relations (obtained from symmetries of the diagram $\G$)
and that the number of cycle relations (type (R3) relations) in the presentation of $W_{\G}$ can be decreased significantly.
These properties will be extensively used later while proving the invariance of $W_{\G}$ under mutations. 

\subsection{Symmetries}

\begin{lemma}
\label{reversion}
Let $\G$ be a diagram of finite or affine type and $\G^{op}$ be the same diagram with all the directions of arrows reversed.
Then the groups $W_{\G^{op}}$ and $W_\G$ are isomorphic.


\end{lemma}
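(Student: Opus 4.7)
\medskip
\noindent
\textbf{Proof proposal.}
The plan is to define the natural map $\phi\colon W_\G\to W_{\G^{op}}$ sending each generator $s_i$ to the generator of the same name and to show that $\phi$ carries every defining relation of $W_\G$ to a relation that already holds in $W_{\G^{op}}$. Since the construction is fully symmetric in $\G$ and $\G^{op}$, applying the same argument in the opposite direction produces an inverse homomorphism, and the two maps together give the asserted isomorphism.

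Relations (R1) and (R2) depend only on the underlying unoriented weighted graph, which is the same for $\G$ and $\G^{op}$, so they pass trivially. The core of the argument is the (R3) case. Given a chordless oriented cycle $\C\colon i_0\stackrel{}{\to}i_1\to\cdots\to i_{d-1}\to i_0$ in $\G$, its reversal is the chordless oriented cycle $\C^{op}$ in $\G^{op}$ with vertex sequence $j_k:=i_{-k\bmod d}$. Since the weights are symmetric, the multiset of cycle-weights is preserved, and a direct inspection shows that for each position $l$ on $\C$ there is a position $l'$ on $\C^{op}$ (specifically $l'\equiv 1-l\pmod d$) for which the ``distinguished'' edge of $\C^{op}$ at $l'$ has the same weight as the distinguished edge of $\C$ at $l$. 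Hence
$$
t^{op}(l')=t(l),\qquad m^{op}(l')=m(l).
$$
Writing $u(l)=s_{i_l}Ws_{i_{l+d-1}}W^{-1}$ with $W:=s_{i_{l+1}}\cdots s_{i_{l+d-2}}$, and computing $v(l')$ analogously for $\C^{op}$, one checks directly (using only that each $s_i$ is an involution) that
$$
Wu(l)W^{-1}=v(l')^{-1},
$$
so $u(l)$ and $v(l')$ are conjugate up to inversion and therefore have equal order. Consequently $u(l)^{m(l)}=e$ is equivalent to $v(l')^{m^{op}(l')}=e$, which establishes that $\phi$ sends every (R3)-relation of $W_\G$ to a relation of $W_{\G^{op}}$.

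For (R4) one inspects the four entries of Table~\ref{add-t} one by one. In each case the defining word has the form $XY$ (or $(XY)^k$) with $X$ and $Y$ each a palindromic product of generators, hence $X^2=Y^2=e$ in $W_\G$; moreover the two ``halves'' $X,Y$ are determined by unoriented data (they are palindromes in reflections through the edges of the affine subdiagram). Reversing orientation of the relevant affine subdiagram either leaves it in exactly the same shape (as for $\widetilde A_{2,2}$, whose mutation class is self-dual) or replaces the prescribed relation by a second relation already listed in the table (as in the two-relation cases $\widetilde B_3$ and $\widetilde G_2$); in the remaining cases $\widetilde D_n$ and $\widetilde B_n$, swapping $X$ and $Y$ in $(XY)^2=e$ gives $(YX)^2=e$, which is the same relation in the group. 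Thus every (R4)-relation of $W_\G$ is either literally an (R4)-relation of $W_{\G^{op}}$ or an immediate consequence of such a relation together with (R1).

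Combining the three steps, $\phi$ descends to a well-defined homomorphism $W_\G\to W_{\G^{op}}$; the symmetric construction supplies its inverse. The main technical obstacle is the bookkeeping in Step~(R3), where one must match up the positions $l$ on $\C$ with the positions $l'$ on $\C^{op}$ and check that the conjugation $W(\cdot)W^{-1}$ transports one word to the inverse of the other; the rest of the verification is routine.
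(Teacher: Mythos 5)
Your proposal is correct and follows essentially the same route as the paper: relations (R1)--(R2) are orientation-independent, each (R3)/(R4) relation of $\G^{op}$ is matched to one of $\G$ with the same exponent ($t^{op}(l')=t(l)$) and shown to be a conjugate (the paper conjugates directly via $(s_jw^{-1}s_iw)^k=s_jw^{-1}(s_iws_jw^{-1})^kws_j$, you conjugate up to inversion, which is equally sufficient), and the two-relation cases $\widetilde B_3$, $\widetilde G_2$ are absorbed because both orientations are already listed in Table~\ref{add-t}.
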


\begin{proof}
Note that the subdiagrams that are supports of the relations of types (R1)--(R4) are the same for $\G$ and $\G^{op}$. Thus, 
it is sufficient to prove the statement for each subdiagram supporting a relation of $W_{\G^{op}}$ and $W_\G$.
In particular, it is clear that the relations of types (R1) and (R2) do not depend on the directions of arrows.

Our aim is to prove that the relations of types (R3) and (R4) do not depend on the simultaneous change of orientation of all arrows.
First, suppose that $\G$ is not a diagram defining additional affine relation of type $\widetilde B_3$ or $\widetilde G_2$. 
Then all relations of types (R3) and (R4) have form
$$
(s_iws_jw^{-1})^k=e
$$
(where $w$ is a word in the alphabet $\{s_1,\dots,s_{n+1}\}$), and
after simultaneous reversing of all arrows the corresponding relation rewrites as 
$$
(s_jw^{-1}s_iw)^k=e.
$$
The latter is clearly conjugate to the initial relation:
$$
(s_jw^{-1}s_iw)^k=s_jw^{-1}(s_iws_jw^{-1})^k ws_j,
$$
so these relations are equivalent.
 
It remains to check the statement for the diagrams defining additional affine relations of type $\widetilde B_3$ or $\widetilde G_2$. But in these cases reversing of all arrows does not affect additional affine relations (since we include both directions in the definition of the group) and all the other relations are treated as above.
\end{proof}

\begin{remark}
Lemma~\ref{reversion} for diagrams of finite type was proved in~\cite{BM} (see Prop.~4.6). 
\end{remark}

\begin{remark}
The diagram of type $\widetilde D_k$ defining additional affine relation has extra symmetry interchanging the vertices $1$ and $2$ (see Table~\ref{add-t}).
The relation obtained via this symmetry may be obtained from the initial one by interchanging $s_1$ and $s_2$
($s_1$ and $s_2$ commute, and they are neighbors in the relation).

Similarly, there is a symmetry in the  diagram defining additional affine relation  of type $\widetilde A_{2,2}$
(swapping vertices 2 and 4). After application of this symmetry the relation rewrites as $(s_1s_4s_3s_2s_3s_4)^2=e$, which is equivalent 
to the initial relation  $(s_1s_2s_3s_4s_3s_2)^2=e$: 
$$
 (s_1s_4s_3s_2s_3s_4)^2 \stackrel{(s_2s_3)^3=e}{=}(s_1s_4s_2s_3s_2s_4)^2 \stackrel{(s_2s_4)^2=e}{=}
(s_1s_2s_4s_3s_4s_2)^2 \stackrel{(s_3s_4)^3=e}{=}(s_1s_2s_3s_4s_3s_2)^2.
$$

\end{remark}

\begin{remark}
One can see that the additional relation for $\t B_3$ is equivalent to $(s_2 s_1 s_4 s_3 s_4 s_1)^ 2 = e$, and the additional relation for $\t G_2$ is equivalent to $(s_2 s_3 s_2 s_3 s_2 s_1)^ 2 = e$ (we used {\tt Magma}~\cite{BCP} for verification of the equivalence).
\end{remark}

\subsection{Redundancy of cycle relations}

By the definition of the group  $W_{\G}$, each oriented cycle of order $k$ defines $k$ relations of type (R3). In fact, not all of them 
are essential: in most cases it is sufficient to choose just one suitable relation.

\begin{lemma}[Lemma~4.1,~\cite{BM}]
Let $\C$ be an oriented simply-laced cycle. Then all relations of the group $W_\C$ follow from relations
of types (R1), (R2) and any one relation of type (R3).

\end{lemma}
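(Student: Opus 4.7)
Label the vertices of $\C$ as $i_0,\dots,i_{d-1}$ in cyclic order, set $a_l:=s_{i_l}$, and read all indices mod $d$. Since $\C$ is simply-laced, every $t(l)$ equals $0$ and every $m(l)$ equals $2$, so the $d$ relations of type (R3) are simply $r_l^2=e$ for $l=0,\dots,d-1$, where
$$r_l = a_l\,a_{l+1}\cdots a_{l+d-2}\,a_{l+d-1}\,a_{l+d-2}\cdots a_{l+1}.$$
The plan is to establish the shift identity
$$r_{l+1} \;=\; (a_l a_{l+1})\, r_l\, (a_{l+1} a_l)$$
as an equality already in the group presented by (R1) and (R2) alone. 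Once this identity is in hand, conjugating $r_l^2=e$ by $a_l a_{l+1}$ gives $r_{l+1}^2=e$, and iterating around $\C$ shows that any single cycle relation, together with (R1) and (R2), forces all of the others.

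To verify the shift identity I would expand $(a_l a_{l+1})\,r_l\,(a_{l+1} a_l)$ and simplify in three steps. First, the outer $a_l a_{l+1}$ meets the opening $a_l a_{l+1}$ of $r_l$: the braid relation $(a_l a_{l+1})^3=e$ from (R2) rewrites $a_l a_{l+1} a_l a_{l+1}$ as $a_{l+1} a_l$, reducing the expression to
$$a_{l+1}\,\bigl[\,a_l\cdot(a_{l+2}\cdots a_{l+d-2})\,a_{l+d-1}\,(a_{l+d-2}\cdots a_{l+2})\cdot a_l\,\bigr].$$
Second, since $\C$ is chordless, the only neighbours of $i_l$ on $\C$ are $i_{l+1}$ and $i_{l+d-1}$, so by (R2) each of $a_{l+2},\dots,a_{l+d-2}$ commutes with $a_l$; using these commutations I push the two outer copies of $a_l$ inward until both sit flush against the central $a_{l+d-1}$. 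Third, the adjacency of $i_l$ and $i_{l+d-1}$ gives the braid $a_l\,a_{l+d-1}\,a_l = a_{l+d-1}\,a_l\,a_{l+d-1}$; applying it transforms the middle triple, and combining with the surrounding factors one reads off exactly $r_{l+1}$.

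The only nontrivial ingredient is guessing the conjugator $a_l a_{l+1}$; this is the main obstacle, and once it is in hand the remaining computation is a short sequence of braid and commutation moves valid in any group satisfying (R1) and (R2). No separate analysis of small $d$ is needed: when $d=3$ the commuting block $\{a_{l+2},\dots,a_{l+d-2}\}$ is empty, step two is vacuous, and the single braid between $a_l$ and $a_{l+d-1}=a_{l+2}$ in step three already finishes the job.
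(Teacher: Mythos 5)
Your proof is correct: the shift identity $r_{l+1}=(a_la_{l+1})\,r_l\,(a_la_{l+1})^{-1}$ does hold in the group presented by (R1) and (R2) alone, via exactly the braid--commute--braid reduction you describe, and iterating it recovers all cycle relations from any single one. The paper does not reprove this lemma but cites it from~\cite{BM}, where the argument is essentially the same conjugation computation, so your approach matches the source.
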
 

For non-simply-laced cycles the situation is more involved: already for an oriented cycle of type $B_3$ it is not clear whether each of the three relations can be chosen as a defining relation (see~\cite{BM}). However, it is shown in~\cite[Lemmas~4.2 and~4.4]{BM} that for oriented cycles of mutation types $B_3$ and $F_4$ (rows 2 and 8 in Table~\ref{short cycles}) one can choose any cycle relation with $m(l)=2$, and all the other cycle relations for the given cycle will follow from the chosen one.

The results of Lemmas~4.1, 4.2 and 4.4 from~\cite{BM} may be summarized as follows.

\begin{lemma}[Lemmas~4.1, 4.2 and 4.4,~\cite{BM}]
Let  $\C$ be an oriented cycle of finite type. 
Then there exists a cycle relation $r_l$ for $\C$ such that $m(l)=2$ (see Definition~\ref{def gp}).
Moreover, $r_l$ implies all other cycle relations supported by the cycle $\C$.

\end{lemma}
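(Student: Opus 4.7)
The plan is to combine the three cases of \cite{BM} via the classification of mutation-finite oriented cycles. By Lemma~\ref{cycles}, the oriented cycles of finite type are exactly the simply-laced cycles of any length $d\ge 3$ together with the two exceptional cycles of types $B_3$ (row~2 in Table~\ref{short cycles}) and $F_4$ (row~8). Everything else appearing in Lemma~\ref{cycles} is of affine or surface/orbifold type and is excluded from the present statement.

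For the existence of an $r_l$ with $m(l)=2$, I would simply compute $t(l)$ in each case. In the simply-laced case every $w_{ij}$ equals $1$, so $t(l)=(1-1)^2=0$ for every $l$ and every cycle relation already has $m(l)=2$. For the $B_3$ triangle with cyclic weight sequence $(2,2,1)$, a direct evaluation gives $t(l)=(\sqrt{2}\cdot\sqrt{1}-\sqrt{2})^2=0$ at each of the two positions adjacent to the weight-$1$ edge. For the $F_4$ quadrilateral with alternating weights $(2,1,2,1)$, the same formula gives $t(l)=(\sqrt{1}\cdot\sqrt{2}\cdot\sqrt{1}-\sqrt{2})^2=0$ at the two positions whose outgoing arrow has weight $1$. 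In each case this establishes the existence of the required $r_l$.

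For the second assertion, I would fix such a distinguished relation $r_l$, which takes the form $(s_{i_l}\,w\,s_{i_{l+d-1}}\,w^{-1})^2=e$ with $w=s_{i_{l+1}}\cdots s_{i_{l+d-2}}$, and derive each other cycle relation $r_{l'}$ supported by the same cycle by conjugation. The key observation is that $r_l$ is equivalent to the statement that the involutions $s_{i_l}$ and $w\,s_{i_{l+d-1}}\,w^{-1}$ commute; conjugating both sides by one neighboring generator at a time and applying the braid relations of type~(R2) shifts the basepoint of the cycle relation by one step around $\C$. In the simply-laced case all relevant braid relations have order~$3$, and a single such shift reproduces the argument of \cite[Lemma~4.1]{BM}. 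In the $B_3$ and $F_4$ cases the shift across an arrow of weight~$2$ uses the order-$4$ braid relation, and one must verify case by case that the resulting relations with $m(l')\in\{3,4\}$ indeed follow; this is precisely the content of \cite[Lemmas~4.2 and~4.4]{BM}.

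The main obstacle is this last step: tracking the conjugations and the successive applications of braid relations carefully enough in the non-simply-laced cases to deduce every remaining cycle relation from the chosen $m(l)=2$ one. The computation is finite and purely combinatorial but genuinely tedious, involving words in up to four generators whose lengths grow linearly with the number of intermediate shifts. These verifications are carried out explicitly in \cite{BM} for both the $B_3$ triangle and the $F_4$ quadrilateral, and my approach would be to invoke those computations directly rather than reproduce them, since the classification above ensures that no further finite-type oriented cycles require separate treatment.
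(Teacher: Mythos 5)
Your proposal is correct and follows essentially the same route as the paper, which states this lemma purely as a summary citation of \cite[Lemmas~4.1, 4.2 and 4.4]{BM} and likewise defers the derivation of the remaining cycle relations to those computations; your added classification of finite-type oriented cycles via Lemma~\ref{cycles} and the explicit $t(l)$ evaluations are consistent with the paper's framework. One small slip: in the $B_3$ triangle the two positions with $t(l)=0$ are the heads of the two weight-$2$ arrows (one of which is \emph{not} an endpoint of the weight-$1$ arrow), rather than ``the two positions adjacent to the weight-$1$ edge'', but this does not affect the existence claim.
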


A direct computation (very similar to one in~\cite{BM}) shows that the statement above can be extended to almost all affine 
cyclic diagrams:

\begin{lemma}
Let  $\C$ be an oriented cycle of affine type not requiring additional affine relations
(i.e. distinct from the cycle of type $\widetilde G_2$ in Table~\ref{add-t}).

If $\C$ is not of type $\widetilde A_{2,1}$ then  there exists a cycle relation $r_l$ for $\C$ such that $m(l)=2$ (see Definition~\ref{def gp}), and this relation $r_l$ implies all the other cycle relations supported by the cycle $\C$.

If $\C$ is of type $\widetilde A_{2,1}$ then $r(l)=3$ for all $l$ and all the three relations are equivalent.  

In addition, if $\C$ is an oriented cycle  of type $\widetilde G_2$ shown in Table~\ref{add-t},
then two of the cycle relations (with $m(l)=6$) follow from the additional relation and the third cycle relation
(with $m(l)=3$).

\end{lemma}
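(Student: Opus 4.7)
The plan is to verify the statement case by case, following exactly the strategy developed by Barot and Marsh in the proofs of \cite[Lemmas~4.1, 4.2, 4.4]{BM}. The affine oriented cycles of non-finite type are enumerated in Corollary~\ref{cycles-af} and listed in Table~\ref{cycles-t}, so there are only six shapes to handle: the triangle $\widetilde A_{2,1}$ (weights $1,1,4$), the triangle $\widetilde B_2/\widetilde C_2$ (weights $2,2,4$), the $4$-cycle of type $\widetilde B_3$, the $5$-cycle of type $\widetilde F_4$, and the two $\widetilde G_2$ triangles (weights $1,3,3$ and weights $3,3,4$).

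First I would compute the list of numbers $t(l)$, and hence $m(l)$, for each of these cycles using Definition~\ref{def gp}. A direct calculation gives: for $\widetilde A_{2,1}$ all three $t(l) = 1$, so $m(l) = 3$ at every position and no $m(l) = 2$ relation exists; for $\widetilde B_2/\widetilde C_2$ one $t(l) = 0$ and the other two $t(l) = 2$; for $\widetilde B_3$ two $t(l) = 0$ and two $t(l) = 1$; for $\widetilde F_4$ at least one $t(l) = 0$; for $\widetilde G_2$ with weights $1,3,3$ two values $t(l) = 0$ and the third $t(l) = 4$, so only two relations are imposed and both have $m(l) = 2$; and for $\widetilde G_2$ with weights $3,3,4$ one $t(l) = 1$ (giving $m(l) = 3$) and two $t(l) = 3$ (giving $m(l) = 6$). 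These computations match the case distinctions made in the statement.

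For each cycle other than $\widetilde A_{2,1}$ and the $\widetilde G_2$ triangle with weights $3,3,4$, I would single out an index $l_0$ with $m(l_0) = 2$ and show that the corresponding relation $r_{l_0}$ implies every other cycle relation supported on $\C$. The tool, as in \cite{BM}, is rewriting inside the dihedral subgroups controlled by (R2) along pairs of adjacent vertices: one slides conjugation past a single generator at a time and uses (R1), (R2) to move from $r_{l_0}$ to the cycle relation at index $l_0 \pm 1$; iteration then yields all remaining cycle relations. For $\widetilde A_{2,1}$, where no $m(l) = 2$ relation is available, I would perform essentially the same manipulation but entirely within the triple of $m(l) = 3$ relations and within the infinite dihedral subgroup coming from the weight-$4$ edge; cyclic symmetry of the triangle together with conjugation by the appropriate $s_i$ shows that any one of the three order-$3$ relations implies the other two.

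Finally, for the $\widetilde G_2$ addendum I would start from the unique $m(l) = 3$ cycle relation of the $3,3,4$-triangle together with the two additional affine relations from Table~\ref{add-t}, namely $(s_2 s_1 s_2 s_1 s_2 s_3)^2 = e$ and $(s_2 s_3 s_2 s_3 s_2 s_1)^2 = e$, and derive the two missing $m(l) = 6$ cycle relations by direct substitution. The main obstacle throughout is bookkeeping: the $\widetilde B_3$, $\widetilde F_4$, and $\widetilde G_2$ (weights $3,3,4$) cases produce long words, and for $\widetilde A_{2,1}$ the weight-$4$ edge gives an infinite dihedral subgroup, so one cannot use a Coxeter relation on that edge to collapse terms. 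Nevertheless, in each case the argument is parallel to \cite[Section~4]{BM} and requires no new idea beyond patient manipulation inside the dihedral subgroups imposed by (R2).
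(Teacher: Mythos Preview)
Your proposal is correct and matches the paper's approach: the paper's entire proof is the single sentence ``A direct computation (very similar to one in~\cite{BM}) shows that the statement above can be extended to almost all affine cyclic diagrams,'' so your case-by-case verification via the $t(l)$ values and dihedral rewriting in the style of \cite[Lemmas~4.1, 4.2, 4.4]{BM} is exactly what is intended. Your computations of $t(l)$ for each cycle in Table~\ref{cycles-t} are accurate, including the observation that the $(1,3,3)$ triangle of type $\widetilde G_2$ has $t(l)=4$ at one vertex (so only two relations, both with $m(l)=2$) while the $(3,3,4)$ triangle from Table~\ref{add-t} has $m(l)\in\{3,6,6\}$.
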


\begin{cor}
Let  $\C$ be an oriented cycle of affine type not requiring additional affine relations.
Then there exists one cycle relation for $\C$ implying all the other cycle relations for $\C$.

\end{cor}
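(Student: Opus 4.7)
The plan is to deduce the corollary directly from the immediately preceding lemma by a short case analysis on the mutation type of $\C$. The hypothesis that $\C$ does not require additional affine relations excludes only the $\widetilde G_2$ cycle from Table~\ref{add-t}, so $\C$ falls under exactly one of the two alternatives treated in the lemma, and in each alternative the lemma already does essentially all the work.

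First I would handle the case when $\C$ is not of type $\widetilde A_{2,1}$. Here the lemma produces a specific cycle relation $r_l$ with $m(l)=2$ which implies every other cycle relation supported by $\C$, and this $r_l$ is precisely the single relation required by the corollary. No further argument is needed in this case.

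For the remaining case, $\C$ is of type $\widetilde A_{2,1}$. The lemma asserts that $m(l)=3$ for every $l$ and that all three cycle relations supported by $\C$ are equivalent. Thus picking any one of these three relations immediately implies the other two, once again establishing the existence of a single cycle relation with the desired property.

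Since no real obstacle arises --- the substantive content lies entirely in the preceding lemma --- the only care needed is to verify that the hypothesis of the corollary exactly matches the union of the two cases of the lemma, i.e.\ that the only oriented affine cycle omitted from the lemma's reach is the $\widetilde G_2$ cycle of Table~\ref{add-t}, which the corollary also excludes. In the $\widetilde A_{2,1}$ case the equivalence of the three relations means the choice of generator is immaterial, while in every other admissible case the lemma distinguishes a canonical generator with $m(l)=2$.
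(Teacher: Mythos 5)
Your proposal is correct and matches the paper's intent exactly: the corollary is stated without proof precisely because it is the immediate combination of the two cases of the preceding lemma, with the hypothesis excluding only the $\widetilde G_2$ cycle that the lemma treats separately. Your case split (a distinguished relation with $m(l)=2$ when $\C$ is not of type $\widetilde A_{2,1}$, and the equivalence of all three relations when it is) is the same reading of the lemma the authors rely on.
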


\begin{remark}
It is possible to prove that the additional relation for $\t G_2$ together with relations of type (R1) and (R2) do not form defining set of relations for this diagram without the cycle relation with $m(l)=3$ (indeed, the latter relation contains odd number of letters $s_3$ while all the other relations of types (R1)--(R4) have even number of $s_3$'s).

\end{remark}

\section{Proof of Theorem~\ref{thm-aff}}
\label{invariance}
In this section, we prove that for diagram $\G$ of affine type the group $W_{\G}$ is invariant under mutations.

We follow the plan of the proof from~\cite{BM}. Given two diagrams $\G_1$ and $\G_2$ related by a single mutation, we show that the groups $W_{\G_1}$ and $W_{\G_2}$ are isomorphic. For this, we investigate subdiagrams of $\G_1$ and $\G_2$ of type $\P\cup \{x\}$, where $\P$ is a subdiagram supporting some relation, 
and show that the subgroup $W_{\P\cup \{x\}}$ does not change after mutations. The isomorphism can be constructed explicitly.  

Let $\G$ be a diagram of affine type, and let $W_{\G}$ be the corresponding group. For $x\in\G$ we consider $\G'=\mu_x(\G)$ and the corresponding group $W_{\G'}$. Following~\cite{BM}, we want to show that the elements $s_i'$, where   
$$
s_i'=\begin{cases}
s_xs_is_x & \text{if there is an arrow $i\to x$ in $\G$,} \\
s_i & \text{otherwise},
\end{cases}\eqno (*)
$$
satisfy the same relations as the generators of the group $W_{\G'}$. 
Since $\{s_i'\}_{i=1,\dots,n+1}$ generate  $W_{\G}$, this will mean that  the groups $W_{\G}$ and $W_{\G'}$ are isomorphic. 

\begin{remark}
\label{in-or-out}
In the definition of $s_i'$ we could freely choose to conjugate $s_i$ for outgoing arrows   $i\gets x$
rather than for incoming arrows  $i\to x$: this alteration does not affect the group with generators $\{s_i'\}$ since it is equivalent to conjugation of all generators by $s_x$.

\end{remark}


\subsection{Pseudo-cycles and risk diagrams.}
In this section we collect elementary properties of the group $W_{\G}$ and introduce the subdiagrams we will use in the sequel.

\begin{defin}[Pseudo-cycle]
We call a subdiagram $\P$ of $\G$ a {\it pseudo-cycle} if the vertices of $\P$ form the support of some relation from the presentation of $W_{\G}$. In particular, every oriented cycle $\C\subset\G$ is a pseudo-cycle. 

\end{defin}

The following statement observed in~\cite{BM} applies without any changes in our settings.

\begin{lemma}
\label{one way}
If a mutation $\mu_x$ for $x\in \G$ preserves the group $W_{\G}$ then
the mutation $\mu_x$ preserves the group $W_{\mu_x(\G)}$  defined by the diagram $\mu_x(\G)$. 

\end{lemma}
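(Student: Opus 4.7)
The plan is to leverage the involutivity of mutation, $\mu_x(\mu_x(\G)) = \G$, to transfer an isomorphism in one direction into an isomorphism in the other. Write $\G' := \mu_x(\G)$, and denote the generators of $W_\G$ and $W_{\G'}$ by $\{s_i\}$ and $\{t_i\}$ respectively. Since $\mu_x(\G') = \G$, the analogs of the elements $s_i'$, now defined inside $W_{\G'}$ from the diagram $\G'$, are
\[
t_i'' = \begin{cases} t_x t_i t_x & \text{if } i \to x \text{ in } \G', \\ t_i & \text{otherwise.} \end{cases}
\]
The statement to prove is that the assignment $s_i \mapsto t_i''$ extends to a well-defined isomorphism $\phi'\colon W_\G \to W_{\G'}$. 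By hypothesis, the assignment $t_i \mapsto s_i'$ extends to a well-defined isomorphism $\phi\colon W_{\G'} \to W_\G$.

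The heart of the argument is the short computation $\phi(t_i'') = s_x s_i s_x$ for every $i$. Since $\mu_x$ reverses arrows incident to $x$, the condition ``$i \to x$ in $\G'$'' is equivalent to ``$x \to i$ in $\G$''. One then splits into cases: $i = x$, $i$ not adjacent to $x$, $i \to x$ in $\G$, and $x \to i$ in $\G$. In each case direct evaluation using the definition of $\phi$ yields $\phi(t_i'') = s_x s_i s_x$; in the non-adjacent case one additionally uses that $s_x$ and $s_i$ commute by relation (R2), and in the case $i = x$ one uses $s_x^2 = e$. This matches neatly with Remark~\ref{in-or-out}, which is precisely the observation that the choice between incoming and outgoing arrows in the definition amounts to a global conjugation by $s_x$.

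With the key computation in hand, define $\phi' := \phi^{-1} \circ \operatorname{conj}_{s_x}\colon W_\G \to W_{\G'}$, where $\operatorname{conj}_{s_x}(g) = s_x g s_x$. This is the composition of an inner automorphism of $W_\G$ and an isomorphism $W_\G \to W_{\G'}$, hence itself an isomorphism. By the computation, $\phi'(s_i) = \phi^{-1}(s_x s_i s_x) = t_i''$, so $\phi'$ is precisely the map we needed to exhibit. I do not expect any genuine obstacle here: the lemma is a formal consequence of the involutivity of $\mu_x$ together with the symmetric nature of the rule defining $s_i'$. The only care required lies in the bookkeeping of arrow directions in $\G$ versus $\G'$ and in the case-by-case verification that $\phi(t_i'')=s_xs_is_x$.
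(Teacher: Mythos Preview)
Your proof is correct and takes essentially the same approach as the paper. The paper's argument is a one-line appeal to Remark~\ref{in-or-out}: the map for $\mu_x:\G'\to\G$ conjugates outgoing (rather than incoming) arrows of $\G$, which differs from the original map only by a global conjugation by $s_x$. Your proof unpacks this into the explicit identity $\phi(t_i'')=s_xs_is_x$ and the resulting formula $\phi'=\phi^{-1}\circ\operatorname{conj}_{s_x}$, which is precisely what that remark encodes.
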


\begin{proof}
The lemma follows from Remark~\ref{in-or-out}: performing the mutation $\mu_x:\G\to \G'$ we conjugate the ends of the 
incoming arrows, while performing the mutation  $\mu_x': \G'\to \G$ we conjugate the ends of outgoing arrows.
Then the relations we need to check for $\mu_x'$ coincide with ones we need to check for $\mu_x$.


\end{proof}

\begin{lemma}
\label{inv}
If a mutation class of some diagram of affine type contains a diagram $\G$ then it contains also the diagram $\G^{op}$ obtained from $\G$ by reversing of all arrows.

\end{lemma}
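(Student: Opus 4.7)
The key observation is that the operation of reversing all arrows (``op'') commutes with any single mutation. Indeed, at the level of the underlying skew-symmetrizable matrix $B$, reversing arrows corresponds to $B\mapsto -B$, and a direct check shows $\mu_k(-B)=-\mu_k(B)$; equivalently, inspecting Figure~\ref{quivermut} one sees that global reversal preserves the property ``three vertices form an oriented cycle'', so the sign rule in $\pm\sqrt{c}\pm\sqrt{d}=\sqrt{ab}$ is respected and the updated weight produced by $\mu_x(\G^{op})$ agrees with that produced by $(\mu_x(\G))^{op}$ (the arrows at $x$ match as well: they are reversed in both constructions). Iterating, two diagrams $\G_1,\G_2$ are mutation-equivalent if and only if $\G_1^{op},\G_2^{op}$ are, so the mutation class $[\G^{op}]$ equals $\{H^{op}:H\in[\G]\}$.

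It therefore suffices to exhibit one diagram $H_0\in[\G]$ whose reverse $H_0^{op}$ also lies in $[\G]$. If $\G$ is of affine type different from $\t A_n$, the underlying affine Dynkin diagram $\t\G$ is a tree, and by Remark~\ref{a} all orientations of $\t\G$ belong to a single mutation class; any orientation $\t\G_0$ thus works, since $\t\G_0^{op}$ is again an orientation of the same tree.

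In the remaining case $\G$ is of type $\t A_{k,n-k}$, and I would take $H_0$ to be the cyclic representative from Table~\ref{fin and aff} consisting of $k$ consecutive arrows pointing one way and $n-k$ pointing the other. Reversing all arrows yields a cycle with $n-k$ and $k$ consecutive arrows in the two directions; as a labelled diagram this is obtained from $H_0$ by a cyclic relabelling of vertices, so it is isomorphic to $H_0$ itself (the mutation class $\t A_{k,n-k}$ depends only on the unordered pair $\{k,n-k\}$), and in particular $H_0^{op}\in[\G]$. The only point requiring any care is the compatibility of mutation with op in the non-simply-laced situation, but this reduces to the observation that global reversal of arrows sends each oriented $3$-cycle to an oriented $3$-cycle (in the opposite sense), so the orientation-dependent sign in the mutation rule of Figure~\ref{quivermut} is preserved and the verification is routine.
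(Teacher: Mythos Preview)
Your proof is correct and follows essentially the same route as the paper: both reduce to a canonical representative (an acyclic orientation, or the two-change cycle for $\widetilde A_{p,q}$) and rely on the fact that global arrow reversal commutes with mutation --- you make this explicit via $\mu_k(-B)=-\mu_k(B)$, while the paper uses it implicitly in the ``mutate back to $\G^{op}$'' step. The only minor variation is in type $\widetilde A_{k,n-k}$: the paper reverses the cycle by sink/source mutations, whereas you simply note that $H_0^{op}\cong H_0$ as unlabelled diagrams (the relabelling is a reflection of the cycle rather than a cyclic shift, but the conclusion stands).
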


\begin{proof}
First, we mutate $\G$ to the acyclic form (or to a cycle with only two changes of the directions in the case of $\widetilde A_{p,q}$).
For the acyclic representatives (and for the cycle with two changes of orientation) one can reverse all arrows by sink/source mutations. Last, one mutates back to $\G^{op}$.

\end{proof}

\begin{lemma}
\label{in and out}
Assume that for every pseudo-cycle $\P$ the following two assumptions hold:
\begin{itemize}
\item[{\rm{(C1)}}] 
for every $x\in \P$  the group $W_{\P}$ is isomorphic to $W_{\mu_x(\P)}$ via the transformation~$(*)$;   
\item[{\rm{(C2)}}] 
for every connected diagram $\R=x\cup \P$ such that $\R\subset \G_1$ for some diagram $\G_1$ in the mutation class of $\G$ 
the group  $W_{\R}$ is isomorphic to $W_{\mu_x(\R)}$ via the transformation~$(*)$. 
\end{itemize}

Then the group  $W_{\G}$ is invariant under all mutations.

\end{lemma}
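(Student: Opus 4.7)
The plan is to establish, for every $\G$ in the mutation class and every vertex $x\in\G$, the existence of a well-defined isomorphism $\varphi\colon W_{\mu_x(\G)}\to W_\G$ induced by the substitution
$$
s_i'=\begin{cases} s_xs_is_x & \text{if there is an arrow } i\to x \text{ in }\G,\\ s_i & \text{otherwise.}\end{cases}
$$
Chaining these isomorphisms along any mutation sequence will then give the desired global invariance. The elements $s_i'$ manifestly generate $W_\G$ (using $s_i=s_xs_i's_x$ in the conjugated case and $s_x=s_x'$), so once $\varphi$ is known to be well-defined it is automatically surjective; Lemma~\ref{one way} applied to the reverse mutation then supplies a homomorphism in the opposite direction that inverts $\varphi$ on every generator (since $\mu_x^2=\mathrm{id}$ on diagrams), making $\varphi$ an isomorphism.

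Well-definedness reduces to checking that the image under $\varphi$ of each defining relation of $W_{\mu_x(\G)}$ is trivial in $W_\G$. Every such defining relation is supported on some pseudo-cycle $\P'\subset\mu_x(\G)$, and I would organise the verification by the position of $x$ relative to $\P'$.

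If $x\in\P'$, hypothesis~(C1) applied to $\P'$ sitting inside $\mu_x(\G)$ yields the isomorphism $W_{\P'}\cong W_{\mu_x(\P')}$ realised by $s_i\mapsto s_i'$; the relation therefore descends to a trivial identity in $W_{\mu_x(\P')}\subset W_\G$. If $x\notin\P'$ but $\R=\P'\cup\{x\}$ is connected, hypothesis~(C2) applied to $\R$ inside $\mu_x(\G)$ provides the analogous isomorphism $W_\R\cong W_{\mu_x(\R)}$, and the defining relation of $W_{\P'}$, being also a defining relation of $W_\R$, passes to an identity in $W_{\mu_x(\R)}\subset W_\G$. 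If $x\notin\P'$ and $x$ is not adjacent to any vertex of $\P'$, the mutation $\mu_x$ leaves $\P'$ unchanged and $s_i'=s_i$ for all $i\in\P'$, so $\varphi$ fixes the relation entirely. The only remaining configuration is a disconnected pseudo-cycle, necessarily a pair $\{i,j\}$ carrying an (R2) commutation relation with $x$ adjacent to exactly one of them; here $\varphi(r)=e$ follows from a short direct computation using that the non-adjacent vertex commutes with both $s_x$ and with its partner in $W_\G$ (the latter since the absence of an $i$--$j$ arrow in $\mu_x(\G)$ forces the same absence in $\G$).

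The main obstacle I anticipate is the bookkeeping: one must confirm that the isomorphisms granted by (C1) and (C2) are \emph{concretely} realised by the substitution $s_i\mapsto s_i'$ and not merely abstract isomorphisms of groups, and one must verify that pseudo-cycles exhaust the supports of all defining relations so that the case analysis is complete. The latter is immediate from the definition of pseudo-cycle; the former is what the subsequent verifications of (C1) and (C2) for concrete small diagrams will actually establish by explicit mutation-substitution computations.
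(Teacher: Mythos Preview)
Your proof is correct and follows essentially the same approach as the paper: both reduce the invariance to checking, for each pseudo-cycle $\P$ and vertex $x$, that the relation supported on $\P$ survives the explicit substitution $s_i\mapsto s_i'$, splitting into the cases $x\in\P$, $x$ adjacent to $\P$, and $x$ disjoint from $\P$, with Lemma~\ref{one way} supplying the reverse direction. Your explicit treatment of the disconnected order-$2$ pseudo-cycle (a commuting pair $\{i,j\}$ with $x$ adjacent to only one of them) and your remark that (C1) and (C2) must be realised by the concrete substitution rather than abstract isomorphisms are small but valid refinements that the paper leaves implicit.
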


\begin{proof}
To prove the lemma it is sufficient to show that  $W_{\G}$ is preserved by each mutation.
Let $\G_1$ be a diagram mutation-equivalent to $\G$. Chose $x\in \G_1$ and consider $\mu_x$.  
We need to check that all relations of  $W_{\G_1}$ do follow from the relations of  $W_{\mu_x(\G_1)}$ and vice versa.
Since $\G_1$ and   $\mu_x(\G_1)$ play symmetric roles, Lemma~\ref{one way} implies that it is sufficient to show that for each 
pseudo-cycle $\P\subset \G_1$ the corresponding relation $r_\P$ follows from the relations of  $W_{\mu_x(\G_1)}$.

If $x\in \P$ then $r_\P$ follows from the relations of $W_{\mu_x(\P)}$ in view of assumption {\rm{(C1)}}.
If $x\notin \P$ and $x$ is connected to $\P$  then $r_\P$ follows from the relations of $W_{\mu_x(P\cup x)}$ in view of assumption {\rm{(C2)}}.
If $x\notin \P$ and $x$ is not connected to $\P$ then $r_\P$ is a relation of  $W_{\mu_x(\G_1)}$ with the same supporting diagram $\P$.
This proves the lemma.

\end{proof}

We will use the following refinement of Lemma~\ref{in and out}.

\begin{lemma}
\label{outside}
It is sufficient to check assumption {\rm{(C2)}} of Lemma~\ref{in and out} for connected diagrams $x\cup \P$ such that
there is at least one incoming arrow  to $x$ and at least one outgoing  arrow from $x$.

\end{lemma}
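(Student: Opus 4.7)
The plan is to show that when $x$ has all of its arrows to $\P$ oriented in the same direction (all incoming, or all outgoing), condition (C2) of Lemma~\ref{in and out} holds automatically for $\R = \P \cup \{x\}$, so only the case of mixed orientation at $x$ requires separate verification.

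First I would make a structural observation about the mutation rule: according to Figure~\ref{quivermut}, the weight of an arrow between two vertices $i, j \in \P$ can change under $\mu_x$ only when one of the directed paths $i \to x \to j$ or $j \to x \to i$ appears in $\G_1$. Under the one-directional hypothesis on arrows between $x$ and $\P$, no such path exists. Consequently $\mu_x$ leaves every arrow within $\P$ unchanged in both weight and direction, and only reverses the orientations of the arrows between $x$ and $\P$ while preserving their weights.

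Next I would check that the defining relation sets of $W_\R$ and $W_{\mu_x(\R)}$ then coincide. Relations of types (R1) and (R2) depend only on weights and not on orientations, and weights are preserved. Any pseudo-cycle supporting a relation of type (R3) or (R4) that contains $x$ would place $x$ on an oriented cycle (inspecting Table~\ref{add-t}, each supporting subdiagram of a type (R4) relation contains an oriented cycle through every one of its vertices), which contradicts the one-directional hypothesis; the pseudo-cycles disjoint from $x$ are supported on subdiagrams of $\P$, which $\mu_x$ does not touch.

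Finally I would invoke Remark~\ref{in-or-out}. In the all-incoming case, I choose the alternative convention in which $s_i'$ is conjugated by $s_x$ only for outgoing arrows $i \gets x$; then $s_i' = s_i$ for every $i \in \P$ and $s_x' = s_x$, so the map from generators of $W_{\mu_x(\R)}$ to $\{s_i'\}$ is the identity on generators, and by the previous step sends the defining relations of one presentation to the defining relations of the other. The symmetric all-outgoing case is handled by the standard convention. I do not anticipate a real obstacle in this argument: the lemma reduces to a bookkeeping check built directly from the mutation rule and Remark~\ref{in-or-out}, and the only subtle point is making sure the possible supports of type~(R4) relations are indeed ruled out when $x$ is a sink or a source within $\R$.
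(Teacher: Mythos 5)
Your argument is correct and essentially identical to the paper's: when $x$ is a source or sink of $\R=x\cup\P$, the mutation $\mu_x$ fixes $\P$ and (under the convention of Remark~\ref{in-or-out}) the generators, and $x$ lies on no pseudo-cycle of order at least $3$, so no relation changes; the paper handles the sink case by appealing to Lemma~\ref{one way} rather than by switching the conjugation convention, but that lemma is itself just Remark~\ref{in-or-out}. One small caution on your parenthetical: in the $\widetilde D_n$ and $\widetilde B_n$ diagrams of Table~\ref{add-t} the vertices on the long non-oriented cycle do not lie on any \emph{chordless} oriented cycle, so you should either allow oriented cycles with chords there or replace the claim by the equivalent observation that no vertex of a Table~\ref{add-t} diagram is a source or a sink (a property inherited by $x$ in every subdiagram of $\R$ containing it).
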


\begin{proof}
Suppose that $x$ is incident  in $\R=x\cup \P$ to outgoing arrows only (i.e., $x$ is a {\it source} of $\R$). Then $\mu_x$ does not change neither the subdiagram $\P$ nor the generators $s_i$ corresponding to $\P$, so $\P$ determines the same relation for both groups. Further, $x$ is not contained in any oriented cycle in $\R$. Therefore, no pseudo-cycle of order at least $3$ in $\R$ contains $x$, so no relation is changed after mutation $\mu_x$. Thus, $W_{\R}$ is isomorphic to  $W_{\mu_x(\R)}$.

If $x$ is incident  in $\R=x\cup \P$ to incoming arrows only (i.e., $x$ is a {\it sink} of $\R$), then we apply first $\mu_x$ and then use Lemma~\ref{one way} together with the result of the paragraph above. 


\end{proof}

\begin{defin}[Risk diagram]
Let $\P$ be a pseudo-cycle and let $\R=x\cup \P$ satisfy the condition of Lemma~\ref{outside}, i.e. $x$ is neither sink nor source of $\R$. Then we call $\R$ a {\it risk diagram}. We call $\R$ a {\it risk diagram for} $\G$ if $\R$ is a risk diagram and  $\R$ is a subdiagram of some diagram mutation-equivalent to $\G$. 

\end{defin}

Now we can reformulate our task using the definitions above. According to Lemma~\ref{in and out}, to show invariance of $W_{\G}$ under mutations we need to verify whether {\rm{(C1)}} holds for every pseudo-cycle and whether {\rm{(C2)}} holds for every risk diagram for $\G$. We will refer to assumptions {\rm{(C1)}} and {\rm{(C2)}} from Lemma~\ref{in and out} as to {\it Condition} {\rm{(C1)}} and {\it Condition} {\rm{(C2)}}, or simply {\rm{(C1)}} and {\rm{(C2)}}.

The following lemma is evident.

\begin{lemma}
\label{order}
Suppose that $\R$ is a risk diagram for $\G_1$, and  $\G_1\subset\G_2$. If condition {\rm{(C2)}} holds for $\R$ as a risk diagram for $\G_1$, then {\rm{(C2)}}  holds for $\R$ as a risk diagram for $\G_2$.                 
\end{lemma}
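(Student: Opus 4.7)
The plan is to note that condition (C2), once spelled out for a fixed pair $(\R,x)$, asserts only that $W_{\R}$ is isomorphic to $W_{\mu_x(\R)}$, and this is an intrinsic statement about the diagram $\R$ that makes no reference to any ambient mutation class. Consequently, the content of Lemma~\ref{order} reduces to the purely combinatorial claim that every risk diagram for $\G_1$ is automatically a risk diagram for $\G_2$.

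To establish this combinatorial claim, I would first unfold the definition: saying that $\R$ is a risk diagram for $\G_1$ means that there is some $\G_1^\prime$ in the mutation class of $\G_1$ with $\R\subset \G_1^\prime$. Write $\G_1^\prime=\mu_{x_k}\circ\cdots\circ\mu_{x_1}(\G_1)$ and apply exactly the same sequence of mutations to $\G_2$, obtaining $\G_2^\prime:=\mu_{x_k}\circ\cdots\circ\mu_{x_1}(\G_2)$, which lies in the mutation class of $\G_2$.

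The key observation is that diagram mutation restricts to subdiagrams in the following sense: if $Y\subset Z$ and $v\in Y$, then $\mu_v(Y)$ coincides with the full subdiagram of $\mu_v(Z)$ on the vertices of $Y$. This is immediate from the mutation rule recalled in Figure~\ref{quivermut}: the label and direction of the arrow between vertices $i,j\in Y$ after mutating at $v$ depends only on the arrows $i\to v$ and $v\to j$, both of which are already present in $Y$. Iterating along the sequence $x_1,\dots,x_k$ (all of which lie in $\G_1\subset\G_2$) gives $\G_1^\prime\subset\G_2^\prime$, and therefore $\R\subset\G_1^\prime\subset\G_2^\prime$, which is precisely the statement that $\R$ is a risk diagram for $\G_2$.

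Combining the two steps, the isomorphism $W_{\R}\cong W_{\mu_x(\R)}$ asserted by (C2) for $\R$ viewed as a risk diagram for $\G_1$ is literally the same isomorphism required by (C2) for $\R$ viewed as a risk diagram for $\G_2$, so nothing further needs to be checked. The only point that requires any justification at all is the stability of subdiagram inclusion under mutation, and this is built into the definition; accordingly, as the authors note, the statement is evident.
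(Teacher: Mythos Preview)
Your proposal is correct. The paper offers no argument beyond the single sentence ``The following lemma is evident,'' so there is nothing to compare against; your write-up simply unpacks why the statement is evident---namely, that (C2) for a fixed $(\R,x)$ is an intrinsic isomorphism $W_{\R}\cong W_{\mu_x(\R)}$ independent of any ambient diagram, and that subdiagram inclusion is stable under mutation at a common vertex---which is exactly the reasoning the authors are tacitly invoking.
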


\subsection{Checking condition {\rm{(C1)}}}

Condition {\rm{(C1)}} for pseudo-cycles of size 1 or 2 (i.e. corresponding to the relations of types (R1) and (R2)) is evident.
We will first check pseudo-cycles corresponding to the relations of type (R4) (see Lemma~\ref{condition1}) and then consider ones corresponding to the relations of type (R3) (Lemma~\ref{rel-cycles}).

\begin{lemma}
\label{condition1}
Condition {\rm{(C1)}} holds for all five pseudo-cycles corresponding to additional affine relations.

\end{lemma}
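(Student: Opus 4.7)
The approach is direct: for each of the five pseudo-cycles $\P$ listed in Table~\ref{add-t} and for each vertex $x\in\P$, I compute the mutated diagram $\P' = \mu_x(\P)$ and verify that the defining relations of $W_\P$ hold in $W_{\P'}$ under the substitution $s_i \mapsto s_i'$ (where $s_i' = s_x s_i s_x$ if $i\to x$ in $\P$, and $s_i' = s_i$ otherwise). By Lemma~\ref{one way}, this one-sided check suffices. The labour can be drastically reduced by combining three ingredients: the diagram symmetries recorded in Section~\ref{redundance}, which allow one to pick one representative from each orbit of $\operatorname{Aut}(\P)$ on the vertex set; Lemma~\ref{outside}, which immediately disposes of sink and source mutations; and the redundancy results on cycle relations, which reduce each oriented cycle to a single preferred relation of type (R3).

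For the three ``small'' pseudo-cycles $\t A_{2,2}$, $\t B_3$ and $\t G_2$, the remaining cases are finite in number. After computing $\P'$ explicitly, Lemma~\ref{cycles} tells me exactly which oriented cycles appear in $\P'$, so in advance I know which new relations of type (R3) and which new affine relations of type (R4) are available in $W_{\P'}$. The required identities then reduce to word manipulations of bounded length, to be verified by repeated application of (R1)--(R3) together with the relevant relations from Table~\ref{add-t}.

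For the two infinite families $\t D_n$ and $\t B_n$, the uniform approach is geometric, using the surface/orbifold realisations recorded in Table~\ref{surface realizations}: $\t D_n$ arises from a disk with two punctures and $\t B_n$ from a disk with one puncture and one orbifold point. A mutation of $\P$ is a flip of the underlying triangulation, and up to symmetries of $\P$ there are only a few essentially different positions for the vertex $x$: the branching vertex, a vertex of the tail adjacent to the branch, and an internal vertex of the tail. In each case the local picture after the flip is independent of $n$, and the portion of the tail untouched by the flip carries its relations through an obvious induction, reducing the infinite family to finitely many local computations.

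The main obstacle is the $\t B_3$ pseudo-cycle, which carries two distinct additional affine relations simultaneously and whose mutations can produce cycles of several different mutation types (e.g.\ $\t B_2$, $\t C_2$, $\t B_3$; cf.\ Table~\ref{short cycles}). Tracking how both affine relations and the new cycle relations combine under the substitution $s_i \mapsto s_i'$ requires careful word-level manipulation, and here the surface shortcut is unavailable because of the low vertex count. All other cases either admit a direct geometric interpretation or collapse to a short explicit calculation.
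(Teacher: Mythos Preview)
Your approach—direct verification case by case, reduced via symmetries and the redundancy results on cycle relations—is exactly what the paper does; the paper simply declares the computation straightforward and illustrates one case (the $\t G_2$ cycle under $\mu_2$, Example~\ref{ex-gt2}). Two points in your write-up need correction, though neither undermines the overall strategy.

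First, Lemma~\ref{outside} is stated and proved for condition {\rm{(C2)}}, i.e.\ for a vertex $x\notin\P$; it does not apply to {\rm{(C1)}}, where $x\in\P$. In any case the reduction is vacuous here: in each of the five pseudo-cycles in Table~\ref{add-t} every vertex lies on an oriented chordless cycle, so none of them has a sink or a source.

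Second, your description of the $\t D_n$ and $\t B_n$ pseudo-cycles as consisting of a ``branching vertex'' and a ``tail'' is the shape of the acyclic Dynkin diagram, not of the diagrams actually listed in Table~\ref{add-t}. The $\t D_n$ entry there is the diagram of Fig.~\ref{l-dn}: a non-oriented $(n-1)$-cycle (one arrow $ab$ reversed) together with two extra vertices $u,v$, each joined to both $a$ and $b$, forming two oriented triangles on the edge $ab$. The $\t B_n$ entry is analogous with a single extra vertex carrying the weight-$2$ arrows (cf.\ Fig.~\ref{l9}). Your inductive/local idea still applies—up to symmetry the essentially different choices of $x$ are the attached vertices $u,v$, the high-degree vertices $a,b$, a cycle vertex adjacent to one of $a,b$, and a generic cycle vertex—but the computation must be carried out on these diagrams, not on the Dynkin trees.
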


The proof of the  lemma is a straightforward computation, we illustrate it by the following example.

\begin{example}
\label{ex-gt2}
Let us show that the mutation $\mu_2$ preserves the group $W_\G$ for the diagram $\G$ shown in Fig.~\ref{ex-g}. 

First, we write down the groups:
\begin{multline*}
W_\G= \langle s_1,s_2,s_3\ |\ e=s_i^2=(s_1s_2)^6=(s_2s_3)^6=\\ =(s_1s_2s_3s_2)^3=(s_2s_3s_1s_3)^6=(s_3s_1s_2s_1)^6=
(s_2s_1s_2s_1s_2s_3)^2 \rangle
\end{multline*}
and 
$$
W_{\mu_2(\G)}=\langle t_1,t_2,t_3 \ |\ e=t_i^2=(t_1t_3)^3=(t_1t_2)^6=(t_2t_3)^6=(t_2t_3t_1t_3)^2=(t_3t_1t_2t_1)^2   \rangle 
$$
where 
$$
t_1=s_1, \ t_2=s_2 \text{\quad   and \quad } t_3=s_2s_3s_2.\eqno (*) 
$$

We need to show that all relations of $W_{\mu_2(\G)}$ follow from the relations of $W_\G$
and equalities~$(*)$, as well as all relations of $W_{\G}$ follow from the relations of $W_{\mu(\G)}$
and equalities~$(*)$.
Let us check first that $(t_3t_1t_2t_1)^2=e $:

$$(t_3t_1t_2t_1)^2  \stackrel{(*)}{=} (s_2s_3s_2s_1s_2s_1)^2=s_2(s_3s_2s_1s_2s_1s_2)^2s_2
 \stackrel{(s_2s_1s_2s_1s_2s_3)^2=e}{=} e.
$$
Similarly,
\begin{multline*} (s_3s_1s_2s_1)^6   \stackrel{(*)}{=}  (t_2t_3t_2t_1t_2t_1)^6   \stackrel{(t_1t_2)^6=e}{=} 
t_2 (t_3 t_1t_2t_1t_2t_1t_2t_1)^6 t_2   \stackrel{(t_3t_1t_2t_1)^2=e}{=}\\ =t_2 ( t_1t_2t_1t_3t_2t_1t_2t_1)^6 t_2=
t_2t_1t_2t_1(t_3t_2)^6t_1t_2t_1t_2   \stackrel{(t_3t_2)^6=e}{=} e
\end{multline*}

All the other relations are checked similarly or even easier.

\end{example}

\begin{figure}[!h]
\begin{center}
\psfrag{G}{\small $\G$}
\psfrag{muG}{\small $\mu_2(\G)$}
\psfrag{mu2}{\scriptsize $\mu_2$}
\psfrag{1}{\scriptsize $1$}
\psfrag{2}{\scriptsize $2$}
\psfrag{3}{\scriptsize $3$}
\psfrag{3_}{\scriptsize $3$}
\psfrag{4}{\scriptsize $4$}
\epsfig{file=./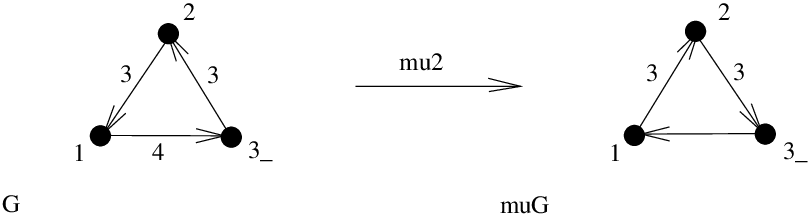,width=0.5\linewidth}
\caption{Notation for Example~\ref{ex-gt2}}
\label{ex-g}
\end{center}
\end{figure}

\begin{lemma}
\label{rel-cycles}
Condition {\rm{(C1)}} holds for a pseudo-cycle forming an oriented cycle.

\end{lemma}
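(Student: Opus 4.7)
The plan is to combine the classification of oriented chordless cycles in mutation-finite diagrams provided by Lemma~\ref{cycles} with Corollary~\ref{cycles-af} to reduce the problem to finitely many explicit small cases. Simply-laced oriented cycles are handled by Lemma~4.1 of \cite{BM}, and the finite-type non-simply-laced cycles of types $B_3$ and $F_4$ (rows~2 and~8 of Table~\ref{short cycles}) by Lemmas~4.2 and~4.4 of the same reference. In the affine setting it therefore suffices to verify (C1) for the six non-simply-laced oriented cycles listed in Table~\ref{cycles-t}: $\widetilde A_{2,1}$, the 3-cycle realizing $\widetilde B_2$ or $\widetilde C_2$, the 4-cycle of type $\widetilde B_3$, the 5-cycle of type $\widetilde F_4$, and the two $\widetilde G_2$ 3-cycles.

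For each such cycle $\C$ and each vertex $x\in\C$, the mutated diagram $\mu_x(\C)$ is acyclic, since $x$ becomes a source or a sink; thus $\mu_x(\C)$ is a path or an affine Dynkin tree of the same mutation type as $\C$. The verification of (C1) then proceeds exactly as in Example~\ref{ex-gt2}: one sets $s_i'=s_xs_is_x$ for every incoming arrow $i\to x$ and $s_i'=s_i$ otherwise, and checks that every defining relation of $W_{\mu_x(\C)}$ is a consequence of the relations of $W_\C$, and symmetrically that every defining relation of $W_\C$ follows from the relations of $W_{\mu_x(\C)}$ under the inverse substitution. This yields inverse surjections and hence an isomorphism. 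The redundancy results of Section~\ref{redundance} permit us to restrict attention to one cycle relation per cycle (with the exception that both additional affine relations for the $\widetilde G_2$ case must be treated separately), and Lemma~\ref{reversion} lets us fix a convenient orientation of $\C$, substantially reducing the book-keeping.

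The principal technical difficulty arises in the $\widetilde B_3$, $\widetilde F_4$ and $\widetilde G_2$ cases, where the acyclic diagrams $\mu_x(\C)$ themselves carry additional affine relations from Table~\ref{add-t}. One must then verify that these additional affine relations of $W_{\mu_x(\C)}$ are consequences of the cycle relation of $W_\C$, and conversely that the cycle relation of $W_\C$ follows from the presentation of $W_{\mu_x(\C)}$. These are finite word problems in small Coxeter-type groups and are solved by systematic application of (R1), (R2) and the braid relations, in the manner illustrated by the computation in Example~\ref{ex-gt2}. The $\widetilde G_2$ cycle with weights $3,3,4$ is expected to be the most delicate, since it combines a non-trivial cycle relation with two additional affine relations and an arrow of weight $4$; nonetheless the computation remains elementary and purely mechanical once the substitution has been spelled out.
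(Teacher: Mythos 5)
Your overall strategy coincides with the paper's: reduce via Lemma~\ref{cycles} (and Corollary~\ref{cycles-af}) to the simply-laced case, which is covered by~\cite{BM}, plus finitely many explicit weighted oriented cycles, and then verify {\rm{(C1)}} for each of these by the substitution $s_i'=s_xs_is_x$ exactly as in Example~\ref{ex-gt2}. Up to that point the proposal matches the paper.

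However, the structural claim on which you hang the whole verification is false. You assert that for every vertex $x$ of an oriented chordless cycle $\C$ the diagram $\mu_x(\C)$ is acyclic ``since $x$ becomes a source or a sink'', and hence is a path or an affine Dynkin tree. Neither part is correct: after mutating at $x$ the two arrows incident to $x$ simply reverse, so $x$ still has one incoming and one outgoing arrow; and $\mu_x(\C)$ is in general not acyclic. Mutating the oriented $4$-cycle of type $\widetilde B_3$ (row~6 of Table~\ref{short cycles}) at any vertex produces a diagram containing two oriented triangles glued along an arrow; mutating the $\widetilde A_{2,1}$ triangle at an endpoint of its weight-$4$ arrow returns an oriented triangle of the same type; and the paper's own Example~\ref{ex-gt2} mutates one oriented $\widetilde G_2$ triangle into the other oriented $\widetilde G_2$ triangle --- observe that $W_{\mu_2(\G)}$ there contains the cycle relations $(t_2t_3t_1t_3)^2=(t_3t_1t_2t_1)^2=e$, which arise only from oriented cycles. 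Consequently the list of relations you propose to check for $W_{\mu_x(\C)}$ (relations of an acyclic diagram together with its additional affine relations from Table~\ref{add-t}) is simply not the correct presentation in several of the cases that {\rm{(C1)}} requires; one must instead verify the cycle relations of type (R3) for the oriented cycles actually present in $\mu_x(\C)$, and, where relevant, the additional affine relations attached to an oriented subdiagram. The computation remains finite and elementary once the correct $\mu_x(\C)$ is written down, so the argument is repairable, but as written the step describing $\mu_x(\C)$ would fail and would lead you to verify the wrong set of relations.
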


\begin{proof}
In view of Lemma~\ref{cycles} a mutation-finite oriented cycle is either a simply-laced oriented cycle (finite type $D_n$)
or one of the cycles shown in Table~\ref{short cycles}. In the former case the statement follows from~\cite{BM}, in the latter case we check {\rm{(C1)}} straightforwardly (applying computation similar to one in Example~\ref{ex-gt2}). 

\end{proof}

Summarizing Lemmas~\ref{condition1} and~\ref{rel-cycles} we obtain the following corollary.
\begin{cor}
\label{all-ps}
Condition {\rm{(C1)}} holds for any pseudo-cycle in a diagram of affine type.

\end{cor}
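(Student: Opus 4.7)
The statement is essentially a bookkeeping summary: Condition (C1) must be verified case by case according to which type of relation the pseudo-cycle supports, and the substantial cases have already been disposed of. The plan is to enumerate the four possible types of pseudo-cycle — supports of relations (R1), (R2), (R3), (R4) — and invoke the appropriate tool in each case.

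First I would dispatch the (R1) case: a pseudo-cycle supporting (R1) is a single vertex $x$, and $\mu_x$ leaves the one-vertex subdiagram unchanged, so Condition (C1) is vacuous. Next, a pseudo-cycle supporting (R2) is a two-vertex subdiagram $\{x,y\}$ joined by a single arrow; mutating at $x$ or $y$ only reverses the orientation of this arrow, and since there is no third vertex available to trigger the weight-changing rule from Figure~\ref{quivermut}, the weight (and hence $m_{xy}$) is preserved. Consequently the relation $(s_xs_y)^{m_{xy}}=e$ is the same before and after mutation.

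It remains to handle pseudo-cycles of type (R3), i.e.\ oriented chordless cycles, and type (R4), i.e.\ the five affine subdiagrams displayed in Table~\ref{add-t}. These are exactly the two classes covered by Lemma~\ref{rel-cycles} and Lemma~\ref{condition1} respectively. Combining the four cases yields the corollary.

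Since the genuinely computational content lives in Lemmas~\ref{condition1} and~\ref{rel-cycles}, there is no additional obstacle at the level of the corollary itself; the only thing to watch is that the case analysis is exhaustive, which follows immediately from the definition of pseudo-cycle (its vertex set is the support of some relation, and relations come only in the four flavours (R1)--(R4)).
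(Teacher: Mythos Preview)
Your proof is correct and follows exactly the paper's approach: the paper notes that (C1) for pseudo-cycles of size $1$ or $2$ (relations (R1) and (R2)) is evident, and then states the corollary as a direct summary of Lemma~\ref{condition1} (handling (R4)) and Lemma~\ref{rel-cycles} (handling (R3)). Your write-up simply makes the trivial (R1) and (R2) cases explicit, which is fine.
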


\subsection{Condition {\rm{(C2)}} for small risk diagrams }

There are no risk diagrams $\R=x\cup \P$ with pseudo-cycles $\P$ of order $1$. In this section we check {\rm{(C2)}} for all risk diagrams with pseudo-cycles of order $2$.

\begin{lemma}
Let $\P$ be a pseudo-cycle of order $2$, and let $\R=x\cup \P$ be a risk diagram for an affine diagram $\G$. Then {\rm{(C2)}} holds for $\R$.

\end{lemma}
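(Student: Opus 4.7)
The plan is to show that at least one of $\R$ and $\mu_x(\R)$ is an oriented $3$-cycle, and then to invoke condition {\rm{(C1)}} for pseudo-cycles (Corollary~\ref{all-ps}) to conclude that $W_{\R}\cong W_{\mu_x(\R)}$.

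Write $\P = \{i,j\}$. The risk diagram hypothesis forces $x$ to be neither a sink nor a source of $\R$; since $\{i,j\}$ is its only possible neighbourhood in $\R$, the vertex $x$ must be joined to both $i$ and $j$ by arrows of opposite directions, say $i\to x$ and $x\to j$. Depending on whether the edge of $\P$ has weight $0$, $1$, $2$, or $3$, the diagram $\R$ is either a path of length two (if $\P$ carries no edge) or a triangle whose $(i,j)$-arrow has weight $1$, $2$, or $3$. In the triangle case, $\R$ is an oriented $3$-cycle precisely when the $(i,j)$-arrow points from $j$ to $i$, and is acyclic otherwise.

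The key observation is that mutation at $x$ flips the two arrows incident to $x$, and therefore swaps the oriented and acyclic cases for the triangle. In the path case (where $c=0$) the mutation rule of Figure~\ref{quivermut} creates a new $(i,j)$-arrow whose weight and orientation are determined by the sign convention: the sign before $\sqrt{c}$ is negative, which forces the sign before $\sqrt{d}$ to be positive, i.e.\ the new triangle is oriented. Hence in every situation at least one of $\R$ and $\mu_x(\R)$ is an oriented $3$-cycle lying in a diagram mutation-equivalent to $\G$.

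By Lemma~\ref{subd of aff}, any oriented $3$-cycle contained in an affine diagram is of finite or affine type, and Lemma~\ref{cycles} together with Corollary~\ref{cycles-af} list all such oriented triangles. A direct computation of $t(l)$ for each entry shows that at least one value of $l$ satisfies $t(l)<4$, so an (R3) relation is supported and the oriented $3$-cycle is a pseudo-cycle. Applying Corollary~\ref{all-ps} then yields $W_{\R}\cong W_{\mu_x(\R)}$: directly when $\R$ itself is oriented, and via the involutivity $\mu_x^2 = \mathrm{id}$ (Lemma~\ref{one way}) applied to the pseudo-cycle $\mu_x(\R)$ in the remaining cases. The main obstacle I anticipate is precisely this last verification: the only oriented triangle appearing in Lemma~\ref{cycles} for which every $t(l)$ equals $4$ is the $(4,4,4)$ cycle coming from the punctured torus, and this one is excluded from an affine diagram by Lemma~\ref{subd of aff} since its mutation type is neither finite nor affine.
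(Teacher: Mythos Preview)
Your proof is correct and follows the same underlying idea as the paper's: reduce to condition {\rm{(C1)}} for an oriented $3$-cycle. The paper organises the argument by first treating the case where $\R$ itself is oriented, and then, for non-oriented $\R$, either quoting \cite{BM} when $\R$ is of finite type or mutating $\R$ into an oriented cycle in the three remaining affine cases ($\widetilde A_{2,1}$, $\widetilde C_2$, $\widetilde G_2$). You instead give a single uniform argument showing that one of $\R$, $\mu_x(\R)$ is always an oriented triangle, which is slightly cleaner. One remark: your phrase ``swaps the oriented and acyclic cases'' is not literally correct --- mutating an oriented triangle at $x$ can produce another oriented triangle (e.g.\ the $(2,2,1)$ cycle of type $B_3$) rather than a path --- but only the implication ``acyclic $\Rightarrow$ oriented after $\mu_x$'' is needed, and that one does follow from the sign rule exactly as you argue in the $c=0$ case. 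Your explicit check that the resulting oriented triangle carries an {\rm(R3)} relation (i.e.\ has some $t(l)<4$, the $(4,4,4)$ torus cycle being the only excluded case) is a detail the paper leaves implicit.
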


\begin{proof}
%
First, suppose that $\R$ is an oriented cycle. Then $\R$ itself is a pseudo-cycle, and condition {\rm{(C2)}} for $\R$ becomes condition {\rm{(C1)}} for pseudo-cycles checked in Lemma~\ref{rel-cycles}.

Now assume that $\R$ is not an oriented cycle. Since $\R$ is a subdiagram of a diagram of affine type and contains three vertices only, it is easy to see that $\R$ is either a diagram of finite type, or a simply-laced non-oriented cycle, or a diagram of type $\widetilde C_2$ or $\widetilde G_2$.
In the former case we use results  of~\cite{BM}, in all the other cases we perform the mutation $\mu_x$ 
($x$ can be assumed to be the only vertex of $\R$ incident to both incoming and outgoing arrows)
and get an oriented cycle. Applying Lemma~\ref{one way} we obtain the statement of the lemma. 

\end{proof}

\subsection{Condition {\rm{(C2)}} for $\widetilde A_n$}
\label{sec a}

\begin{lemma}
\label{A-}
Condition {\rm{(C2)}} holds for the risk diagram $\R=x\cup \P$ shown in Fig.~\ref{l4}.

\end{lemma}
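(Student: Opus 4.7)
The plan is to apply the standard mutation substitution rule from the beginning of Section~\ref{invariance}: set $s_i' = s_xs_is_x$ for every arrow $i\to x$ of $\R$ and $s_i' = s_i$ otherwise, and then verify that each defining relation of $W_{\mu_x(\R)}$, written in the $s_i'$, holds in $W_\R$. By Lemma~\ref{one way} this one direction is enough, and by Lemma~\ref{outside} I may assume throughout that $x$ is neither a sink nor a source of $\R$, so exactly the generators at the endpoints of incoming arrows get conjugated.

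First I would read off from Figure~\ref{l4} the complete list of pseudo-cycles supported on $\R$ and on $\mu_x(\R)$. Besides $\P$ itself, these will include the short oriented cycles that $x$ forms with consecutive vertices of $\P$ after mutation; since $\R$ lives inside a diagram mutation-equivalent to $\widetilde A_n$, Lemma~\ref{cycles} guarantees these cycles are simply-laced, so in both $\R$ and $\mu_x(\R)$ the only relations to verify are those of types (R1), (R2), (R3), and possibly the (R4) relation of type $\widetilde A_{2,2}$ if $\P$ or $\P'=\mu_x(\P)$ happens to be such a pseudo-cycle. Relations of types (R1) and (R2) are immediate since the substitution is a conjugation on each generator and commutation/braid relations are preserved. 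The genuine content lies in the (R3) relations carried by the new oriented cycles through $x$ and in the relation supported by $\P'$.

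For each short oriented cycle of $\mu_x(\R)$ through $x$, I would rewrite its cycle relation in the $s_i$ using the substitution $s_i'=s_xs_is_x$ and simplify, checking that the resulting word is a consequence of the cycle relations already present in $W_\R$ together with (R1)--(R2); these are the same short calculations that underlie Lemmas~4.1--4.4 of~\cite{BM} and Lemma~\ref{rel-cycles} here, and they proceed by repeatedly applying $s_x^2=e$ and the braid relations for edges of $\R$. For the relation supported on $\P'$ I would use the fact that $\P'$ differs from $\P$ only at the two vertices of $\P$ adjacent to $x$, so the substitution alters at most two letters of the defining word and the remaining letters commute past $s_x$ by the commutation relations $(s_is_j)^2 = e$ that hold for non-adjacent vertices of $\P$.

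The main obstacle will be the relation of $\P'$ itself when $\P'$ is long: since the defining word has length $2(|\P'|-1)$, conjugations by $s_x$ must be threaded through almost the entire word, and one has to verify that the braid/commutation relations at the vertices of $\P$ adjacent to $x$ are strong enough to cancel the extra $s_x$ factors. I expect to handle this by induction on the length of $\P$: the inductive step reduces to the case of a shorter pseudo-cycle together with one of the small oriented cycles through $x$ treated in the previous paragraph, and Corollary~\ref{all-ps} lets me feed the inductive hypothesis back through condition (C1). Once the cycle relation on $\P'$ is established, any (R4) relation of type $\widetilde A_{2,2}$ involved follows by the explicit computation already done in Lemma~\ref{condition1}, performed in the style of Example~\ref{ex-gt2}.
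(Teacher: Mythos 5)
Your overall method --- substitute $s_i'=s_xs_is_x$ for the endpoints of arrows into $x$, verify the defining relations of $W_{\mu_x(\R)}$ inside $W_\R$, and invoke Lemma~\ref{one way} for the converse direction --- is exactly the approach the paper intends; its proof of Lemma~\ref{A-} is simply the assertion that this check is straightforward. However, you have misread the scope of the lemma. Figure~\ref{l4} depicts a \emph{single fixed} diagram of order at most five: it is the one configuration from Fig.~\ref{a-small} ($x$ attached to an oriented simply-laced cycle of length at most $4$) that actually embeds in a diagram of type $\widetilde A_{p,q}$ without forcing a puncture. Unlike Figs.~\ref{l6} and~\ref{l9}, whose captions explicitly allow arbitrary size, this is not a family parametrized by $|\P|$. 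So the entire final paragraph of your proposal --- threading $s_x$ through a word of length $2(|\P'|-1)$ and setting up an induction on the length of $\P$ --- addresses a difficulty that does not arise here; what is actually required is one finite computation in the style of Example~\ref{ex-gt2}, which you never perform.

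Moreover, the inductive mechanism you sketch would not be sound even where long pseudo-cycles do occur. Condition {\rm{(C1)}} (Corollary~\ref{all-ps}) concerns mutation at a vertex \emph{of} a pseudo-cycle; it provides no device for splitting a risk diagram $x\cup\P$ into a shorter pseudo-cycle together with a small cycle through $x$ and "feeding the inductive hypothesis back". Nothing in Lemmas~\ref{in and out}, \ref{outside} or \ref{rel-cycles} licenses such a reduction, and the places in the paper where risk diagrams of unbounded size genuinely appear (Lemmas~\ref{D-} and~\ref{B-}) are again handled by direct verification, not by induction on the cycle length. To repair your write-up, delete the inductive apparatus, identify the concrete diagram of Fig.~\ref{l4}, list its handful of relations before and after $\mu_x$, and carry out the substitution explicitly.
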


\begin{figure}[!h]
\begin{center}
\psfrag{x}{\scriptsize $x$}
\epsfig{file=./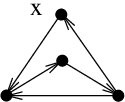,width=0.135\linewidth}
\caption{The risk diagram in Lemma~\ref{A-}}
\label{l4}
\end{center}
\end{figure}

The proof is straightforward.

\begin{lemma}
\label{lA}
Condition {\rm{(C2)}} holds for all risk diagrams $\R=x\cup\P\subset \G$ where $\G$ is mutation-equivalent to $\widetilde A_{p,q}$.
\end{lemma}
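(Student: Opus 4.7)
The plan is to classify every risk diagram $\R=x\cup\P$ that can arise inside a diagram in the mutation class of $\widetilde A_{p,q}$ and to verify condition (C2) for each by an explicit mutation computation. Since the cyclic representative of $\widetilde A_{p,q}$ is simply-laced and simple-lacedness is preserved by mutation, every diagram $\G$ in this mutation class is simply-laced, and hence every pseudo-cycle $\P\subseteq \G$ is simply-laced. By Table~\ref{subdiagrams}, the possible pseudo-cycle types in such $\G$ are: a two-vertex edge (handled by the preceding lemma), a simply-laced oriented chordless cycle (type (R3)), or an $\widetilde A_{2,2}$-type four-cycle (type (R4), only when $p,q\ge 2$). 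I would treat the two remaining families in turn.

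For (R4), if $\P=\widetilde A_{2,2}$ then $\R$ is a simply-laced connected $5$-vertex diagram containing the non-oriented $4$-cycle $\P$, and by Lemma~\ref{subd of aff} is of finite or affine type. Lemma~\ref{non-or} forces $x$ to attach to $\P$ by an even number of arrows, so $\R$ has at least $6$ edges. An inspection of the simply-laced finite and affine diagrams on $5$ vertices (the mutation classes $A_5$, $D_5$, $\widetilde A_4$, $\widetilde D_4$), combined with the specific alternating orientation required by $\widetilde A_{2,2}$, shows that no such $\R$ embeds into the $\widetilde A_{p,q}$ mutation class; thus this case is vacuous.

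For (R3), combining Lemma~\ref{cycles} with Corollary~\ref{cycles-af} shows that a simply-laced oriented chordless cycle $\P$ inside a diagram of type $\widetilde A_{p,q}$ is of finite type, since all entries of Table~\ref{cycles-t} are non-simply-laced. A similar rank argument via Lemma~\ref{subd of aff} shows that any such $\P$ of length $\ge 4$ cannot admit an external vertex $x$ attached by two or more arrows while keeping $\R$ mutation-finite; in that situation $\R=\P$ and (C2) collapses to (C1), which was verified in Lemma~\ref{rel-cycles}. This leaves $\P$ a triangle and $\R$ a connected $4$-vertex simply-laced diagram in which $x$ is neither sink nor source and is joined to $\P$ by at least two arrows. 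A direct enumeration of such $\R$'s produces a short list: one configuration is precisely the risk diagram of Lemma~\ref{A-}, and the others either reduce to it via Lemma~\ref{reversion} or follow from an essentially identical computation in the style of Example~\ref{ex-gt2}.

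The main obstacle I expect is carrying out the enumeration in the (R3) triangle case. While each individual word-manipulation identity is straightforward, the bookkeeping across all orientation patterns is finicky, and the reductions via Lemma~\ref{reversion} together with the inherent symmetries noted after Lemma~\ref{reversion} are essential to keeping the number of cases requiring hand verification manageable.
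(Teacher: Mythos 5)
Your opening premise---that simple-lacedness is preserved by mutation, so every diagram in the mutation class of $\widetilde A_{p,q}$ is simply-laced---is false, and the case analysis built on it collapses. Mutating the non-oriented triangle $1\to 2\to 3$, $1\to 3$ at the vertex $2$ produces an oriented triangle with one arrow of weight $4$; this is exactly row 1 of Table~\ref{short cycles}, which the paper records as lying in the mutation class $\widetilde A_{2,1}$, and Table~\ref{double edges} lists precisely how double arrows arise in decomposable diagrams of type $\widetilde A_n$ (gluings I+II and II+II). Consequently the pseudo-cycles in this mutation class are not all simply-laced: the oriented triangle with a weight-$4$ arrow is one of them, and the $\widetilde A_{2,2}$ pseudo-cycle of Table~\ref{add-t}---the actual support of the type (R4) relation, not the alternating non-oriented $4$-cycle you describe---itself contains an arrow of weight $4$. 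Your conclusion that the (R4) case is vacuous therefore rests on a mischaracterization of the relevant diagram, and the whole family of risk diagrams containing a double arrow (either inside $\P$ or on an arrow joining $x$ to $\P$) is left unexamined.

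The paper's proof is organized around exactly this dichotomy. When $\G$ contains a double arrow, the annulus geometry (cutting along the arc corresponding to one end of the double arrow yields a disk) forces $\G$ into one of two explicit shapes (Fig.~\ref{A}), from which one reads off that every risk diagram is contained in a subdiagram of type $A_l$ and is therefore covered by~\cite{BM} via Lemma~\ref{order}. Only in the simply-laced case does your style of argument begin to apply, and even there the paper excludes oriented cycles of length greater than $4$, and three of the four small candidate risk diagrams, by a puncture-counting argument on the surface rather than by the rank and mutation-finiteness considerations you gesture at; your claim that a length-$\ge 4$ cycle admits no external vertex $x$ is asserted, not proved, and the reduction ``$\R=\P$ so (C2) collapses to (C1)'' does not parse when no such $x$ exists. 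To repair the proposal you would need to add the double-arrow case in full and replace the unsupported exclusions in the simply-laced case with the surface argument or an equivalent.
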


\begin{proof}
 Recall that  $\widetilde A_{p,q}$ is a block-decomposable skew-symmetric diagram 
corresponding to a triangulation of an annulus with $p$ and $q$ marked points on the boundary components.

First, suppose that $\G$ has a double arrow (recall, it is an arrow labeled by $4$).
A double arrow in a subdiagram of a diagram of type $\widetilde A_{p,q}$ can arise only from two triangles glued as in Fig.~\ref{annulus} (cf. Table~\ref{double edges}).

\begin{figure}[!h]
\begin{center}
\psfrag{a}{{\scriptsize $\alpha$}}
\psfrag{1}{}
\psfrag{2}{}
\epsfig{file=./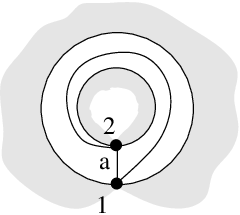,width=0.2\linewidth}
\caption{Triangulated annulus }
\label{annulus}
\end{center}
\end{figure}

Cutting the triangulation along an arc $\alpha$ corresponding to one of the ends of a double arrow we get a disk. Since the diagram of the new surface is a subdiagram of $\G$ obtained by removing the vertex corresponding to $\alpha$, this implies that 
\begin{itemize}
\item $\G$ contains at most one double arrow;
\item $\G$ looks like one of two diagrams shown in Figure~\ref{A}.
\end{itemize}

\begin{figure}[!h]
\begin{center}
\psfrag{Am}{\small $A_m$}
\psfrag{Ak}{\small $A_k$}
\epsfig{file=./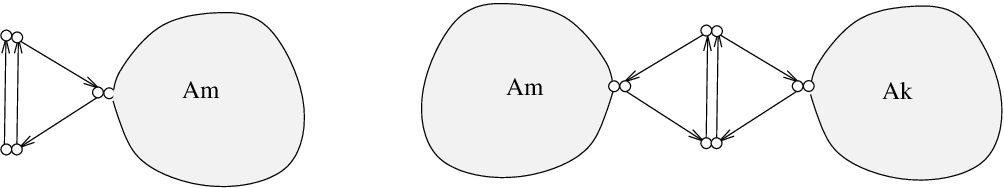,width=0.9\linewidth}
\caption{Block decomposition of diagrams of type $\widetilde A_{p,q}$ containing a double arrow. }
\label{A}
\end{center}
\end{figure}

In particular, since $\P$ is either a cycle or a pseudo-cycle of type $\widetilde A_{2,2}$ (as no other pseudo-cycle from Table~\ref{add-t} is a subdiagram of a diagram of type $\widetilde A_{p,q}$) and $x$ is connected to $\P$ by at least two arrows, every risk diagram is contained in a subdiagram of type $A_l$.
So, by  Lemma~\ref{order} and results of~\cite{BM} we see that {\rm{(C2)}} holds for all risk diagrams for $\G$.

Now, suppose that $\G$ contains simple arrows only.
In this case any pseudo-cycle is an oriented cycle $\C$. If $|\C|>4$  then the triangulated surface corresponding to $\C$ has a puncture (since the only decomposition of $\C$ in this case consists of blocks of type I), which is impossible for $\C$ being a subdiagram of a diagram of type $\widetilde A_{p,q}$.
If $|\C|\le 4$ then the decomposability of $x\cup \P$ implies that $x\cup \P$ is one of the diagrams shown in Fig.~\ref{a-small}; three of them can not be a subdiagram of a diagram of the type $\widetilde A_{p,q}$ since the corresponding surfaces have punctures, and the fourth is treated in Lemma~\ref{A-}.

\end{proof}

\begin{figure}[!h]
\begin{center}
\psfrag{x}{\scriptsize $x$}
\epsfig{file=./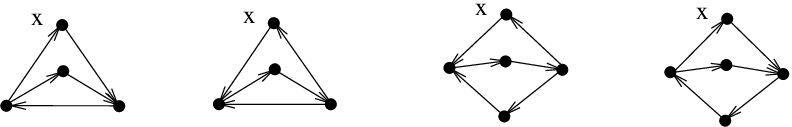,width=0.57\linewidth}
\caption{Small diagrams for the proof of Lemma~\ref{lA}: the diagram on the left is mutation-equivalent to $D_4$, the two diagrams on the right are mutation-equivalent to $\widetilde D_4$; the remaining one is checked in Lemma~\ref{A-}.}
\label{a-small}
\end{center}
\end{figure}

\subsection{Condition {\rm{(C2)}} for $\widetilde D_n$}
\label{sec d}
All the risk diagrams in this section are subdiagrams of a diagram $\G$ of type $\widetilde D_n$.

\begin{lemma}
\label{D-}
Condition {\rm{(C2)}} holds for three risk diagrams $\R=x\cup \P$ shown in Fig.~\ref{l6}.

\end{lemma}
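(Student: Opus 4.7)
The plan is to verify Condition (C2) for each of the three risk diagrams $\R=x\cup\P$ depicted in Fig.~\ref{l6} by direct computation in the style of Example~\ref{ex-gt2}. In each case, the pseudo-cycle $\P$ is either an oriented chordless cycle of $\widetilde D_n$ (handled already for its internal relation by Lemma~\ref{rel-cycles}) or one of the affine pseudo-cycles $\widetilde A_{2,2}$ or $\widetilde D_k$ listed in Table~\ref{add-t}, and the vertex $x$ is, by the definition of a risk diagram, incident to at least one incoming and one outgoing arrow of $\R$, so that mutating at $x$ actually alters several generators.

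For each of the three cases I would first write out the presentations of $W_\R$ and $W_{\R'}$, where $\R'=\mu_x(\R)$: the quadratic relations (R1), the Coxeter relations (R2), the cycle relations (R3) coming from oriented chordless cycles of $\R$ (resp.\ $\R'$), and the additional affine relations (R4) supported by $\P$ and by any new affine subdiagrams that appear after the mutation. Then, following Section~\ref{invariance}, I would introduce the candidate identification
$$
t_i = \begin{cases} s_x s_i s_x & \text{if there is an arrow } i \to x \text{ in } \R, \\ s_i & \text{otherwise,} \end{cases} \qquad t_x = s_x,
$$
and verify that every defining relation of $W_{\R'}$, rewritten through this substitution, follows from the defining relations of $W_\R$. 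Lemma~\ref{one way} then gives the reverse implication automatically, so (C2) holds.

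The routine part of each verification is, as in Example~\ref{ex-gt2}, to insert $s_x^2=e$ repeatedly, conjugate by $s_x$, and reduce the resulting word with the help of (R1), (R2), and the already-established cycle relations in $\R$ until it matches a relation available in $\R$. Before plunging into computation, I would use Lemma~\ref{reversion} to replace $\R$ by $\R^{op}$ whenever that simplifies the pattern of in/out arrows at $x$, and use Corollary~\ref{all-ps} (Condition (C1)) to cheaply handle any relation whose support lies entirely inside a pseudo-cycle of $\R$.

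The real obstacle is the transformed affine relation of type $\widetilde D_k$: after substitution, the long word $s_1 s_2 s_3 s_2 s_1\, s_4 s_5\cdots s_{n}s_{n+1}s_{n}\cdots s_4$ picks up several interleaved occurrences of $s_x$, and one has to push these $s_x$'s past the remaining letters using only commutation $(s_is_x)^2=e$ (for $i$ not joined to $x$) and braid moves $(s_is_x)^3=e$ (for $i$ joined to $x$), taking into account that $x$ may itself be involved in an oriented triangle with two vertices of $\P$, thereby contributing an additional (R3) relation to $\R$ that must be invoked. Careful bookkeeping, exploiting the tree-like shape of the $\widetilde D_n$ diagram so that most vertices of $\P$ commute with $s_x$, should reduce each of the three verifications to a finite word manipulation of moderate length, entirely analogous to the one displayed in Example~\ref{ex-gt2}.
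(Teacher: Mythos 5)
Your proposal matches the paper's approach: the paper simply declares the proof of Lemma~\ref{D-} ``straightforward,'' meaning exactly the direct verification you describe --- substituting $t_i=s_xs_is_x$ for vertices with an arrow into $x$, rewriting the relations of $W_{\mu_x(\R)}$, and reducing them via (R1)--(R4) as in Example~\ref{ex-gt2}, with Lemma~\ref{one way} supplying the converse direction. Your additional remarks on where the bookkeeping is delicate (the transformed $\widetilde D_k$ relation) are consistent with, and slightly more explicit than, what the paper leaves implicit.
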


\begin{figure}[!h]
\begin{center}
\psfrag{4}{\scriptsize $4$}
\psfrag{x}{\scriptsize $x$}
\psfrag{u}{\scriptsize $u$}
\psfrag{v}{\scriptsize $v$}
\psfrag{a}{\scriptsize $a$}
\psfrag{b}{\scriptsize $b$}
\epsfig{file=./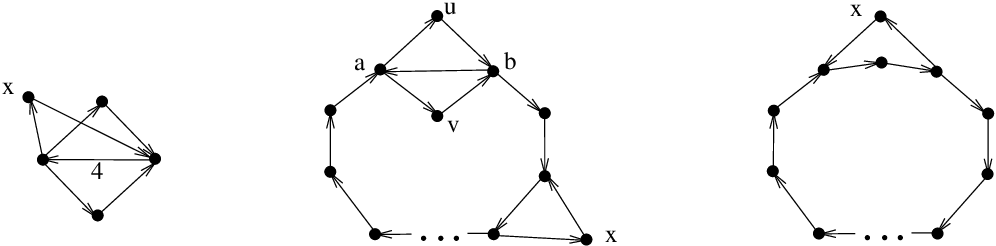,width=0.8\linewidth} 
\caption{The diagrams in Lemma~\ref{D-}. The diagram in the middle has any size $|S|\ge 6$, the node $x$ is attached to the ends of any arrow not lying in the subdiagram $uvab$. The diagram on the right has any size $|S|\ge 5$.}
\label{l6}
\end{center}
\end{figure}

The proof is straightforward.

\begin{lemma}
\label{lD}
Condition {\rm{(C2)}} holds for all risk diagrams $\R=x\cup\P\subset \G$ where $\G$ is mutation-equivalent to $\widetilde D_n$.
\end{lemma}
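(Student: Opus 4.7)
The plan is to follow the strategy of Lemma~\ref{lA}, combining the surface realization of $\widetilde D_n$ as a triangulated disk with two punctures (Table~\ref{surface realizations}) with case-by-case reductions to already-settled situations.

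First I would enumerate the possible pseudo-cycles $\P$ that can appear inside some diagram mutation-equivalent to $\widetilde D_n$. By Table~\ref{subdiagrams}, those carrying an additional affine relation are of type $\widetilde A_{2,2}$ or $\widetilde D_k$ with $3\le k\le n$. Oriented-cycle pseudo-cycles split, by Lemma~\ref{cycles} together with Lemma~\ref{subd of aff}, into two families: the simply-laced cycles (of finite type $D_k$) and the triangle $\widetilde A_{2,1}$ with one double arrow. All other entries of Table~\ref{short cycles} carry arrows of weight $2$ or $3$, or correspond to surfaces/orbifolds that cannot arise as sub-triangulations of a disk with two punctures, so they are excluded.

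For each choice of $\P$ I would list the attachments of $x$ producing a risk diagram $\R=x\cup\P$ for $\G$. Since $\R$ must embed into some triangulation of a disk with two punctures, the combinatorial possibilities are very restricted. For every admissible configuration, condition (C2) is checked as follows: if $\R$ is of finite type, apply Theorem~\ref{bm}; if $\R$ (or $\mu_x(\R)$) matches one of the diagrams of Fig.~\ref{l6}, invoke Lemma~\ref{D-} combined with Lemma~\ref{one way}; if $\R$ is contained in some $\widetilde A_{p,q}$, apply Lemma~\ref{lA} together with Lemma~\ref{order}; if $\mu_x$ turns $x$ into a sink or a source, use Lemma~\ref{outside}; the handful of small remaining cases are verified by a direct computation in the style of Example~\ref{ex-gt2}.

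The main obstacle is the case $\P\cong\widetilde D_k$ with $k$ relatively large, where $x$ attaches to an interior vertex of the central chain of $\P$. Here $\R$ is itself affine and not literally one of the diagrams in Fig.~\ref{l6}, and $\mu_x(\R)$ need not be simpler. To handle it, I would use Lemma~\ref{reversion} together with the $\Z/2$-symmetry exchanging the two branches at an end of $\widetilde D_k$ to cut the problem down to a short list of inequivalent configurations, and then show that in each remaining case a smaller subdiagram of $\R$ already matches the middle diagram of Fig.~\ref{l6}, so that Lemma~\ref{D-} and Lemma~\ref{order} close the argument.
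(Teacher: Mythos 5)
Your overall plan (enumerate pseudo-cycles, then enumerate attachments of $x$, then reduce each configuration to \cite{BM}, Lemma~\ref{D-}, or the $\widetilde A_{p,q}$ case) is organized differently from the paper, which instead works globally: it uses the realization of $\G$ as a triangulation of a twice-punctured disk to pin down the shape of the \emph{entire} diagram $\G$ (separately in the presence and absence of a double arrow, via the neighbourhoods of the punctures, Fig.~\ref{d-triang} and Fig.~\ref{d-u}), and then reads off all possible risk subdiagrams from that global form. Your local route could in principle work, but as written it has a genuine gap at exactly the step you flag as the main obstacle.

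The problem is your proposed reduction for $\P\cong\widetilde D_k$ with $x$ attached along the cycle: you want to find a \emph{smaller} subdiagram of $\R$ matching the middle diagram of Fig.~\ref{l6} and then invoke Lemma~\ref{D-} together with Lemma~\ref{order}. This does not close the argument. Condition {\rm{(C2)}} for $\R=x\cup\P$ asserts that the relation $r_\P$ supported on the \emph{full} pseudo-cycle $\P$ follows from the relations of $W_{\mu_x(\R)}$. If you pass to a proper subdiagram $\R'=x\cup\P'$ with $\P'$ a smaller pseudo-cycle of type $\widetilde D_{k'}$, you are verifying the preservation of a different relation, supported on different vertices; Lemma~\ref{order} only transfers {\rm{(C2)}} for a \emph{fixed} risk diagram from a smaller ambient diagram to a larger one, not from a subdiagram of a risk diagram to the risk diagram itself. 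There is no mechanism in the paper by which preservation of the $\widetilde D_{k'}$-relation implies preservation of the $\widetilde D_k$-relation. Moreover, the reduction is unnecessary: the middle diagram of Fig.~\ref{l6} is already stated as a family of arbitrary size $|S|\ge 6$ in which $x$ is attached to the ends of \emph{any} arrow of the big cycle outside $uvab$, so the configuration you single out is covered directly by Lemma~\ref{D-} (whose proof is a computation over that whole family). What your write-up is actually missing is the complementary step the paper's global analysis supplies for free: a proof that no \emph{other} attachment of $x$ to a $\widetilde D_k$ pseudo-cycle (to non-adjacent vertices, to more than two vertices, or by a non-simple arrow) can occur inside a diagram mutation-equivalent to $\widetilde D_n$. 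Saying the possibilities are ``very restricted'' because $\R$ embeds in a twice-punctured disk is the right idea, but that restriction is precisely the content of the paper's analysis of Fig.~\ref{d-u}(a)--(f) and needs to be carried out, not asserted.
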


\begin{proof}
Recall that diagrams of type  $\widetilde D_n$ correspond to ideal triangulations of a twice punctured disk (with $n-2$ marked points on the boundary).

\begin{figure}[!h]
\begin{center}
\psfrag{a}{\small (a)}
\psfrag{b}{\small (b)}
\psfrag{c}{\small (c)}
\psfrag{d}{\small (d)}
\psfrag{Ak}{\small $X$}
\epsfig{file=./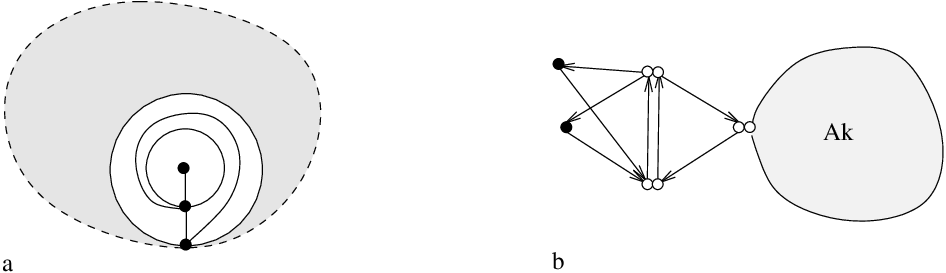,width=0.57\linewidth}
\caption{Triangulation of a twice punctured disk and its diagram. 
$X$ is a diagram of type $A_{n-3}$.}
\label{d-triang}
\end{center}
\end{figure}

First, suppose that $\G$ contains a double arrow. As it is shown in Table~\ref{double edges}, a double arrow in a diagram of the type $\widetilde D_n$ can be obtained by gluing blocks of type II and IV only.
The gluing of these two blocks results in a disk with two punctures and one marked point on the boundary, as shown in Fig.~\ref{d-triang}(a) (denote this disk by $\mathcal A$ and the whole twice punctured disk corresponding to the whole diagram $\G$ by $\mathcal D$).
Clearly, $\mathcal G\setminus \mathcal A$ is a disk (corresponding to a diagram of type $A_{n-3}$, so the diagram $\G$ is constructed as in Fig.~\ref{d-triang}(b). 
In particular, this means that each risk diagram is either contained in a subdiagram of type $A_k$, or is one shown in Fig.~\ref{l6} on the left. Thus, each risk subdiagram of $\G$ is already checked either in~\cite{BM} or in Lemma~\ref{D-}.  

\begin{figure}[!h]   
\begin{center}
\psfrag{u}{\scriptsize $u$}
\psfrag{v}{\scriptsize $v$}
\psfrag{A}{\small $A$}
\psfrag{a}{\small (a)}
\psfrag{b}{\small (b)}
\psfrag{c}{\small (c)}
\psfrag{d}{\small (d)}
\psfrag{e}{\small (e)}
\psfrag{f}{\small (f)}
\psfrag{p}{\scriptsize $p$}
\epsfig{file=./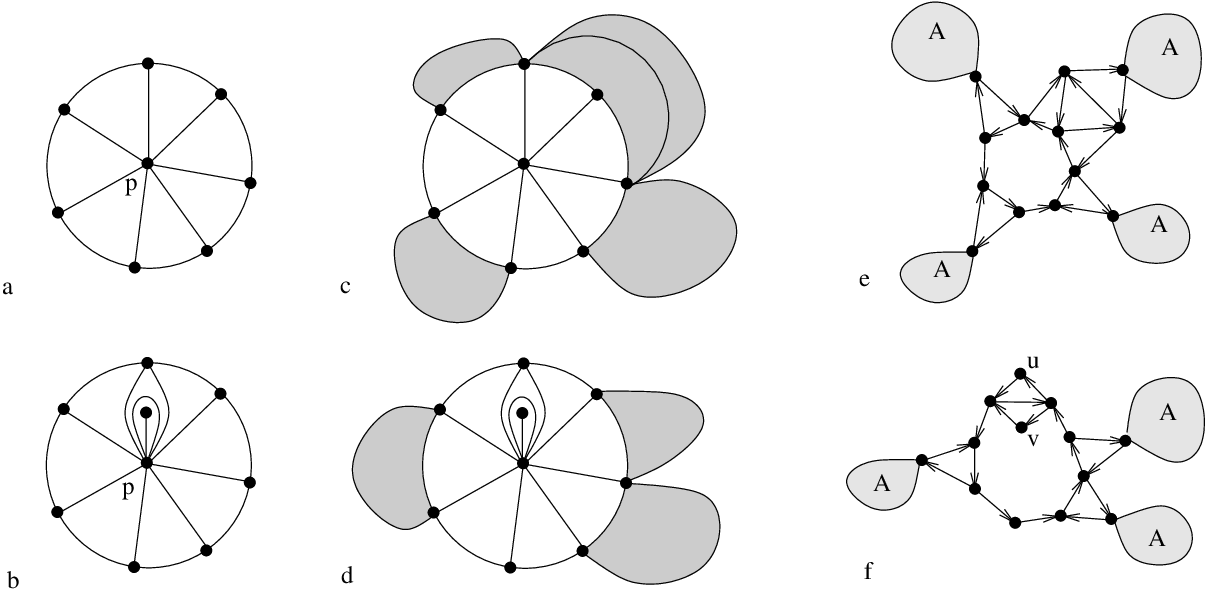,width=0.9\linewidth}
\caption{(a),(b): possible neighborhoods of a puncture (up to flip in an interior edge of the digon in (b)); (c),(d): the same with disks attached (also up to a flip in (d)); (e),(f): the corresponding diagrams (up to mutation in the vertices $v$ and $u$); diagrams marked by $A$ all have types $A_{k_i}$.}
\label{d-u}
\end{center}
\end{figure}

Now, suppose that $\G$ contains simple arrows only. Consider a puncture $p$ inside the twice punctured disk, let $\mathcal U$ be the union of all triangles incident to $p$.
Then $\mathcal U$ is triangulated in one of the two ways shown in Fig.~\ref{d-u}(a) and (b). The remaining part  $\mathcal D\setminus \mathcal U$ of the twice punctured disk  $\mathcal D$ is attached to $\mathcal U$ 
in such a way that either only one new puncture arises (for the diagram on  Fig.~\ref{d-u}(a)) or no new puncture arises (for the diagram of  Fig.~\ref{d-u}(b)). This is possible only if we attach some disks (or nothing at all) to some boundary edges of $\mathcal U$,  which results in the triangulations looking as in
 Fig.~\ref{d-u}(c) and  (d) respectively, and corresponds to diagrams on  Fig.~\ref{d-u}(e) and (f).
It is easy to see from these diagrams that all risk subdiagrams of $\G$ are already checked either in~\cite{BM} or in Section~\ref{sec a} or in Lemma~\ref{D-}.
  
\end{proof}

\subsection{Condition {\rm{(C2)}} for $\widetilde B_n$ and  $\widetilde C_n$ }
\label{sec bc}

\begin{lemma}
\label{B-}
Condition {\rm{(C2)}} holds for the risk diagram $\R=x\cup \P$ shown in Fig.~\ref{l9}.

\end{lemma}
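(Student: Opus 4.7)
The plan is to verify Condition (C2) for this specific risk diagram by explicit computation, following the same strategy used in Lemmas~\ref{A-} and~\ref{D-}. Since $\R = x \cup \P$ lies in a diagram of type $\widetilde B_n$ or $\widetilde C_n$, the pseudo-cycle $\P$ must be of one of a very small number of types allowed by Lemmas~\ref{cycles} and~\ref{subd of aff}: either an oriented cycle with weighted arrows (rows 2, 3, 4 or 6 of Table~\ref{short cycles}) or a pseudo-cycle of type $\widetilde B_3$ or $\widetilde B_n$ supporting an additional affine relation from Table~\ref{add-t}. The first step is therefore to describe precisely the diagrams $\R$ and $\mu_x(\R)$, list the pseudo-cycles they contain, and write down the defining relations of $W_\R$ and $W_{\mu_x(\R)}$ explicitly.

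Next, I would invoke Remark~\ref{in-or-out} to pick the more convenient convention, and define the candidate isomorphism by setting $t_i = s_x s_i s_x$ when there is an arrow $i \to x$ in $\R$ and $t_i = s_i$ otherwise. Because Lemma~\ref{one way} already establishes the symmetry between the two directions, it suffices to check that every defining relation of $W_{\mu_x(\R)}$, when rewritten via these substitutions, follows from the relations of $W_\R$. The short relations (R1) and (R2) and most of the cycle relations (R3) reduce to standard braid/commutation manipulations of the sort performed in Example~\ref{ex-gt2}; many of them are already handled by the finite-type computations of~\cite{BM} once the double arrow is singled out.

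The main obstacle is the additional affine relation of type $\widetilde B_3$ from Table~\ref{add-t}, namely $(s_{n+1} s_1 s_{n+1} \, s_2 s_3 \cdots s_{n-1} s_n s_{n-1} \cdots s_3 s_2)^2 = e$, which is a long word. When the mutation $\mu_x$ either creates a new pseudo-cycle of $\widetilde B_3$-type in $\mu_x(\R)$ or destroys one present in $\R$, I would verify the corresponding long relation by conjugating through $s_x$ one letter at a time, using the commutation relations from (R2) to pull the $s_x$'s past the letters they commute with, and the short cycle relations of type (R3) to resolve the triples of letters involving $x$. In every case, the computation is driven by the combinatorial constraint that $\R$ is still a subdiagram of an affine diagram, so by Lemma~\ref{subd of aff} the overall word is controlled.

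Once all defining relations of $W_{\mu_x(\R)}$ have been verified in $W_\R$, the same computation in reverse (via Lemma~\ref{one way}) shows the map is an isomorphism, giving the desired invariance and completing the verification of Condition (C2) for this diagram.
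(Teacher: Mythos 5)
Your proposal takes essentially the same route as the paper: the paper's entire proof of this lemma is the single line ``The proof is straightforward,'' meaning precisely the generator-substitution check you describe ($t_i = s_x s_i s_x$ for arrows $i\to x$, $t_i=s_i$ otherwise, with Lemma~\ref{one way} handling the reverse direction), and the key relation to push through is indeed the long additional affine relation of type $\widetilde B_k$. The only caveats are that, like the paper, you stop short of actually executing the conjugation computation, and that Fig.~\ref{l9} depicts one specific family (the $\widetilde B_k$ additional-affine pseudo-cycle with $x$ glued onto the two ends of an arrow in its simply-laced tail), so the preliminary case analysis over possible types of the pseudo-cycle is unnecessary here --- that enumeration belongs to Lemma~\ref{lBC}, not to this lemma.
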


\begin{figure}[!h]
\begin{center}
\psfrag{2}{\scriptsize $2$}
\psfrag{4}{\scriptsize $4$}
\psfrag{x}{\scriptsize $x$}
\psfrag{u}{\scriptsize $u$}
\psfrag{v}{\scriptsize $v$}
\psfrag{a}{\scriptsize $a$}
\psfrag{b}{\scriptsize $b$}
\epsfig{file=./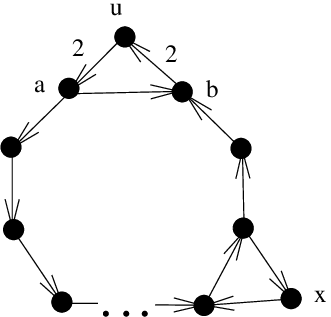,width=0.24\linewidth}
\caption{The diagram in Lemma~\ref{B-}. The diagram has any size $|\R|\ge 5$. The node $x$ is attached to the ends of any arrow not lying in the subdiagram $uab$.}
\label{l9}
\end{center}
\end{figure}

The proof is straightforward.

\begin{lemma}
\label{lBC}
Condition {\rm{(C2)}} holds for all risk diagrams $\R=x\cup\P\subset \G$ where $\G$ is mutation-equivalent to $\widetilde B_n$ or $\widetilde C_n$.

\end{lemma}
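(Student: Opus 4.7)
The plan is to follow the same strategy as in Lemmas~\ref{lA} and~\ref{lD}: use the surface/orbifold realizations from Table~\ref{surface realizations} (a disk with one puncture and one orbifold point for $\widetilde B_n$; a disk with two orbifold points for $\widetilde C_n$), enumerate the possible shapes of block decompositions, and reduce every risk diagram to a configuration already handled by~\cite{BM}, by Lemmas~\ref{lA}--\ref{lD}, or by the technical Lemma~\ref{B-}.

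First I would split according to whether $\G$ contains a double arrow. By Table~\ref{double edges}, in mutation class $\widetilde C_n$ a double arrow can arise only from a block of type $\widetilde{\rm{V}}_{12}$, so if a double arrow is present the orbifold decomposes as this block (a bigon whose two inner edges are pending to the two orbifold points) glued along one white vertex to a triangulated disk of type $B_{n-1}$. Any risk diagram $\R=x\cup\P$ is then either contained in the $B_{n-1}$ part (covered by~\cite{BM}), or else its pseudo-cycle $\P$ lives in $\widetilde{\rm{V}}_{12}$, in which case $\P$ is the oriented cycle of type $\widetilde C_2$ and $\R$ falls into Lemma~\ref{B-}. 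For $\widetilde B_n$ a double arrow forces the decomposition II+$\widetilde{IV}$, which realises a punctured monogon glued along its boundary to a triangulated disk of type $A_{n-3}$; the resulting diagrams match the shape of Fig.~\ref{d-triang}(b) in the proof of Lemma~\ref{lD} with one puncture replaced by an orbifold point, and the possible risk diagrams are either subdiagrams of type $A_k$ (settled in~\cite{BM}) or fit the hypotheses of Lemma~\ref{B-}.

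When $\G$ contains no double arrow, I would analyze the star of each special point (puncture or orbifold point) exactly as in the proof of Lemma~\ref{lD} (see Fig.~\ref{d-u}). In the $\widetilde C_n$ case both special points are orbifold points so each star is a single orbifold triangle; in the $\widetilde B_n$ case one gets a standard puncture star as in Fig.~\ref{d-u}(a) or~(b) together with one orbifold triangle. Gluing the complementary disks back produces finitely many combinatorial pictures, and in each of them every pseudo-cycle is either an oriented cycle from Table~\ref{short cycles} or a $\widetilde B_3$ subdiagram from Table~\ref{add-t}. Enumerating the positions of a third vertex $x$ attached to $\P$ with both an incoming and an outgoing arrow, every resulting risk diagram becomes a subdiagram of type $A_k$, $D_k$, or $\widetilde D_k$ (already settled in~\cite{BM}, Lemma~\ref{lA}, or Lemma~\ref{lD}), or coincides with the configuration treated in Lemma~\ref{B-}.

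The hard part is the handling of the $\widetilde B_3$ pseudo-cycle: because Table~\ref{add-t} prescribes \emph{two} distinct additional affine relations on it, one cannot reduce the verification of (C2) to a single cycle relation as in~\cite{BM}, and one has to check that both relations are preserved simultaneously under the mutation $\mu_x$. This is precisely what forces the explicit tabulation in Lemma~\ref{B-}: every configuration containing a $\widetilde B_3$ pseudo-cycle together with an extra vertex that is neither a sink nor a source is listed there and verified by direct computation along the lines of Example~\ref{ex-gt2}. Once this finite enumeration is complete, Lemma~\ref{order} transfers all the checks to the larger ambient diagrams in mutation types $\widetilde B_n$ and $\widetilde C_n$, finishing the proof.
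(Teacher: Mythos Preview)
Your overall framework matches the paper's: split on whether $\G$ has a double arrow, and then use the surface/orbifold description to control the block decomposition. However, the no-double-arrow case contains a genuine gap. You claim that every pseudo-cycle is either ``an oriented cycle from Table~\ref{short cycles} or a $\widetilde B_3$ subdiagram'', but Table~\ref{short cycles} lists only cycles with a non-simple arrow; you have omitted the simply-laced oriented cycles, which do occur in $\widetilde B_n$ (around the puncture). When $\P$ is such a cycle and $x$ is attached to two consecutive vertices of $\P$ by arrows of weight $2$ (i.e.\ via a block of type $\widetilde{\rm IV}$), the resulting risk diagram is not of type $A_k$, $D_k$, or $\widetilde D_k$, and it is not the configuration of Lemma~\ref{B-}. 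The paper handles this explicitly (Fig.~\ref{bc-}(c)): one observes that $\mu_x$ turns $\R$ into a pseudo-cycle of finite type $B_k$, so {\rm{(C2)}} reduces to {\rm{(C1)}}, already verified in Corollary~\ref{all-ps}. Your enumeration never reaches this case.

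There is also a confusion about which $\widetilde B$-pseudo-cycles are in play. In the no-double-arrow case the relevant additional affine pseudo-cycle is the one in row~4 of Table~\ref{add-t} (type $\widetilde B_k$ for every $k\ge 3$, a simply-laced cycle with one extra vertex attached by a weight-$2$ arrow), not only the cyclic $\widetilde B_3$ of row~3, which carries a double arrow and therefore cannot appear here. Lemma~\ref{B-} is precisely the family of risk diagrams $\R=x\cup\P$ with $\P$ of type $\widetilde B_k$ (row~4) and $x$ attached to an arrow in the simply-laced part; it is not a finite tabulation of $\widetilde B_3$ configurations. Finally, note that risk diagrams of type $\widetilde D_k$ cannot occur inside $\widetilde B_n$ or $\widetilde C_n$ at all (two punctures are needed), so invoking Lemma~\ref{lD} for them is vacuous; the actual use of Lemmas~\ref{lA} and~\ref{lD} in the paper is to cover the skew-symmetric risk diagrams arising when $x$ is attached to a simply-laced cycle by simple arrows only.
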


\begin{proof}
The diagrams of type  $\widetilde B_n$ and $\widetilde C_n$ correspond to ideal triangulations of a punctured disk with one orbifold point and a disk with two orbifold points respectively, 
see Table~\ref{surface realizations}.


First, suppose that $\G$ contains a double arrow. 
A double arrow is either contained in a block $\widetilde{\rm{V}}_{12}$ or is obtained by gluing two blocks of types  II and $\widetilde{\rm{IV}}$, see Table~\ref{double edges}.  
In the former case the triangulation looks as in Fig.~\ref{bc-diagr}(a)  and the diagram $\G$ looks as  in Fig.~\ref{bc-diagr}(b), so $\G$ does not contain any risk diagrams that were not studied yet.
In the latter case we obtain the triangulation shown in  Fig.~\ref{bc-diagr}(c) which results in a diagram shown in  Fig.~\ref{bc-diagr}(d).
Hence, each risk subdiagram of $\G$ is already checked either in~\cite{BM} or in Sections~\ref{sec a} and~\ref{sec d} or in Lemma~\ref{l9}.

\begin{figure}[!h]
\begin{center}
\psfrag{a}{\small (a)}
\psfrag{b}{\small (b)}
\psfrag{c}{\small (c)}
\psfrag{d}{\small (d)}
\psfrag{e}{\small (e)}
\psfrag{f}{\small (f)}
\psfrag{2}{\small 2}
\psfrag{4}{\small 4}
\psfrag{X}{\small $X$}
\psfrag{Y}{\small $Y$}
\psfrag{Am}{\small $A_m$}
\psfrag{Bk}{\small $B_k$}
\epsfig{file=./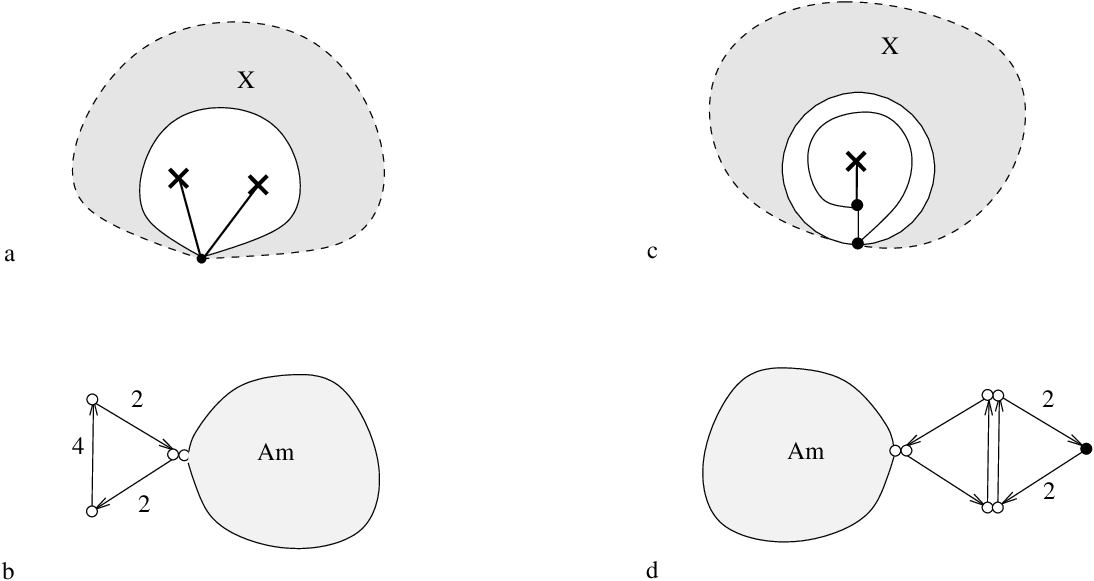,width=0.6\linewidth}
\caption{Diagrams of type  $\widetilde B_n$ and  $\widetilde C_n$ with double arrows.
$X$ is a disk. }
\label{bc-diagr}
\end{center}
\end{figure}

Suppose now that $\G$ contains no double arrows. Then any pseudo-cycle contained in $\G$ is either a simply-laced cycle or a pseudo-cycle of type $\widetilde D_k$ or  $\widetilde B_k$. 
Consider these three types of pseudo-cycles separately.

A pseudo-cycle of type $\widetilde D_k$ can not be a subdiagram of $\G$ as  the triangulated surface corresponding to $\widetilde D_k$ has two punctures, while the surface corresponding to $\G$ has 
either one puncture (if $\G$ is of type  $\widetilde B_n$)  or no punctures  (if $\G$ is of type  $\widetilde C_n$).

A pseudo-cycle of type  $\widetilde B_k$ corresponds to a triangulated disk with a puncture and an orbifold point, so, it can not be a subdiagram of $\G$ if $\G$ is of mutation type  $\widetilde C_n$.
If  $\G$ is of mutation type  $\widetilde B_n$ then the triangulation of a surface corresponding to $\G$ is obtained from a triangulation of a surface corresponding to $\P$ by attaching a number of disks
(see Fig.~\ref{bc-}(a)), and the diagram $\G$ looks as in Fig.~\ref{bc-}.b. The only new risk subdiagram in $\G$ 
is the diagram checked in Lemma~\ref{B-}.

Finally, consider a pseudo-cycle $\P$ which is a simply-laced cycle. Let $\R=x\cup \P$ be a risk diagram for $\G$. Consider a block decomposition of $\R$.
If all arrows incident to $x$ in $\R$ are simple, then $\R$ is a skew-symmetric block-decomposable diagram and is already checked  either in~\cite{BM} or in Sections~\ref{sec a} and~\ref{sec d}. 
So, we may assume that some arrow incident to $x$ is labeled by $2$. Furthermore, since $x$ is attached to $\P$ by at least 2 arrows (by the definition of risk diagram), all blocks containing $x$ have at least two white vertices.
The only block containing a non-simple arrow and two white vertices is the block $\widetilde{\rm{IV}}$. So, $x$ is attached to the simply-laced cycle $\P$ by the block  $\widetilde{\rm{IV}}$
and we get a diagram shown in Fig.~\ref{bc-}(c). After mutation in $x$ this diagram coincides with a pseudo-cycle of type $B_k$, so, {\rm{(C2)}} for $\R$ becomes {\rm{(C1)}} for pseudo-cycle of type $B_k$ which is already verified (See Cor.~\ref{all-ps}).
 
\end{proof}

\begin{figure}[!h]
\begin{center}
\psfrag{a}{\small (a)}
\psfrag{b}{\small (b)}
\psfrag{c}{\small (c)}
\psfrag{x}{\small $x$}
\psfrag{A}{\small $A$}
\psfrag{Bk}{\small $\t B_k$}
\epsfig{file=./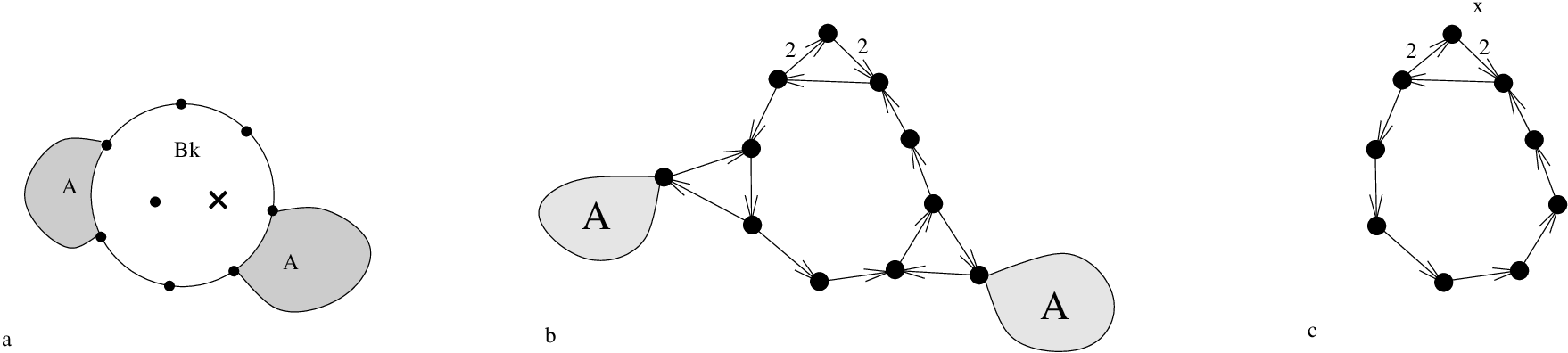,width=0.9\linewidth}
\caption{To the proof of Lemma~\ref{lBC}}
\label{bc-}
\end{center}
\end{figure}

\subsection{Condition {\rm{(C2)}} for $\widetilde G_2$}
This is a small diagram with a small mutation class, so we just check {\rm{(C2)}} explicitly.

\subsection{Condition {\rm{(C2)}} for $\widetilde F_4$}
Consider the mutation class of $\widetilde F_4$ (it consists of 59 diagrams).
We need to find  all pseudo-cycles and all risk diagrams.

First, consider proper subdiagrams.
Let $\G$ be a diagram of type  $\widetilde F_4$. Then each proper subdiagram  $X\subset \G$ has order at most $4$
and contains no arrows labeled by $3$. Due to results of~\cite{FeSTu2}, this implies that either $X$ is block-decomposable or $X$ is mutation-equivalent to $F_4$.
All risk diagrams contained in diagrams of mutation type $F_4$ satisfy {\rm{(C2)}} by results of~\cite{BM}. 
Any block-decomposable affine diagram is of one of the types $\widetilde A_{p,q}$,  $\widetilde B_n$, $\widetilde C_n$ or  $\widetilde D_n$, and thus all risk diagrams are already checked in the previous sections. 
Hence, {\rm{(C2)}} for risk subdiagrams of order at most $4$ is verified.

Looking through the mutation class, we find a unique risk diagram of order $5$, see Fig.~\ref{f-risk}(a).
We label by $x$ the vertex not lying in the pseudo-cycle of order 4
(so that $\R=\P\cup x$ where $\R$ is the risk diagram and $\P$ is a pseudo-cycle).
It is easy to see that the mutation $\mu_x$ turns $\R$ into the cyclic diagram on  Fig.~\ref{f-risk}(b).
This diagram is a pseudo-cycle checked in Lemma~\ref{rel-cycles}.

This proves that {\rm{(C2)}} holds for all risk diagrams for $\widetilde F_4$.

\begin{figure}[!h]
\begin{center}
\psfrag{a}{\small (a)}
\psfrag{b}{\small (b)}
\psfrag{c}{\small (c)}
\psfrag{x}{\small $x$}
\psfrag{2}{\small $2$}
\epsfig{file=./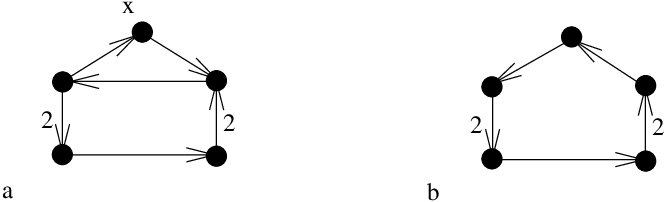,width=0.5\linewidth}
\caption{Risk diagram of order 5 for $\widetilde F_4$.}
\label{f-risk}
\end{center}
\end{figure}

%
%

\subsection{Condition {\rm{(C2)}} for $\widetilde E_6$,  $\widetilde E_7$, $\widetilde E_8$}

Since  $\widetilde E_6$,  $\widetilde E_7$, $\widetilde E_8$ are skew-symmetric, the only types of pseudo-cycles we need to check are simply-laced oriented cycles  and pseudo-cycles
of types $\widetilde A_{p,q}$ and  $\widetilde D_k$ . First we show (Lemma~\ref{simple}) that no sufficiently large risk diagram contains double arrow, then prove (Lemmas~\ref{an},~\ref{l-wheel} and~\ref{dn})
that the risk diagrams are block-decomposable, and finally, in Lemma~\ref{lE} we show that {\rm{(C2)}} holds for risk subdiagrams of diagrams of type $\widetilde E_6$,  $\widetilde E_7$, $\widetilde E_8$.

\begin{nota}
Given an arrow with ends $u$ and $v$ we will call it $uv$ if the arrow points to $v$.

\end{nota}

\begin{lemma}
\label{simple}
Let $\P$ be an oriented cycle or a pseudo-cycle of type $\widetilde D_k$.
Suppose that $|\P|>4$ and $\R=x\cup \P$ is a subdiagram of some mutation-finite skew-symmetric diagram $S$.
Then $\R$ is simply-laced.

\end{lemma}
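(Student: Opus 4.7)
My plan is to split the proof into two parts: first I would show that $\P$ itself is simply-laced, then I would show that every arrow between the extra vertex $x$ and $\P$ is simple as well.

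For the first part, the $\widetilde D_k$ pseudo-cycle case is immediate from Table~\ref{add-t}, whose diagram supporting the $\widetilde D_k$ relation contains only simple arrows. For the oriented cycle case, I would invoke Lemma~\ref{cycles}: since $S$ is skew-symmetric, every arrow of $\P$ has weight $1$ or $4$, and any non-simple oriented chordless cycle in a mutation-finite diagram must appear in Table~\ref{short cycles}. The only skew-symmetric rows there are row~1 (type $\widetilde A_{2,1}$) and row~5 (punctured torus), both of size $3$. Since $|\P|>4$ by hypothesis, $\P$ cannot appear in that table and must therefore consist entirely of simple arrows.

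For the second part, I would argue by contradiction: assume some arrow $xv$ with $v\in\P$ has weight $4$. The strategy is to exhibit a small mutation-infinite subdiagram of $\R$, contradicting the fact that a subdiagram of a mutation-finite diagram is itself mutation-finite. I would select two neighbors $u_1,u_2\in\P$ of $v$ and consider the four-vertex subdiagram $\{x,v,u_1,u_2\}$; since $\P$ is simply-laced and has no chord $u_1u_2$ (as $|\P|>4$ in the cycle case, or by the tree-like structure of $\widetilde D_k$ elsewhere), this subdiagram contains precisely the simple arrows $u_1v$, $vu_2$ together with the weight-$4$ arrow $xv$. When $x$ has no further arrows to $u_1$ or $u_2$, the resulting ``T-shape with a double edge'' is exactly one of the four mutation-infinite configurations of Fig.~\ref{4} already used in the proof of Lemma~\ref{cycles}, and the desired contradiction follows at once.

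The main technical obstacle is the case in which $x$ has additional arrows to the $u_i$, together with the degenerate case where $v$ is a leaf of $\widetilde D_k$. For the former, I would mutate at $v$; the rule $\pm\sqrt{c}\pm\sqrt{d}=\sqrt{ab}$ of Figure~\ref{quivermut} with $ab=4\cdot 1=4$, together with the weight bound of the Proposition preceding Theorem~\ref{class}, forces the new weight on each $xu_i$ to lie in $\{0,1,4\}$. An orientation-by-orientation inspection then either recovers one of the forbidden diagrams of Fig.~\ref{4} inside $\mu_v(\R)$, or else produces a weight-$4$ arrow inside the chordless cycle $\mu_v(\P)$, contradicting the first part of the proof since $|\mu_v(\P)|=|\P|>4$. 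For the $\widetilde D_k$ leaf case, I would replace the pair $(u_1,u_2)$ by the unique neighbor $u$ of $v$ in $\P$ together with a further neighbor $w$ of $u$, obtaining a four-vertex path with weights $(1,1,4)$; a single mutation at $u$ turns it into a triangle with weights $(4,4,1)$, whose next mutation creates an arrow of weight $16>4$, violating mutation-finiteness by the same Proposition. These contradictions together cover all configurations and complete the argument.
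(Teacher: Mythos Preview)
Your first part, showing $\P$ itself is simply-laced, is fine and matches the paper's observation.

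The second part, however, is both more complicated than needed and contains concrete mistakes. The configurations in Fig.~\ref{4} are not ``T-shapes'': in the proof of Lemma~\ref{cycles} the phrase ``this arrow together with its two neighbours'' refers to a weight-$4$ arrow inside an oriented chordless cycle, so the resulting four-vertex diagram is a \emph{path} $a_{i-1}\to a_i\stackrel{4}{\to}a_{i+1}\to a_{i+2}$ with the double arrow in the middle. Your subdiagram $\{x,v,u_1,u_2\}$ is a star with centre $v$ and three pendant vertices, which is not among those pictures. (The star is in fact mutation-infinite, but you would need a separate argument---e.g.\ Proposition~\ref{small}---rather than an appeal to Fig.~\ref{4}.) Your $\widetilde D_k$-leaf case has a similar slip: mutating at $u$ in the path $w\!-\!u\!-\!v\!-\!x$ cannot produce any arrow incident to $x$, since $x$ is not adjacent to $u$; only mutating at $v$ creates the $(1,4,4)$ triangle you describe. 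You also do not treat the sub-case where $x$ already has an arrow to $u$ or $w$. Finally, after mutating at $v$ the diagram $\mu_v(\P)$ is no longer a chordless cycle (an oriented simply-laced $n$-cycle becomes a $D_n$-type tree with an extra triangle), so the sentence ``produces a weight-$4$ arrow inside the chordless cycle $\mu_v(\P)$'' does not make sense as stated.

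The paper sidesteps all of this with a one-line three-vertex argument. Suppose $x$ is joined to $a\in\P$ by a double arrow, say $x\to a$. Since no pseudo-cycle of order $\ge 3$ has a source, $a$ has an incoming neighbour $b\in\P$, i.e.\ there is an arrow $b\to a$. Then in the order-$3$ subdiagram $U=\{x,a,b\}$ the vertex $a$ is a sink. But any skew-symmetric mutation-finite diagram of order $3$ containing a double arrow is an oriented cycle, which has no sink; contradiction. This avoids Fig.~\ref{4}, avoids any case analysis on extra arrows from $x$, and handles the $\widetilde D_k$ leaves automatically.
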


\begin{proof}
First, we note that neither oriented cycles nor pseudo-cycles of type $\widetilde D_k$ contain double arrows if their order is more than $4$, so we need to show that $x$ is not attached to $\P$ by double arrow.
  
Suppose that $x$ is connected to $\P$ by a double arrow $xa$, $a\in \P$ (the case when the arrow $ax$ is not simple is similar). Let $b$ be a neighbor of $a$ in $\P$ such that $\P$ contains arrow $ba$ (such a neighbor exists since no pseudo-cycle of order at least $3$ contains a source).
The subdiagram $U=\{xab\}\subset \R$ is a skew-symmetric mutation-finite diagram of order $3$ with a sink $a$. However, it is easy to check that any skew-symmetric mutation-finite diagram of order $3$ containing a double arrow is an oriented cycle. 
\end{proof}

In the following three lemmas we show that  for $\P$ either cyclic or of type $\widetilde A_{2,2}$ or $\widetilde D_k$ any risk diagram $\R=x\cup \P$ is block-decomposable. 

\begin{lemma}
\label{an}
Let $\P$ be a pseudo-cycle of the type  $\widetilde A_{2,2}$ or $\widetilde D_{4}$.  Suppose that $\R=x \cup \P$ is a mutation-finite skew-symmetric risk diagram. Then $\R$ is block-decomposable.

\end{lemma}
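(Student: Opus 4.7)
The plan is to split according to the type of $\P$ and appeal to the classification of skew-symmetric mutation-finite diagrams.

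If $\P$ has type $\widetilde A_{2,2}$, then $|\P|=4$ and hence $|\R|=5$. Since $\R$ is skew-symmetric, mutation-finite, and of order less than $6$, Proposition~\ref{small} immediately yields block-decomposability, with no further work needed in this case.

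If instead $\P$ has type $\widetilde D_4$, then $|\R|=6$ and Proposition~\ref{small} no longer applies; here one uses Theorem~\ref{class} directly. A skew-symmetric mutation-finite diagram of order $6$ is either block-decomposable, or mutation-equivalent to one of the two $6$-vertex exceptional skew-symmetric classes, namely $E_6$ or $X_6$ (the remaining skew-symmetric exceptionals $\widetilde E_n$, $E_n^{(1,1)}$ and $X_7$ all have order at least $7$, so they cannot equal $\R$ itself). It thus suffices to exclude both $E_6$ and $X_6$.

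The $E_6$ case is automatic: every diagram in the mutation class of $E_6$ is of finite type, so every subdiagram is again of finite type (as noted in Section~\ref{bckgr}), whereas $\R$ contains the pseudo-cycle of type $\widetilde D_4$, which is of affine type; hence $\R$ is not mutation-equivalent to $E_6$. For $X_6$ one relies on a direct inspection of the (finite and small) mutation class, for instance using Table~\ref{exceptional} together with Keller's quiver mutation software, to verify that no representative of $X_6$ contains a $\widetilde D_4$ subdiagram.

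I expect the main obstacle to be the explicit check for $X_6$, which does not reduce to any of the structural lemmas established so far; the other steps are immediate consequences of the classification and the fact that subdiagrams of finite-type diagrams are again of finite type.
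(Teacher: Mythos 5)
Your proof is correct and follows essentially the same route as the paper: the paper likewise reduces the claim to showing $\R$ is not mutation-equivalent to $E_6$ or $X_6$, calling this ``evident'' for $\widetilde A_{2,2}$ (order $5$) and ``easily done'' for $\widetilde D_4$ (order $6$). Your write-up merely makes explicit the details the paper leaves implicit (Proposition~\ref{small} for order $5$, the finite-type argument for $E_6$, and inspection of the small $X_6$ mutation class).
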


\begin{proof}
The diagram  $\R=x\cup \P$ is a mutation-finite skew-symmetric diagram of order 5 or 6, so to prove that it is block-decomposable one needs to show that it is not mutation-equivalent to $E_6$ or $X_6$ which is evident for $\widetilde A_{2,2}$ and can be done easily for $\widetilde D_4$.

\end{proof}

\begin{lemma}
\label{l-wheel}
Let $\P$ be an oriented simply-laced cycle. 
Suppose that $\R=x\cup \P$ is a simply-laced risk diagram, and $\R$ is mutation-finite. 
Then $\R$ is block-decomposable. 

\end{lemma}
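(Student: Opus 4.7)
The plan is to combine the classification of mutation-finite skew-symmetric diagrams (Theorem~\ref{class}) with the small-order bound (Proposition~\ref{small}) and then rule out the finitely many exceptional mutation classes. Since $\R$ is simply-laced, it is skew-symmetric. If $|\R|\le 5$, Proposition~\ref{small} immediately yields block-decomposability, so I may assume $|\R|\ge 6$, equivalently $|\P|\ge 5$. Under this assumption, either $\R$ is block-decomposable---in which case we are done---or by Theorem~\ref{class} $\R$ is mutation-equivalent to one of the eleven exceptional skew-symmetric diagrams $E_6,E_7,E_8,\widetilde E_6,\widetilde E_7,\widetilde E_8,E_6^{(1,1)},E_7^{(1,1)},E_8^{(1,1)},X_6,X_7$, and I must exclude the latter possibility.

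The finite types $E_6,E_7,E_8$ can be removed by a short argument. A fully oriented simply-laced chordless cycle $\P$ of length $n\ge 4$ is not of finite mutation type: every orientation of a simply-laced finite Dynkin diagram is acyclic, and a direct check of small mutation classes (or a comparison with the mutation class of $\widetilde A_{n-1}$, in which $\P$ in fact sits after a single mutation) shows that no chordless oriented cycle of length $\ge 4$ arises from a finite-type simply-laced diagram. Hence for $|\P|\ge 5$ the subdiagram $\P\subset\R$ is of non-finite type, and since subdiagrams of finite-type diagrams are themselves of finite type, $\R$ cannot be $E_6$, $E_7$ or $E_8$.

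For each of the remaining eight exceptional classes $\widetilde E_6,\widetilde E_7,\widetilde E_8,E_6^{(1,1)},E_7^{(1,1)},E_8^{(1,1)},X_6,X_7$ I would verify by direct inspection that no representative contains a chordless fully oriented simply-laced cycle of length $|\R|-1\ge 5$ as an induced subdiagram. These mutation classes are finite and explicit, so the check can be carried out with Keller's quiver mutation applet, exactly as in the proof of Lemma~\ref{cycles}. The main obstacle here is the sheer volume of checking for the largest classes $\widetilde E_8$, $E_7^{(1,1)}$, and $E_8^{(1,1)}$. The hard part will be obtaining a uniform structural bound---showing, in the spirit of Corollary~\ref{cycles-af}, that every chordless oriented cycle in any exceptional mutation-finite skew-symmetric diagram has length at most $4$---which would dispose of all the exceptional cases at once; such a bound might be extracted from an oriented analogue of Lemma~\ref{non-or} or from the explicit root-system realizations of the exceptional cluster types.
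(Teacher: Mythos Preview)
Your proposal contains a concrete error that breaks the argument. You claim that a simply-laced oriented chordless cycle $\P$ of length $n\ge 4$ is not of finite mutation type, and you use this to exclude $E_6,E_7,E_8$. This is false: the oriented simply-laced $n$-cycle is of finite type $D_n$ (for $n\ge 3$), a fact the paper itself invokes in the proof of Lemma~\ref{rel-cycles}. Your justification---that orientations of Dynkin diagrams are acyclic---only tells you that the cycle is not an acyclic representative, not that it lies outside the mutation class; and your parenthetical remark that $\P$ sits in the mutation class of $\widetilde A_{n-1}$ is likewise incorrect (the $\widetilde A$ classes are the cycles with at least one change of direction). So the finite exceptional types are not ruled out, and indeed there is no a priori reason your proposed computer search would return an empty list for any of the eleven classes.

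Beyond this, the rest of the proposal is a promissory note: you would run through eight (in fact eleven) exceptional mutation classes, some containing thousands of diagrams, looking for a specific local configuration. Even granting the search, you have not argued why nothing would be found. The paper takes a completely different, direct route that avoids the classification theorem entirely. It uses Seven's parity lemma (Lemma~\ref{non-or}): in a simply-laced mutation-finite diagram, every vertex meets every non-oriented chordless cycle in an even number of arrows. Starting from a single outgoing arrow $xa_1$, the paper locates the non-oriented chordless cycle through $x$, $a_1$, $a_n$ and applies the parity constraint to the remaining $a_i$. This forces the arrows between $x$ and $\P$ into one of two patterns (one or two oriented triangles on consecutive vertices of $\P$), from which an explicit block decomposition (type II triangles at $x$, type I edges elsewhere) is written down. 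This is both shorter and yields the extra structural information recorded in Remark~\ref{bl-I}, which the paper needs in the proof of Lemma~\ref{dn}.
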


\begin{proof}
Let $\P=\{a_1,\dots,a_n\}$ where $a_i$ is connected to $a_{i+1}$ by an arrow pointing to $a_{i+1}$ (with assumption $a_{n+1}=a_1$). Note that we may assume that $n\ge 5$: all skew-symmetric mutation-finite diagrams of order less than six are block-decomposable.

By the definition of risk diagram, $x$ is connected to $\P$ by both incoming and outgoing arrows. 
Without loss of generality we may assume that $\R$ contains an arrow $xa_1$, see Fig.~\ref{wheel}(a). 
If $a_2x$ is the only other arrow incident to $x$ then $\P\cup x$ is clearly decomposable (into block $a_1a_2x$ of type II and others of type I, see  Fig.~\ref{wheel}(b)),
so we assume that $\R$ contains some arrows $a_ix$ for $i>2$. Then there exists a unique cycle $\C$ containing 
$x,a_1$ and $a_n$, and this cycle is non-oriented. By Lemma~\ref{non-or}, this implies that each vertex of $\P$ is connected to $\C$ by even number of arrows. 
Let $l=\min\{i\ |\ a_i\in \C,i>1\}$. By assumption, $l>2$ , and there is one of the arrows $xa_l$ or $a_lx$.

If $l=3$ then $a_2$ is not connected to $x$ (otherwise there is an odd number of arrows connecting $a_2$ and $\C$). Thus, $a_3$ is connected to $x$ by the arrow $a_3x$ (since it is the only incoming arrow for $x$, see  Fig.~\ref{wheel}(c)), and this diagram is clearly block-decomposable (into blocks $xa_1a_2$, $xa_2a_3$ and $a_ia_{i+1}$ for  $3\le i \le n$).  

If $3<l\le n$ then there is an arrow $xa_2$ (or $a_2x$) and an arrow $xa_{l-1}$ (or $a_{l-1}x$), otherwise Lemma~\ref{non-or} does not hold for $a_2$ or $a_{l-1}$.
By the same reason, none of $a_i$ (for $2<i<l-1$) is connected to $x$, see   Fig.~\ref{wheel}(d). This diagram satisfies  Lemma~\ref{non-or} only if both triangles $xa_1a_2$ and $xa_{l-1}a_l$ are oriented, which implies that the diagram is block-decomposable (into these two blocks of type II and others of type I). 


\end{proof}

\begin{figure}[!h]
\begin{center}
\psfrag{a}{\small (a)}
\psfrag{b}{\small (b)}
\psfrag{c}{\small (c)}
\psfrag{d}{\small (d)}
\psfrag{e}{\small (e)}
\psfrag{x}{$\scriptsize x$}
\psfrag{a1}{$\scriptsize a_1$}
\psfrag{a2}{$\scriptsize a_2$}
\psfrag{a3}{$\scriptsize a_3$}
\psfrag{al}{$\scriptsize a_l$}
\psfrag{al1}{$\scriptsize a_{l-1}$}
\psfrag{an}{$\scriptsize a_n$}
\epsfig{file=./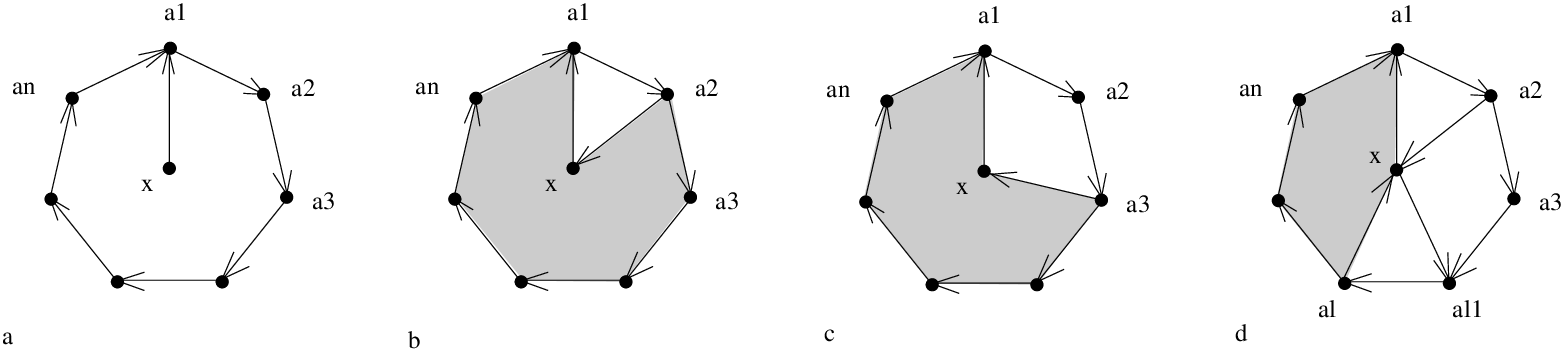,width=0.97\linewidth}
\caption{To the proof of Lemma~\ref{l-wheel}}
\label{wheel}
\end{center}
\end{figure}

\begin{remark}
\label{bl-I}
Note that the block decomposition obtained in Lemma~\ref{l-wheel} consists of one or two blocks of type II containing $x$ and several blocks of type I; in particular, if a vertex $t$ of $\C$ is not connected to $x$ then it is contained in two blocks of type I. 
\end{remark}

\begin{lemma}
\label{dn}
Let $\P$ be a pseudo-cycle of the type  $\widetilde D_k,$ $k\ge 5$.  Suppose that $\R=x\cup \P$ is a simply-laced risk diagram, and $\R$ is mutation-finite. Then $\R$ is block-decomposable.
\end{lemma}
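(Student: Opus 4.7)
The plan is to apply the classification of mutation-finite diagrams (Theorem~\ref{class}). Since $|\R| = |\P| + 1 = k + 2 \ge 7$ and $\R$ is a simply-laced mutation-finite skew-symmetric diagram, Theorem~\ref{class} implies that either $\R$ arises from a triangulated surface or orbifold (hence is block-decomposable, which is exactly what we want) or $\R$ is mutation-equivalent to one of the exceptional simply-laced skew-symmetric diagrams from Theorem~\ref{class} having at least $7$ vertices, namely one of
\[
E_7,\ E_8,\ \widetilde E_6,\ \widetilde E_7,\ \widetilde E_8,\ E_6^{(1,1)},\ E_7^{(1,1)},\ E_8^{(1,1)},\ X_7.
\]
Matching $|\R|=k+2$ with the orders of these diagrams further restricts the possible exceptional types for each value of $k\in\{5,6,7,8\}$, leaving only a small finite list of cases (and automatically excluding every $k\ge 9$).

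First I would dispose of the finite-type candidates $E_7$ and $E_8$: any subdiagram of a finite-type diagram is of finite type (see Section~\ref{bckgr}), while the pseudo-cycle $\P\simeq\widetilde D_k$ is affine. Hence $\R$ cannot be mutation-equivalent to $E_7$ or $E_8$.

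For each of the remaining affine and elliptic candidates, I would inspect their finite mutation classes (for instance using~\cite{Kel}, in the same spirit as the final part of the proof of Lemma~\ref{cycles} and the argument in Lemma~\ref{an}) and verify that no representative contains a subdiagram isomorphic to a pseudo-cycle of type $\widetilde D_k$ with the orientation prescribed by Table~\ref{add-t}. Together with the previous step this excludes every non-block-decomposable possibility and yields the claim.

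The principal obstacle is this last verification: it is conceptually straightforward but somewhat tedious, particularly for $\widetilde E_8$ and $E_8^{(1,1)}$ whose mutation classes are the largest. One can shorten it via Lemma~\ref{subd of aff}, which restricts the affine subdiagrams available inside $\widetilde E_n$, and via the orientation constraint on $\P$, which rules out most candidate subdiagrams on purely combinatorial grounds; nevertheless a complete check ultimately reduces to a finite enumeration of the mutation classes listed above.
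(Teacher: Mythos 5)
Your overall strategy (invoke Theorem~\ref{class} to reduce to a surface/orbifold case plus finitely many exceptional mutation classes, then rule the latter out) is sound in principle, and your disposal of $E_7$ and $E_8$ via finiteness of subdiagrams of finite-type diagrams is correct. However, the key verification step is mis-specified and, as stated, would fail. You propose to check that no representative of the mutation classes $\widetilde E_6,\widetilde E_7,\widetilde E_8,E_6^{(1,1)},E_7^{(1,1)},E_8^{(1,1)}$ contains a subdiagram isomorphic to a $\widetilde D_k$ pseudo-cycle with the orientation of Table~\ref{add-t}. That is false: by Table~\ref{subdiagrams}, diagrams mutation-equivalent to $\widetilde E_6$ do contain $\widetilde D_5$ pseudo-cycles ($6$ vertices inside $7$), those of type $\widetilde E_7$ contain $\widetilde D_6$ pseudo-cycles, and so on --- this is precisely why the additional affine relations of type $\widetilde D_k$ are relevant to the $\widetilde E_n$ classes in the first place. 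So your enumeration would turn up such representatives and you could not conclude. What actually needs to be verified is the sharper statement that in every such occurrence the unique complementary vertex $x$ is a source or a sink with respect to $\P$ (or not connected to it), so that $x\cup\P$ is never a \emph{risk} diagram; the lemma is compatible with the existence of these subdiagrams only through this escape clause, and your proposal never isolates it as the condition to check. Note also that Lemma~\ref{subd of aff} gives you no leverage here, since $\widetilde D_k$ is itself affine and hence a perfectly admissible subdiagram of an affine diagram.

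The paper takes a different and enumeration-free route, which is worth comparing. It writes $\P$ as a non-oriented ``big cycle'' $\C$ (the cycle with one arrow $ab$ reversed) together with two extra vertices $u,v$, and then uses Seven's parity constraint (Lemma~\ref{non-or}) to determine exactly which vertices of $\P$ the vertex $x$ may be joined to and with which orientations. This first yields an explicit block decomposition of $x\cup\C$, and a case analysis on whether $x$ is joined to $a$ and/or $b$ shows that $x$ is joined to neither $u$ nor $v$ and that the arrow $ab$ sits in a block of type I, which can then be replaced by a block $abuv$ of type IV to decompose all of $\R$. This constructive argument works uniformly in $k$ and avoids inspecting the (large) mutation classes of $\widetilde E_8$ and $E_8^{(1,1)}$. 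If you wish to keep your approach, you must replace your verification by the corrected source/sink check above and accept the brute-force enumeration that it entails.
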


\begin{proof}
The pseudo-cycle $\P$ consists of a ``big cycle'' $\C$ with one arrow $ab$ reversed and two more vertices $u$ and $v$, see Fig.~\ref{l-dn}. The cycle $\C$ is non-oriented, so it is connected to $x$ by an even number $r$ of arrows.

\begin{figure}[!h]
\begin{center}
\psfrag{x}{$\scriptsize x$}
\psfrag{a_}{$\scriptsize a$}
\psfrag{b_}{$\scriptsize b$}
\psfrag{u}{$\scriptsize u$}
\psfrag{v}{$\scriptsize v$}
\psfrag{ct}{$\scriptsize c_1$}
\psfrag{c1}{$\scriptsize c_{k-3}$}
\psfrag{al1}{$\scriptsize a_{l-1}$}
\psfrag{an}{$\scriptsize a_n$}
\epsfig{file=./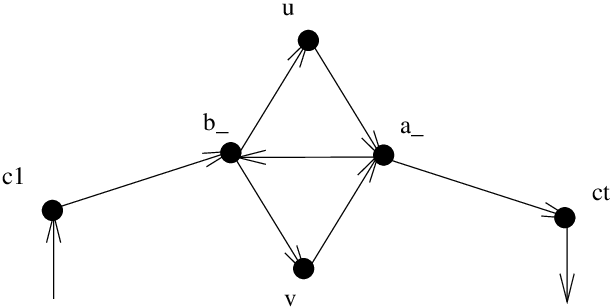,width=0.27\linewidth}
\caption{To the proof of Lemma~\ref{dn}}
\label{l-dn}
\end{center}
\end{figure}

First, suppose that $r=0$. Then both incoming to $x$ and outgoing from $x$ arrows belong to the subdiagram $xuv$. 
Then $xuav$ is a non-oriented cycle connected to $b$ by three arrows which is impossible by Lemma~\ref{non-or}.

Now, suppose that $r>0$. As the next step of the proof, we want to show that $x\cup \C$ is block-decomposable. 
Then we will extend the block-decomposition of $x\cup \C$ to a  block-decomposition of $x\cup \P$.\\

\noindent
{\bf Claim:} The subdiagram $x\cup \C$ is block-decomposable.

\begin{proof}[Proof of Claim]
Assume first that the only vertices of $\C$ connected to $x$ are $a$ and $b$. If the subdiagram $xab$ is a non-oriented cycle, then the subdiagram $xabc_{1}c_{k-3}$ is mutation-infinite~\cite{Kel}, so we can assume that $xab$ is an oriented cycle. Then $x\cup \C$ can be decomposed into a block $xab$ of type II and others of type I. 

Now assume that there is an arrow connecting $x$ and $c_i$. Then the proof follows the proof of Lemma~\ref{l-wheel} verbatim. 

\end{proof}

To transform the decomposition of $x \cup \C$ to a decomposition of $x\cup \P$ we consider three cases: either $x$ is not connected to neither $a$ nor $b$, or $x$ is connected to exactly one of them, or it is connected to both. Our goal is to show that in all these cases the arrow $ab$ is a block of type I in a block decomposition of $x\cup\C$, and $x$ is connected to neither $u$ nor $v$: this means we can substitute $ab$ by a block $abuv$ of type IV to obtain a block decomposition of $\R$. 

\medskip
\noindent
{\bf Case 1:} $x$ is connected neither to $a$ nor to $b$.

First, we will show that $x$ is connected neither to $u$ nor to $v$. 

Suppose the contrary. 
If $x$ is connected to both $u$ and $v$ then $avxu$ is a non-oriented cycle connected to $b$ by three arrows, which is impossible by Lemma~\ref{non-or}, see Fig.~\ref{e-cl}(a). So, suppose that $x$ is connected to one of $u$ and $v$, say to $v$. Recall that $x$ is connected to $\P$ by at least two arrows. Since $x$ is not connected to $a,b,u$ we conclude that there exists $t\in \C$ such that $t$ is connected to $x$ (see Fig.~\ref{e-cl}(b)). Denote by $\C_a$ and $\C_b$ the subdiagrams in $x\cup \P$  such that $\C_a$ and $\C_b$ are chordless cycles containing $x$, $v$ and either  $a$ or $b$ respectively. Clearly, at least one of $\C_a$ and $\C_b$ is non-oriented. On the other hand, $u$ is connected to each of   $\C_a$ and $\C_b$
by a unique arrow, so we come to a contradiction. 

\begin{figure}[!h]
\begin{center}
\psfrag{c1}{}
\psfrag{a_}{\small (a)}
\psfrag{b_}{\small (b)}
\psfrag{x}{\small $x$}
\psfrag{u}{\small $u$}
\psfrag{v}{\small $v$}
\psfrag{t}{\small $t$}
\psfrag{a}{\small $a$}
\psfrag{b}{\small $b$}
\epsfig{file=./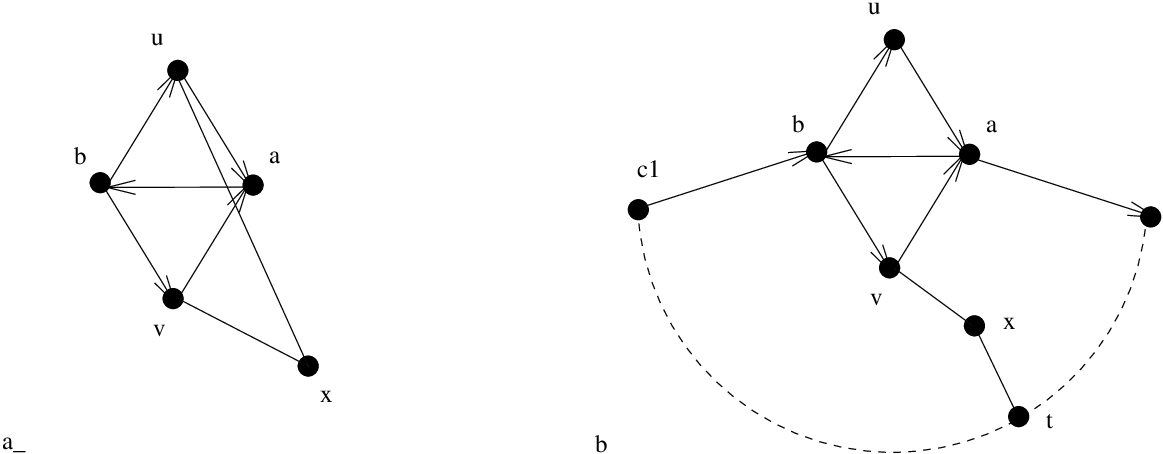,width=0.5\linewidth}
\caption{To the proof of Lemma~\ref{dn}, Case~1.}
\label{e-cl}
\end{center}
\end{figure}

Therefore, we can transform the decomposition of  $x\cup C$ into  a decomposition of $x\cup \P$ by substituting a block $ab$ of type I (see Remark~\ref{bl-I})
by a block $abuv$ of type IV.

\medskip
\noindent
{\bf Case 2:} $x$ is connected to exactly one of $a$ and $b$, say $a$ (the case when $x$ is connected to $b$ can be obtained by changing directions of all arrows). 

Suppose first that $x$ is connected in $\C\setminus a$ to $c_1$ only (see Fig~\ref{e-xabc}(a)). Then the cycle $xac_1$ is oriented (since $b$ is attached to it by one arrow only), and $ab$ is a block of type I in the block decomposition of $x\cup\C$. Let us show that $x$ is connected neither to $u$ nor to $v$, so $ab$ can be substituted by a block $abuv$ of type IV to produce a block decomposition of $\R$. Indeed, if $x$ is joined with both $u$ and $v$, then $c_{k-3}$ is connected to a non-oriented cycle $xubv$ by one arrow, which contradicts Lemma~\ref{non-or}; if $x$ is joined with one of $u$ and $v$ (say $u$), then $v$ is connected to a non-oriented cycle $xua$ by exactly one arrow, which also leads to a contradiction.   

Now assume that $x$ is connected to some $c_i,\ i>1$ in $\C\setminus a$. Let $xabc_{k-3}\dots c_m$ be the smallest chordless cycle in $x\cup\C$ containing $xab$ (it clearly does exist in this case). Note that the cycle $xabc_{k-3}\dots c_{m}$ is non-oriented (see Fig~\ref{e-xabc}(b)), so each of $u$ and $v$ is connected to it by even number of arrows. This implies that $x$ is not connected neither to $u$ nor 
to $v$. Furthermore, since $b$ is not connected to $x$, the arrow $ab$ in the decomposition of $x\cup \C$  is represented by a block of type I. Substituting this block by a block of type IV we obtain a  block decomposition of $\R$.

\begin{figure}[!h]
\begin{center}
\psfrag{a}{\scriptsize $a$}
\psfrag{b}{\scriptsize $b$}
\psfrag{a_}{{\small \!\!\!(a)}}
\psfrag{b_}{{\small (b)}}
\psfrag{u}{\scriptsize $u$}
\psfrag{v}{\scriptsize $v$}
\psfrag{c1}{\scriptsize $c_{k-3}$}
\psfrag{ct}{\scriptsize $c_1$}
\psfrag{cm}{\scriptsize $c_m$}
\psfrag{x}{\scriptsize $x$}
\psfrag{A}{\small $A_m$}
\psfrag{Bk}{\small $B_k$}
\epsfig{file=./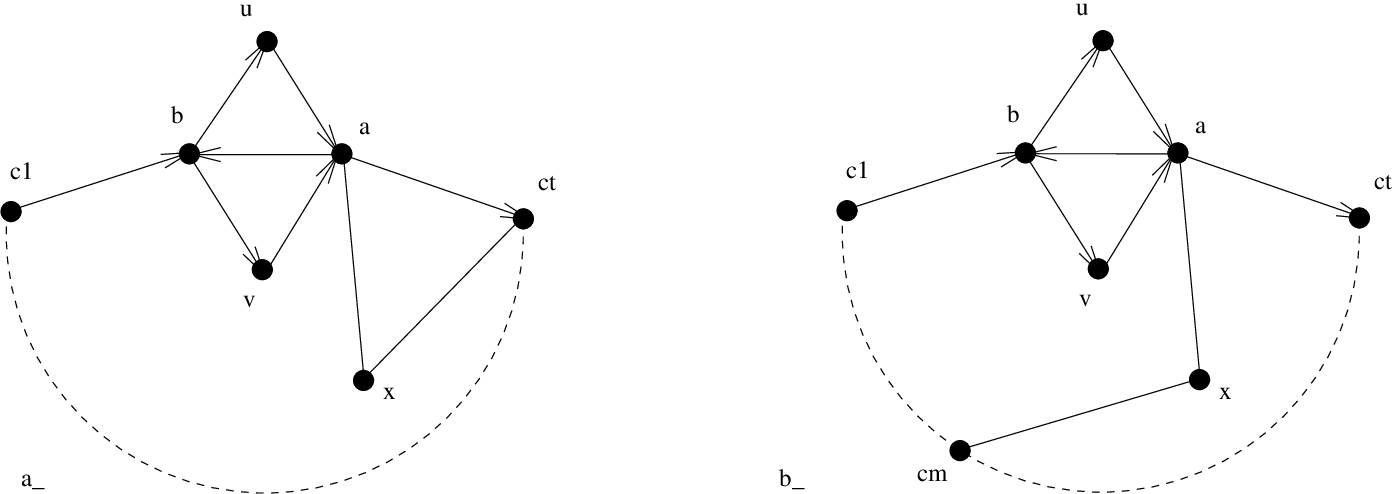,width=0.6\linewidth}
\caption{To the proof of Lemma~\ref{dn}, Case 2. }
\label{e-xabc}
\end{center}
\end{figure}

\medskip
\noindent
{\bf Case 3:} $x$ is connected to both $a$ and $b$. 

An application of Lemma~\ref{non-or} to any simply-laced diagram whose underlying graph is the complete graph on four vertices shows that such a diagram is mutation-infinite. Therefore, considering the subdiagrams $xabu$ and $xabv$ we conclude that $x$ is connected neither to $u$ nor to $v$. Thus, the subdiagram $xabuv$ looks as shown in Fig.~\ref{e-final}(a).

Since the diagram shown in Fig.~\ref{e-final}(b) is mutation-infinite for all directions of arrows incident to $x$, we conclude that $x$ is connected to $c_1$ and $c_{k-3}$, see  Fig.~\ref{e-final}(c). Furthermore, the cycle $bxc_{k-3}$ is oriented, since $u$ is connected to $bxc_{k-3}$ by a unique arrow.
Similarly, the cycle $axc_1$ is oriented, which defines the directions of all arrows in $x\cup \P$. Note that $x$ is not connected to other $c_i\in\C$: in that case either $c_1$ or $c_{k-3}$ would be connected to a non-oriented cycle by a unique arrow in contradiction with Lemma~\ref{non-or}. Now a block decomposition of $\R$ can be obtained in the same way as in the previous cases, see Fig.~\ref{e-final}(d). 

\end{proof}

\begin{figure}[!h]
\begin{center}
\psfrag{x}{\scriptsize $x$}
\psfrag{a_}{\scriptsize $a$}
\psfrag{b_}{\scriptsize $b$}
\psfrag{u}{\scriptsize $u$}
\psfrag{v}{\scriptsize $v$}
\psfrag{c1}{\scriptsize $c_{k-3}$}
\psfrag{ct}{\scriptsize $c_1$}
\psfrag{a}{\small (a)}
\psfrag{b}{\small (b)}
\psfrag{c}{\small (c)}
\psfrag{d}{\small (d)}
\psfrag{e}{\small (e)}
\psfrag{f}{\small (f)}
\epsfig{file=./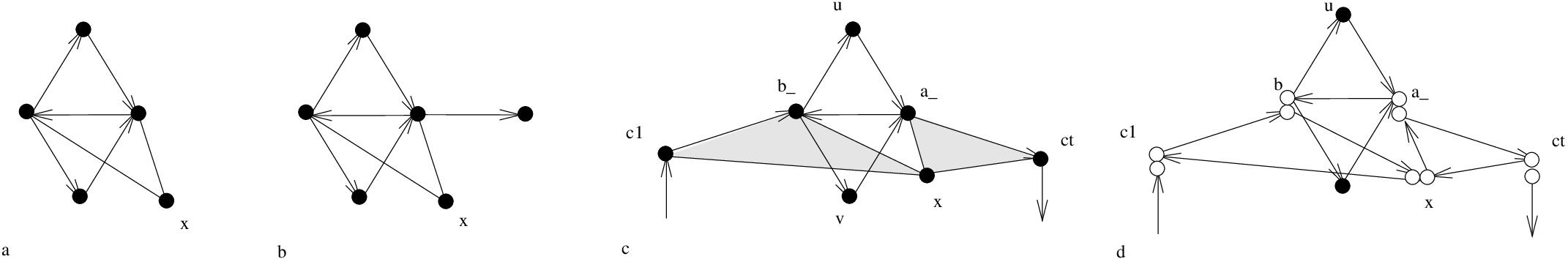,width=0.98\linewidth}
\caption{To the proof of Lemma~\ref{dn}, Case 3. }
\label{e-final}
\end{center}
\end{figure}

We summarize the results of Lemmas~\ref{simple}--\ref{dn} in the following corollary.

\begin{cor}
\label{sum}
Let $\P$ be a simply-laced oriented cycle or a pseudo-cycle of type $\widetilde A_{2,2}$ or $\widetilde D_k$. 
Let $\R=\P\cup x$ be a mutation-finite skew-symmetric risk diagram. Then $\R$ is block-decomposable.

\end{cor}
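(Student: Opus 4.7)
The plan is to derive the statement directly from Lemmas~\ref{simple}--\ref{dn} via a short case analysis based on the type and size of $\P$, with essentially no new work.

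First, I would dispose of the small cases. When $\P$ is of type $\t A_{2,2}$ (order $4$) or $\t D_4$ (order $5$), the risk diagram $\R=\P\cup x$ has at most six vertices, and Lemma~\ref{an} gives block-decomposability directly. Likewise, when $\P$ is a simply-laced oriented cycle of length at most $5$, the order of $\R$ is at most six and the conclusion follows from Proposition~\ref{small} on mutation-finite skew-symmetric diagrams of small order.

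Next, for the remaining cases, which all satisfy $|\P|>4$, I would invoke Lemma~\ref{simple} to conclude that $\R$ is simply-laced: in this range $\P$ itself carries no double arrow (oriented cycles and pseudo-cycles of type $\t D_k$ with $k>4$ are simply-laced), and the lemma further rules out a double arrow connecting $x$ to $\P$. With $\R$ now a mutation-finite simply-laced skew-symmetric risk diagram, Lemma~\ref{l-wheel} yields a block decomposition when $\P$ is a simply-laced oriented cycle, and Lemma~\ref{dn} does so when $\P$ is of type $\t D_k$ with $k\ge 5$.

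I do not expect any real obstacle at this stage: the corollary is explicitly a summary of the preceding three lemmas, and the only verification needed is that each of the three families of $\P$ permitted in the hypothesis falls into exactly one of the cases above. All of the substantive work — the combinatorial restrictions on attachments of $x$ forced by Lemma~\ref{non-or} and by the classification of small mutation-finite diagrams — has already been carried out in Lemmas~\ref{simple}, \ref{l-wheel} and~\ref{dn}, so assembling the corollary is a matter of bookkeeping.
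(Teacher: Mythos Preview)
Your approach matches the paper's: the corollary is stated there without proof, explicitly as a summary of Lemmas~\ref{simple}--\ref{dn}, and your case split is exactly the intended one. One small correction: Proposition~\ref{small} only covers skew-symmetric mutation-finite diagrams of order \emph{less than} $6$, so it does not apply when $\P$ is a simply-laced cycle of length $5$ (since then $|\R|=6$); but this case is already absorbed by your second paragraph, where $|\P|>4$ triggers Lemma~\ref{simple} and then Lemma~\ref{l-wheel}, so the argument is complete once you adjust the cutoff in the first paragraph to $|\P|\le 4$.
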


\begin{lemma}
\label{lE}
Condition {\rm{(C2)}} holds for all risk diagrams of types $\widetilde E_n$, $n=6,7,8$.

\end{lemma}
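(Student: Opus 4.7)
The plan is to reduce the verification of condition (C2) for risk diagrams in $\widetilde E_n$ to cases already settled: Theorem~\ref{bm} in finite type, and Lemmas~\ref{lA} and~\ref{lD} in the relevant block-decomposable affine types. The key enabling fact is the intrinsic nature of condition~(C2): the isomorphism $W_\R\cong W_{\mu_x(\R)}$ depends only on the diagram $\R$ (and its subdiagrams supporting relations of types (R1)--(R4)), not on the ambient diagram in which $\R$ sits.

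First I would reduce the list of pseudo-cycles $\P$ that can appear. Since $\widetilde E_n$ is skew-symmetric, any subdiagram $\R=x\cup\P$ is also skew-symmetric, which forbids every entry of Table~\ref{add-t} involving a non-simple arrow; combined with Table~\ref{subdiagrams}, this leaves three possibilities for $\P$: a simply-laced oriented cycle, a pseudo-cycle of type $\widetilde A_{2,2}$, or a pseudo-cycle of type $\widetilde D_k$. For pseudo-cycles of order at most $2$, condition (C2) has already been checked in the earlier part of Section~\ref{invariance}, so I may assume $|\P|\ge 3$.

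With this in hand, Corollary~\ref{sum} applies and gives that $\R$ is block-decomposable. Since $\R$ is a subdiagram of an affine diagram, Lemma~\ref{subd of aff} forces $\R$ itself to be of finite or affine type. Because $\R$ is also skew-symmetric, the finite-type case restricts $\R$ to mutation types among $A_k,D_k,E_6,E_7,E_8$, while the affine case is restricted further by block-decomposability: the exceptional affine types $\widetilde E_m$ are not block-decomposable, so $\R$ must be of type $\widetilde A_{p,q}$ or $\widetilde D_k$ (the only skew-symmetric block-decomposable affine mutation types, as listed in Table~\ref{surface realizations}). In particular, there is no circularity: $\R$ is never of type $\widetilde E_m$.

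To conclude, in the finite-type case (C2) follows from Theorem~\ref{bm}, as no relations of type (R4) apply and the invariance of the Barot--Marsh presentation under mutation directly gives $W_\R\cong W_{\mu_x(\R)}$. In the affine case, I take $\G=\R$ in Lemma~\ref{lA} (respectively Lemma~\ref{lD}) and regard $\R$ as a risk diagram inside itself; condition (C2) for $\R$ is then precisely what those lemmas guarantee. I expect no real obstacle here: all the serious combinatorial work -- in particular the block-decomposability of $\R$, which is the only place where one has to seriously engage with the structure of the $\widetilde E_n$ mutation class -- has already been absorbed into Corollary~\ref{sum} via Lemmas~\ref{simple}--\ref{dn}, and the remainder is an essentially bookkeeping invocation of previously established results.
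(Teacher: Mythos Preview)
Your proposal is correct and follows essentially the same route as the paper: invoke Corollary~\ref{sum} to get block-decomposability of $\R$, use Lemma~\ref{subd of aff} to force $\R$ to be of finite or affine type, and then cite~\cite{BM} and Lemmas~\ref{lA},~\ref{lD} for the resulting types $A_k$, $D_k$, $\widetilde A_{p,q}$, $\widetilde D_k$. The only cosmetic difference is that the paper also uses block-decomposability to exclude $E_6,E_7,E_8$ from the finite-type list (these are not block-decomposable), whereas you list them and appeal to Theorem~\ref{bm}; either way the argument goes through.
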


\begin{proof}
By Lemma~\ref{subd of aff}, any risk subdiagram of $\widetilde E_n$
is either of finite or of affine type.  By Corollary~\ref{sum}, all these risk subdiagrams are block-decomposable. 
So, any risk diagram  is a block-decomposable skew-symmetric diagram of
finite or affine type, i.e. any risk diagram is of mutation type $A_k$, $D_k$, $\widetilde A_{p,q}$ or $\widetilde D_k$ and is already checked 
in~\cite{BM} or in Sections~\ref{sec a} and~\ref{sec d}.  
Therefore, {\rm{(C2)}} holds for these risk diagrams. 

\end{proof}

We verified conditions {\rm{(C1)}} and {\rm{(C2)}} for all pseudo-cycles and risk diagrams for all affine diagrams. 
By Lemma~\ref{in and out}, this completes the proof of Theorem~\ref{thm-aff}.

\section{Examples of non-isomorphic groups $W$ and $\t W_{\G}$}
\label{difference}

In this section, we show that for every affine Weyl group $W$ except $\t C_n$ and $\t A_{p,1}$ (cf. Remark~\ref{cn}) 
the relations of type (R4) (additional affine relations) are essential. 

Recall that the group $\t W_{\G}$  is obtained from $W_{\G}$ by omitting additional affine relations of type (R4). 

Our aim is to prove that  $\t W_{\G}$ is not invariant under mutations. 
More precisely, we prove the following lemma.

\begin{lemma}
\label{different}
Let $\G$ be one of the diagrams shown in Table~\ref{add-t}, and let $W$ be the corresponding group from the right column of the table. Then $\t W_{\G}$ is not isomorphic to $W$. 

\end{lemma}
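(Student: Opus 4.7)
To establish $\tilde W_\G \not\cong W$ for each of the five diagrams $\G$ listed in Table~\ref{add-t}, my plan is to produce, in each case, a concrete target group $H_\G$ and a homomorphism $\phi_\G\colon \tilde W_\G \to H_\G$ under which the image of the word whose vanishing is the additional affine relation (R4) is a non-identity element of $H_\G$. By Theorem~\ref{thm-aff} we have $W_\G \cong W$, and (R4) holds in $W_\G$ by definition; combining this with the canonical surjection $\tilde W_\G \twoheadrightarrow W_\G$ (which exists because $\tilde W_\G$ has strictly fewer defining relations than $W_\G$), a non-trivial image in $H_\G$ exhibits a non-trivial element of $\ker(\tilde W_\G \twoheadrightarrow W)$, so $\tilde W_\G \not\cong W$.

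For each $\G$, a natural source of $H_\G$ is the Tits reflection representation of the Coxeter system $C_\G$ defined by the relations (R1) and (R2) alone, possibly refined by further quotienting $C_\G$ by the normal closure of the (R3) cycle relations of $\G$. Since for each $\G$ in Table~\ref{add-t} there are only a few oriented chordless subcycles (hence few (R3) relations) to check, this reduction is tractable, and $H_\G$ inherits a concrete linear action on a real vector space with an indefinite symmetric bilinear form.

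The main obstacle is the explicit verification that the word $r$ defining (R4) acts non-trivially in $H_\G$. This can be made uniform by the observation that every (R4) relation in Table~\ref{add-t} takes the form $(\rho_1 \rho_2)^k = e$, where $\rho_1$ and $\rho_2$ are each conjugates of simple reflections (for instance, in the $\widetilde A_{2,2}$ case one has $s_2 s_3 s_4 s_3 s_2 = (s_2 s_3) s_4 (s_2 s_3)^{-1}$, so $\rho_1 = s_1$ and $\rho_2 = (s_2 s_3) s_4 (s_2 s_3)^{-1}$, $k=2$; the other four cases are analogous). Non-triviality in $H_\G$ is then equivalent to showing that $\rho_1$ and $\rho_2$ generate a dihedral subgroup of order strictly greater than $2k$, and in the Tits representation this reduces to computing a single inner product between two explicit root vectors and verifying it differs from $-\cos(\pi/k)$, rather than evaluating the full word $r$. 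For the two low-rank cases ($\widetilde G_2$ and $\widetilde A_{2,2}$) the whole calculation can alternatively be carried out in the Coxeter group $C_\G$ itself, by hand or in a computer algebra system, giving independent verification.
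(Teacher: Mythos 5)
Your overall strategy --- exhibit a homomorphism out of $\t W_\G$ under which the (R4) relator survives, thereby showing that the canonical surjection onto $W$ has nontrivial kernel --- has two genuine gaps. The first is logical: a non-injective surjection $\t W_\G\twoheadrightarrow W$ does \emph{not} by itself imply $\t W_\G\not\cong W$, since a finitely generated group can be isomorphic to a proper quotient of itself (e.g.\ the Baumslag--Solitar group $BS(2,3)$). The missing ingredient is that $W$ is a finitely generated linear group, hence residually finite by Malcev's theorem~\cite{M} and therefore Hopfian, so no surjection onto $W$ from a group isomorphic to $W$ can have nontrivial kernel. The paper invokes this explicitly; your write-up jumps from ``nontrivial element of the kernel'' to ``not isomorphic'' without it.

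The second gap is in the construction of the target $H_\G$. The Tits representation is a representation of the Coxeter group $C_\G$ on relations (R1)--(R2) only, and the cycle relations (R3) generally \emph{fail} in it, so it does not descend to $\t W_\G$ and cannot receive your map $\phi_\G$. Concretely, for the $\t A_{2,2}$ diagram the relation $(t_3t_2t_1t_2)^3=e$ asks the reflections in $\alpha_3$ and $s_2(\alpha_1)=\alpha_1+\alpha_2$ to have product of order $3$; but with the $\infty$-bond between vertices $1$ and $3$ the Tits form gives $B(\alpha_3,\alpha_1+\alpha_2)=-1-\tfrac12=-\tfrac32$, so that product has infinite order and the representation kills no cycle relation. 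Passing to the quotient of $C_\G$ by the (R3) relations, as you propose as a fallback, simply returns $\t W_\G$ itself and forfeits the concrete linear action on which your root-pairing computation depends; one would instead have to build, for each $\G$, a representation of $\t W_\G$ (not of $C_\G$) in which the (R4) word survives, and that is essentially the original difficulty. The paper circumvents both issues at once: it takes the surjection $\varphi\colon\t W_\G\to W$ of Lemma~\ref{seven}, picks a suitable element $h$ (for $\t A_{2,2}$, $h=t_2t_4$), and checks that $\t W_\G/\langle\langle h\rangle\rangle$ and $W/\langle\langle\varphi(h)\rangle\rangle$ are Coxeter groups of visibly different types (typically one infinite affine, the other finite), so $\varphi$ is not an isomorphism, and then Malcev finishes the argument.
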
 

Here is the plan of the proof. By Lemma~\ref{seven}, there is a surjective homomorphism $\varphi:\t W_{\G}\to W$. Our aim is to prove that $\varphi$ is not an isomorphism. According to Malcev~\cite[Theorem XII]{M}, this will imply that $\t W_{\G}$ is not isomorphic to $W$ as soon as $W$ is a finitely generated linear group, which is of course true for Coxeter groups. 

To show that $\varphi$ is not an isomorphism, we consider quotient groups $\t W_{\G}/H$ and $W/\varphi(H)$, where $H$ is the normal closure of a suitable element of $\t W_{\G}$, and see that these groups are not isomorphic.  

We deal with all the diagrams separately.

\subsection{${\mathbf{W=\t A_3}}$,}
$$\G=
\psfrag{4_}{\scriptsize $4$}
\psfrag{2_}{\scriptsize $$}
\psfrag{1}{\scriptsize $1$}
\psfrag{2}{\scriptsize $2$}
\psfrag{3}{\scriptsize $3$}
\psfrag{4}{\scriptsize $4$}
\raisebox{-0.8cm}{\epsfig{file=./pic/rel_B3.eps,width=0.13\linewidth}}
$$
Here 
$$\t W_{\G}=\langle t_1,t_2,t_3,t_4\,|\,t_i^2=(t_1t_2)^3=(t_2t_3)^3=(t_3t_4)^3=(t_4t_1)^3=(t_2t_4)^2=(t_3t_2t_1t_2)^3=(t_3t_4t_1t_4)^3=e\rangle,$$
$$W=\langle s_1,s_2,s_3,s_4\,|\,s_i^2=(s_1s_2)^3=(s_2s_3)^3=(s_3s_4)^3=(s_4s_1)^3=(s_2s_4)^2=(s_1s_3)^2=e\rangle,$$
the epimorphism $\varphi:\t W_{\G}\to W$ is defined by
$$
\varphi(t_1)=s_1,\ 
\varphi(t_2)=s_2,\
\varphi(t_3)=s_4s_2s_3s_2s_4,\
\varphi(t_4)=s_4
$$
Now consider the normal closure $H=\langle t_2t_4\rangle^{\t W_{\G}}$. Then the quotient group 
$$\t W_{\G}/H\cong\langle t_1,t_2,t_3\,|\,t_i^2=(t_1t_2)^3=(t_2t_3)^3=(t_3t_2t_1t_2)^3=e\rangle\cong\t A_2,$$
and
$$W/\varphi(H)=\langle s_1,s_2,s_3\,|\,s_i^2=(s_1s_2)^3=(s_2s_3)^3=(s_1s_3)^2=e\rangle\cong A_3$$
which are clearly not isomorphic.

\subsection{${\mathbf{W=\t D_n}}$,}
$$\G=
\psfrag{u}{\scriptsize $1$}
\psfrag{v}{\scriptsize $2$}
\psfrag{1}{\scriptsize $3$}
\psfrag{2}{\scriptsize $4$}
\psfrag{3}{\scriptsize $5$}
\psfrag{4}{\scriptsize $6$}
\psfrag{n}{\scriptsize $n+1$}
\psfrag{n1}{\scriptsize $n$}
\raisebox{-1.1cm}{\epsfig{file=./pic/rel_D.eps,width=0.2\linewidth}}
$$
\begin{multline*}\t W_{\G}=\langle t_1,\dotsm,t_{n+1}\,|\,t_i^2=(t_it_{i+1})^3=(t_1t_n)^3=(t_2t_{n+1})^3=(t_2t_n)^3=(t_it_j)^2\,({\text{otherwise}})=\\ =(t_1t_2t_nt_2)^2=(t_{n+1}t_2t_nt_2)^2=e\rangle,\end{multline*}
$$W\!=\!\langle s_1,\dots,s_{n+1}\,|\,s_i^2=(s_1s_2)^3\!=\dots=(s_{n-2}s_{n-1})^3\!=(s_2s_{n+1})^3\!=(s_{n-2}s_n)^3\!=(s_is_j)^2\,({\text{otherwise}})=e\rangle,$$
the epimorphism $\varphi:\t W_{\G}\to W$ is defined by
$$
\varphi(t_i)=s_i,\ {\text{for }}i\ne n, \qquad 
\varphi(t_n)=s_{n+1}s_1s_2\cdots s_{n-2}s_ns_{n-2}s_{n-3}\cdots s_1s_{n+1}
$$
Take $H=\langle t_1t_{n+1}\rangle^{\t W_{\G}}$. Then the quotient group 
$$\t W_{\G}=\langle t_1,\dotsm,t_{n}\,|\,t_i^2=(t_it_{i+1})^3=(t_1t_n)^3=(t_2t_n)^3=(t_it_j)^2\,({\text{otherwise}})=(t_1t_2t_nt_2)^2=e\rangle\cong\t A_{n-1},$$
while
$$W/\varphi(H)=\langle s_1,\dots,s_{n}\,|\,s_i^2=(s_1s_2)^3=\dots=(s_{n-2}s_{n-1})^3=(s_{n-2}s_n)^3=(s_is_j)^2\,({\text{otherwise}})=e\rangle\cong D_n$$

\subsection{${\mathbf{W=\t B_3}}$,}
$$\G=
\psfrag{4_}{\scriptsize $4$}
\psfrag{2_}{\scriptsize $2$}
\psfrag{1}{\scriptsize $1$}
\psfrag{2}{\scriptsize $2$}
\psfrag{3}{\scriptsize $3$}
\psfrag{4}{\scriptsize $4$}
\raisebox{-0.8cm}{\epsfig{file=./pic/rel_B3.eps,width=0.13\linewidth}}
$$
Here 
$$\t W_{\G}=\langle t_1,t_2,t_3,t_4\,|\,t_i^2=(t_1t_2)^4=(t_2t_3)^4=(t_3t_4)^3=(t_4t_1)^3=(t_2t_4)^2=(t_1t_2t_3t_2)^2=(t_3t_4t_1t_4)^3=e\rangle,$$
$$W=\langle s_1,s_2,s_3,s_4\,|\,s_i^2=(s_1s_4)^3=(s_2s_4)^4=(s_3s_4)^3=(s_1s_2)^3=(s_2s_3)^2=(s_1s_3)^2=e\rangle,$$
the epimorphism $\varphi:\t W_{\G}\to W$ is defined by
$$
\varphi(t_1)=s_2s_4s_1s_4s_2,\ 
\varphi(t_2)=s_2,\
\varphi(t_3)=s_4s_3s_4,\
\varphi(t_4)=s_3
$$
Now take $H=\langle (t_2t_3)^2\rangle^{\t W_{\G}}$. Then the quotient group 
$$\t W_{\G}=\langle t_1,t_2,t_3,t_4\,|\,t_i^2=(t_1t_2)^4=(t_2t_3)^2=(t_3t_4)^3=(t_4t_1)^3=(t_2t_4)^2=(t_1t_3)^2=e\rangle\cong B_4,$$
while
$$W/\varphi(H)=\langle s_1,s_2,s_3,s_4\,|\,s_i^2=(s_1s_4)^3=(s_2s_4)^2=(s_3s_4)^3=(s_1s_2)^3=(s_2s_3)^2=(s_1s_3)^2=e\rangle\cong A_1\times A_3$$

\subsection{${\mathbf{W=\t B_n}}$,}
$$\G=
\psfrag{u}{\scriptsize $n+1$}
\psfrag{2_}{\scriptsize $2$}
\psfrag{1}{\scriptsize $1$}
\psfrag{2}{\scriptsize $2$}
\psfrag{3}{\scriptsize $3$}
\psfrag{4}{\scriptsize $4$}
\psfrag{n}{\scriptsize $n-1$}
\psfrag{n+1}{\scriptsize $n$}
\raisebox{-0.8cm}{\epsfig{file=./pic/rel_B_.eps,width=0.23\linewidth}}
$$
\begin{multline*}\t W_{\G}=\langle t_1,\dotsm,t_{n+1}\,|\,t_i^2=(t_{n+1}t_1)^4=(t_{n+1}t_n)^4=(t_1t_2)^3=(t_2t_3)^3=\dots=(t_{n-1}t_{n})^3=\\ =(t_nt_1)^3=(t_it_j)^2\,({\text{otherwise}})=(t_{n+1}t_1t_nt_1)^2=e\rangle,\end{multline*}
\begin{multline*}W=\langle s_1,\dots,s_{n+1}\,|\,s_i^2=(s_{n+1}s_1)^4=(s_{1}s_{2})^3=(s_2s_{3})^3=\dots=(s_{n-2}s_{n-1})^3=\\ =(s_{n-2}s_n)^3=(s_is_j)^2\,({\text{otherwise}})=e\rangle,\end{multline*}
similarly to the case of $\t D_n$, the epimorphism $\varphi:\t W_{\G}\to W$ is defined by
$$
\varphi(t_i)=s_i,\ {\text{for }}i\ne n, \qquad 
\varphi(t_n)=s_{n+1}s_1s_2\cdots s_{n-2}s_ns_{n-2}s_{n-3}\cdots s_1s_{n+1}
$$
Take $H=\langle (t_1t_{n+1})^2\rangle^{\t W_{\G}}$. Then the quotient group 
$$\t W_{\G}=\langle t_1,\dotsm,t_{n}\,|\,t_i^2=(t_{n+1}t_n)^4=(t_1t_{2})^3=(t_2t_3)^3=\dots=(t_{n-1}t_n)^3=(t_it_j)^2\,({\text{otherwise}})=e\rangle\cong\t B_{n+1},$$
while
\begin{multline*}W/\varphi(H)=\langle s_1,\dots,s_{n+1}\,|\,s_i^2=(s_{n+1}s_1)^2=(s_{1}s_{2})^3=(s_2s_{3})^3=\dots=(s_{n-2}s_{n-1})^3=\\ =(s_{n-2}s_n)^3=(s_is_j)^2\,({\text{otherwise}})=e\rangle\cong B_n\times A_1\end{multline*}

\subsection{${\mathbf{W=\t G_2}}$,}
$$\G=
\psfrag{1}{\scriptsize $1$}
\psfrag{2}{\scriptsize $2$}
\psfrag{3_}{\scriptsize $3$}
\psfrag{3}{\scriptsize $3$}
\psfrag{4}{\scriptsize $4$}
\raisebox{-0.6cm}{\epsfig{file=./pic/rel-tg2.eps,width=0.11\linewidth}}
$$
Here 
$$\t W_{\G}=\langle t_1,t_2,t_3\,|\,t_i^2=(t_1t_2)^6=(t_1t_3)^6=(t_1t_2t_3t_2)^6=(t_2t_3t_1t_3)^6=(t_3t_1t_2t_1)^3=e\rangle,$$
$$W=\langle s_1,s_2,s_3,s_4\,|\,s_i^2=(s_1s_3)^6=(s_2s_3)^3=(s_1s_2)^2=e\rangle,$$
the epimorphism $\varphi:\t W_{\G}\to W$ is defined by
$$
\varphi(t_1)=s_3s_1s_3,\ 
\varphi(t_2)=s_3s_1s_3s_2s_3s_1s_3,\
\varphi(t_3)=s_3
$$
Now take $H=\langle (t_1t_3)^2\rangle^{\t W_{\G}}$. Then the quotient group 
$$\t W_{\G}=\langle t_1,t_2,t_3\,|\,t_i^2=(t_1t_2)^6=(t_1t_3)^2=(t_2t_3)^3=e\rangle\cong\t G_2,$$
while
$$W/\varphi(H)=\langle s_1,s_2,s_3\,|\,s_i^2=(s_1s_3)^2=(s_2s_3)^3=(s_1s_2)^2=e\rangle\cong A_1\times A_2$$

\section{Generalization for diagrams arising from unpunctured surfaces and orbifolds}
\label{unpunctured}

Let $\G$ be a diagram arising from an unpunctured surface or orbifold. We construct a group $W_{\G}$ in the similar way as before (but with one more additional type of relations, see Section~\ref{surf-construction}) and show that this group is invariant under mutations. 
In this case $W_{\G}$ is not a Coxeter group anymore, but a quotient of some Coxeter group (by relations of types (R3)--(R5), see below).

\subsection{Construction of the group $W_{\G}$}
\label{surf-construction}

\begin{defin}
\label{def surf}
Given a diagram $\G$ of order $n$ arising from a triangulated unpunctured surface or orbifold,
$W_\G$ is a group with  
\begin{itemize}
\item generators $s_1,\dots,s_n$ corresponding to the vertices of $\G$;
\item relations:
\begin{itemize}
\item[(R1)] $s_i^2=e$ for $i=1,\dots,n$;
\item[(R2)] $(s_is_j)^{m_{ij}}=e$ for all vertices $i$, $j$ not joined by an arrow labeled by $4$ (where $m_{ij}$ are defined in Section~\ref{aff group def});
\item[(R3)] cycle relation for every chordless oriented cycle (see relations of type (R3) in Section~\ref{aff group def});
\item[(R4)] four types of additional relations for affine diagrams from Table~\ref{add-t};
\item[(R5)] additional relations for a handle: 
$$
(s_{1}s_{2}s_{3}s_{4}s_{3}s_{2})^3=e
\text{\ and \ }
(s_{1}s_{2}s_{3}s_{4}s_{5}s_{4}s_{3}s_{2})^2=e
$$
for all subdiagrams  of type $\H_0$ and $\H$ shown in Fig.~\ref{rel-handle};

\end{itemize}

\end{itemize}
\end{defin}

\begin{figure}[!h]
\begin{center}
\psfrag{H5}{ $\H$}
\psfrag{O5}{ $O^5$}
\psfrag{H4}{ $\H_0$}
\psfrag{4}{\scriptsize $4$}
\psfrag{2}{\scriptsize $2$}
\psfrag{s1}{\scriptsize ${1}$}
\psfrag{s2}{\scriptsize ${2}$}
\psfrag{s3}{\scriptsize ${3}$}
\psfrag{s4}{\scriptsize ${4}$}
\psfrag{s5}{\scriptsize ${5}$}
\epsfig{file=./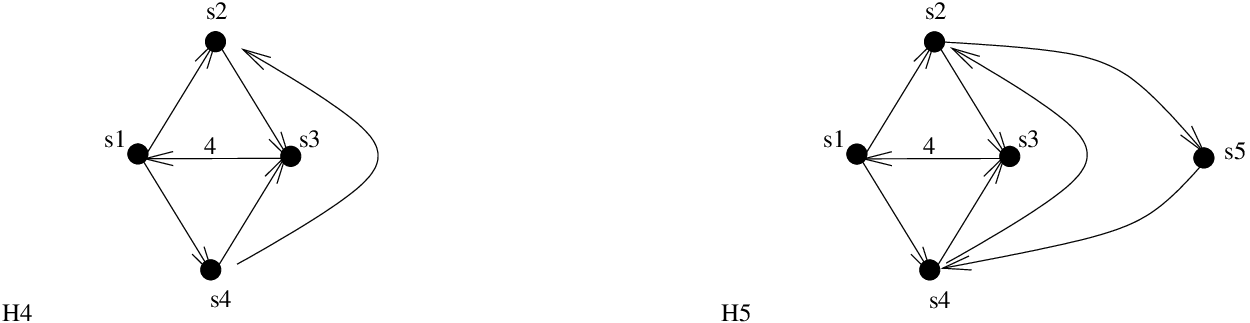,width=0.5\linewidth}
\caption{Additional relations: the diagram $\H$ corresponds to a handle with two marked points at the boundary component. The diagram $\H_0$ corresponds to a handle with one marked point (i.e., the boundary corresponds to vertex $5$ of $\H$).}
\label{rel-handle}
\end{center}
\end{figure}

\begin{remark}
\label{HH}
(i) Relations (R1)-(R4) are the same as in the construction of the group for the affine case.

\medskip
\noindent
(ii) The diagrams for relations (R5) correspond to a handle with one and two marked points at the boundary component.

\medskip
\noindent
(iii) The diagram $\H_0$ is a subdiagram of $\H$, and the relation (R5) for $\H_0$ is a corollary of the relation (R5) for $\H$. Furthermore, it is easy to observe that if the diagram $\H_0$ is a subdiagram of a bigger diagram $Q$ originating from a triangulation of a surface or orbifold, then there exists $\H\subset \Q$ containing $\H_0$. Together with the observation above this implies that the only diagram for which the first relation in (R5) needs to be applied is $\H_0$ itself.

%
%

\medskip
\noindent
(iii)
The second relation (R5) is equivalent to any of the following three relations:
$$
(s_{1}s_{4}s_{3}s_{2}s_{5}s_{2}s_{3}s_{4})^2=e,\qquad
(s_{3}s_{2}s_{1}s_{4}s_{5}s_{4}s_{1}s_{2})^2=e,\qquad
(s_{3}s_{4}s_{1}s_{2}s_{5}s_{2}s_{1}s_{4})^2=e.
$$

\end{remark}




\subsection{Invariance of the group $ W_{\G}$}

\begin{theorem}
\label{thm surf/orb}
Let $\G$ be a diagram arising from an unpunctured surface or orbifold, and let 
 $W_{\G}$ be the group defined as above. Then $W_{\G}$ is invariant under mutations of $\G$.

\end{theorem}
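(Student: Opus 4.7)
The plan is to follow the same strategy as in the proof of Theorem~\ref{thm-aff}: extend the notion of pseudo-cycle to include subdiagrams of type $\H$ (which support relation (R5)), and then verify Conditions (C1) and (C2) of Lemma~\ref{in and out} for all pseudo-cycles and all risk diagrams that can appear inside a diagram arising from an unpunctured surface or orbifold. The framework of Section~\ref{invariance} (Lemmas~\ref{one way}, \ref{in and out}, \ref{outside}, \ref{order}) transfers verbatim, because its derivation uses only the general formalism of generators changing as $s_i' = s_x s_i s_x$ along incoming arrows, never the specific list of relations.

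First I would dispose of pseudo-cycles of types (R1)--(R4): Condition (C1) for these is already Corollary~\ref{all-ps}, whose proof made no use of the ambient affine setting. Condition (C1) for the new pseudo-cycle $\H$ is a finite explicit verification: one mutates $\H$ at each of its five vertices, identifies each resulting $\mu_i(\H)$ inside the mutation class of $\H$, and checks that the two handle relations, combined with the cycle and (R1)--(R2) relations of $\mu_i(\H)$, are equivalent to the handle relations of $\H$ rewritten via the substitution $s_j \mapsto s_xs_js_x$. Here the geometric viewpoint is essential: the diagram $\H$ represents an ideal triangulation of a genus-$1$ surface with one boundary component and one marked point, so the five mutations correspond to the five flips of interior edges, and symmetries of this triangulation considerably reduce the number of cases.

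Next I would check Condition (C2) for risk diagrams $\R = x \cup \P$. The key observation is that since $\G$ arises from a triangulated unpunctured surface/orbifold, so does every diagram in its mutation class (flips preserve the class), and hence every subdiagram $\R$ is block-decomposable and corresponds to a piece of the triangulation. When $\P$ is a pseudo-cycle of type (R1)--(R4), each block decomposition of $\R$ realizes a small surface/orbifold patch glued to the patch of $\P$; since all affine pseudo-cycle types $\widetilde A_{2,2}$, $\widetilde B_n$, $\widetilde D_n$, $\widetilde G_2$ and all cyclic pseudo-cycles from Table~\ref{short cycles} correspond to explicit elementary surfaces, the possibilities for $x$ are the same ones already enumerated in Sections~\ref{sec a}--\ref{sec bc} in the affine case (with the punctured configurations discarded), and Lemma~\ref{order} reduces them to what is already proved. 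Configurations that only arise in the surface/orbifold setting (e.g.\ $x$ glued to a triangulated annulus via a new boundary triangle) are handled by the same block-by-block arguments used in the proofs of Lemmas~\ref{lA}--\ref{lBC}.

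The main obstacle will be Condition (C2) for risk diagrams of the form $\R = x\cup \H$. Here one must enumerate all ways a single extra vertex can sit in a triangulated unpunctured surface/orbifold containing a handle piece $\H$. Geometrically, $x$ corresponds to an arc of the triangulation meeting $\H$ along at most two boundary edges (there is only one boundary component of the handle, with one marked point), which constrains $x$ to be joined to $\H$ only through its unique ``outer'' vertex, plus its two neighbors. The analysis must show that in each such configuration the mutation $\mu_x$ preserves both handle relations together with any new cycle, (R4), or handle relations created on the other side. I expect the technical core to be the case when $\mu_x$ produces a new subdiagram of type $\H$ or of affine type (coming from $x$ merging with part of the handle), and verifying that the handle relations of $\R$ imply, and are implied by, the handle/affine/cycle relations of $\mu_x(\R)$; once handled, combining this with Steps 1--3 above and invoking Lemma~\ref{in and out} completes the proof.
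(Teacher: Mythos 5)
Your overall strategy (extend pseudo-cycles to include $\H$, then verify (C1) and (C2) via Lemma~\ref{in and out}) is the same as the paper's, and your treatment of (C1) is fine. But there is a genuine gap in the (C2) part: you identify the risk diagrams $\R=x\cup\H$ as ``the main obstacle'' and then only sketch a plan for them (``I expect the technical core to be\dots''), whereas the point of the paper's argument is that this case is \emph{empty}. The diagram $\H$ contains an arrow of weight $4$, which in a block decomposition of any ambient unpunctured diagram must be glued from two blocks; combined with block-decomposability of $\R$ and the requirement that $x$ be joined to $\P$ by both an incoming and an outgoing arrow, one checks that no risk diagram contains $\H$ at all, and that the pseudo-cycle of type $\widetilde A_{2,2}$ is contained in exactly one new risk diagram, namely $\mu_5(\H)$ --- whose condition (C2) reduces, after the mutation $\mu_x$, to condition (C1) for $\H$ via Lemma~\ref{one way}. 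Your geometric heuristic (that $x$ attaches to the outer vertex $5$ ``plus its two neighbors'') is also not right: the arcs $1$--$4$ are interior to the handle, so an outside arc can share a triangle only with arc $5$, and two arrows between $x$ and $5$ of opposite orientation cancel, so $x$ can never carry both an incoming and an outgoing arrow to $\H$.

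You also miss the structural observation that makes the rest of the case analysis collapse: a chordless oriented cycle of length greater than $3$ in a block-decomposable diagram forces a puncture on the corresponding surface or orbifold (Lemma~\ref{cycle->puncture}). Consequently the only pseudo-cycles occurring in an unpunctured $\G$ are two-vertex diagrams, oriented triangles, $\widetilde A_{2,2}$, and $\H$; the types $\widetilde B_n$, $\widetilde D_n$, $\widetilde G_2$ and the longer cycles of Table~\ref{short cycles} simply do not appear. You gesture at ``discarding punctured configurations,'' but without this lemma your plan still commits you to re-running the block-by-block analyses of Sections~\ref{sec a}--\ref{sec bc} for pseudo-cycle types that never arise, and, more seriously, to an unexecuted case analysis for $x\cup\H$. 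As written, the proposal is a plausible outline rather than a proof.
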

  
Let us define pseudo-cycles and risk diagrams in the same way as for affine diagrams: {\it pseudo-cycles} are supports of relations (R1)--(R5), and {\it risk diagrams} are diagrams of the form $x\cup\P$, where $\P$ is a pseudo-cycle, and $x$ is connected to $\P$ by at least one incoming and one outgoing arrow.

 Now note that the proofs of Lemmas~\ref{in and out} and~\ref{outside} do not use the property of $\G$ to be of affine type. Therefore, to prove Theorem~\ref{thm surf/orb} we can use exactly the same strategy as in the affine case: we list all pseudo-cycles, find all risk subdiagrams for each of them and check  conditions {\rm{(C1)}} and {\rm{(C2)}} of Lemma~\ref{in and out}. 

\begin{lemma}
\label{cycle->puncture}
Let $\G$ be a diagram arising from a triangulated unpunctured surface.
Then $\G$ contains no oriented chordless cycles of length bigger than 3.

Moreover, the same holds for diagrams arising from triangulated unpunctured orbifolds.

\end{lemma}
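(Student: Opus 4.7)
The plan is to combine Lemma~\ref{cycles} with a geometric analysis of how an oriented cycle in $\G$ manifests in the triangulation $\mathcal{T}$ of the unpunctured surface or orbifold $S$. Let $\C: i_0 \to i_1 \to \cdots \to i_{d-1} \to i_0$ be a chordless oriented cycle of length $d \ge 4$ in $\G$, and let $\alpha_k$ denote the arc of $\mathcal{T}$ corresponding to the vertex $i_k$. Each arrow $i_k \to i_{k+1}$ records that $\alpha_k$ and $\alpha_{k+1}$ are two sides of a common triangle $T_k$ of $\mathcal{T}$, arranged counter-clockwise. Tracking the counter-clockwise orientation across the shared arc $\alpha_{k+1}$ of $T_k$ and $T_{k+1}$, one sees that the endpoint of $\alpha_{k+1}$ at which $\alpha_k$ and $\alpha_{k+1}$ meet in $T_k$ coincides with the endpoint at which $\alpha_{k+1}$ and $\alpha_{k+2}$ meet in $T_{k+1}$. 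Iterating, all $d$ triangles $T_0, \dots, T_{d-1}$ meet cyclically at a single vertex $v$ of $\mathcal{T}$.

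Since $d \ge 4$ and the $d$ triangles close up into a complete cycle around $v$, there is no angular gap at $v$, so $v$ cannot be a boundary marked point and must be an interior vertex of $\mathcal{T}$. In an unpunctured surface this already yields a contradiction. For an unpunctured orbifold, the only remaining interior vertices are orbifold points of order $2$, and I would rule these out according to the two cases of Lemma~\ref{cycles}. If $\C$ is simply-laced, then none of the arcs $\alpha_k$ can be pending, since a pending arc is joined to its neighbours within any triangle by arrows of weight $2$ or $4$ in $\G$; hence $v$ cannot be an orbifold point. If $\C$ contains a non-simple arrow, then by Lemma~\ref{cycles} it coincides with one of the cycles listed in Table~\ref{short cycles}: among those of length at least $4$, rows $8$--$10$ and $13$ correspond to exceptional mutation types which, by Theorem~\ref{class}, do not occur as subdiagrams of surface/orbifold diagrams, while rows $6$ and $7$ correspond to triangulations in which the cyclic arrangement of $d$ triangles around $v$ forces $v$ to be a puncture rather than an orbifold point.

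The main technical obstacle is establishing the common-vertex claim rigorously when orbifold triangles enter the picture, since the counter-clockwise convention across a pending arc needs to be tracked with care. Once this is settled, the remainder of the argument follows the clean dichotomy above: simply-laced chordless oriented cycles of length at least $4$ force a puncture on combinatorial grounds (pending arcs carry non-simple arrows), and non-simply-laced ones are pinned down to the short list in Table~\ref{short cycles}, where a case-by-case inspection identifies the common vertex $v$ with a puncture in each surface/orbifold triangulation.
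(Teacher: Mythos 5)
Your proposal is correct and rests on the same geometric mechanism as the paper's proof: a chordless oriented cycle of length $d\ge 4$ forces the $d$ triangles carrying its arrows to close up into a full fan around a common interior vertex, which must therefore be a puncture (this is exactly the content of Figure~\ref{puncture}). The difference is organisational. The paper runs the whole argument through block decompositions: for an unpunctured surface only blocks of types I and II occur, and for an unpunctured orbifold it notes that a block of type $\widetilde{\rm{V}}_{12}$ has a single white vertex and so cannot contribute an arrow to a chordless oriented cycle spanning several blocks, after which gluing the remaining blocks (I, II, $\widetilde{\rm{IV}}$) cyclically produces the puncture in one stroke. You instead track the counter-clockwise convention triangle by triangle, and for orbifolds you split into simply-laced cycles (where pending arcs are excluded because they would carry arrows of weight $2$ or $4$, so the common vertex is an ordinary interior marked point) and non-simply-laced ones (where Lemma~\ref{cycles} and Table~\ref{short cycles} reduce everything to a finite check of rows $6$--$13$). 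Both routes work; the block-decomposition language is precisely what disposes of the ``technical obstacle'' you flag, since it packages the orientation bookkeeping across pending arcs into the fixed local pictures of Table~\ref{blocks}, whereas your fallback to the classification of non-simple oriented cycles reaches the same conclusion at the cost of a case analysis. One detail worth making explicit in your version: chordlessness is what guarantees that the triangles $T_0,\dots,T_{d-1}$ are pairwise distinct (three consecutive arcs lying in one triangle would create a chord), so the fan really consists of $d\ge 4$ distinct sectors and leaves no angular gap at $v$.
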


\begin{proof}
First suppose that $\G$ comes from a triangulated surface.
Then $\G$ is block-decomposable. Since the surface is unpunctured, the list of possible blocks in the decomposition is exhausted by blocks of type I and II (both corresponding to ordinary, non-self-folded triangles).
If these blocks are arranged to make an oriented cycle (not composing a single block) then the corresponding triangles make a circular neighborhood of a common vertex (see Fig.~\ref{puncture}), so this turns into a puncture which is not allowed by the assumption.

Now, suppose that $\G$ comes from a triangulated unpunctured orbifold. Then the block-decomposition of $\G$ consists of blocks
of types I, II, $\widetilde{\rm{IV}}$ and $\widetilde{\rm{V}}_{12}$. Furthermore, if $\C\subset \G$ is an oriented cycle, then  no block of type
 $\widetilde V_{12}$ has an arrow in $\C$ (since this block has only one white vertex).
Let $\C'$ be a subdiagram of $\G$ spanned by all blocks having an arrow in $\C$. Constructing a triangulation corresponding to $\C'$ we get a puncture again, see  Fig.~\ref{puncture}.

\end{proof}

\begin{figure}[!h]   
\begin{center}
\epsfig{file=./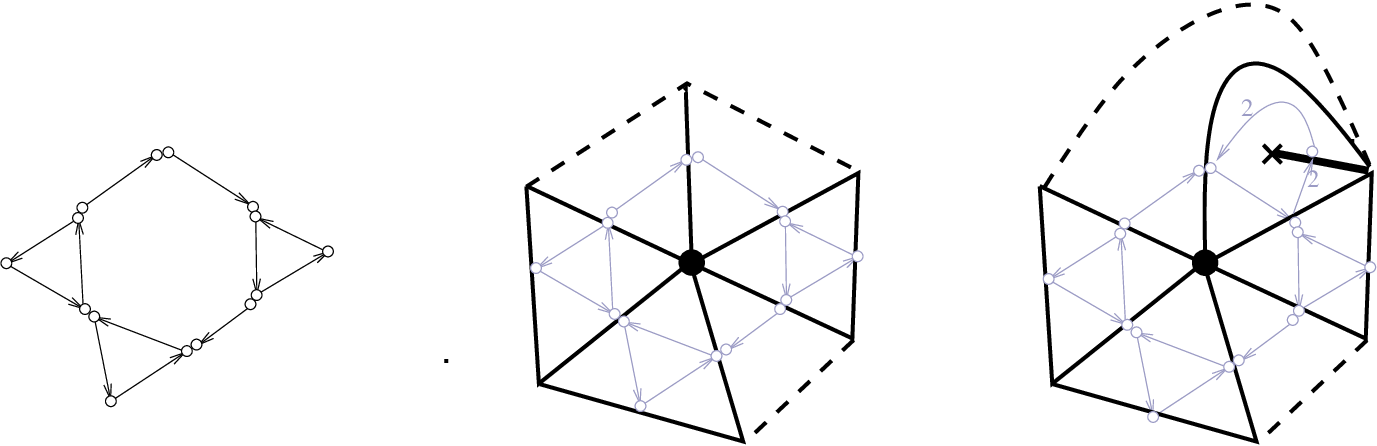,width=0.9\linewidth}
\caption{Lemma~\ref{cycle->puncture}: A long oriented cycle in the diagram corresponds to a puncture on the surface/orbifold (some of the vertices or edges of the triangles in the figure may coincide).}
\label{puncture}
\end{center}
\end{figure}

In view of Lemma~\ref{cycle->puncture}, any pseudo-cycle in $\G$  is either a subdiagram of order at most $3$, or of one of four additional 
(affine) types in Table~\ref{add-t}, or the diagrams $\H_0$ and $\H$ in Fig.~\ref{rel-handle}. 
Moreover, three of five  additional affine types are diagrams of mutation type $\widetilde D_n$ or $\widetilde B_n$, thus ones arising from a punctured surface/orbifold. 
One is of mutation type $\widetilde G_2$, so does not arise from surfaces or orbifolds.
Hence, in the unpunctured case we only need to check the following types of pseudo-cycles:
\begin{itemize}
\item two-vertex subdiagrams;  
\item oriented triangles; 
\item additional affine pseudo-cycle of mutation type $\widetilde A_{2,2}$;
\item diagrams $\H_0$ and $\H$. 

\end{itemize}

\begin{lemma}
\label{1 for handle}
Condition {\rm{(C1)}} of Lemma~\ref{in and out} holds for $\H_0$ and $\H$. 

\end{lemma}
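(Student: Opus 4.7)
The plan is to verify Condition (C1) for the 5-vertex diagram $\H$ by direct computation, following the strategy of Example~\ref{ex-gt2} and Lemma~\ref{condition1}. By Lemma~\ref{one way}, for each $x\in\H$ we only need to check that every defining relation of $W_{\mu_x(\H)}$ follows from those of $W_\H$ after the generator substitution $s_i\mapsto s_xs_is_x$ for arrows $i\to x$ and $s_i\mapsto s_i$ otherwise. A further reduction uses the evident symmetry of $\H$ that exchanges vertices $2$ and $4$ and interchanges the two relations of type (R5); this identifies the mutation at $2$ with the mutation at $4$, so it suffices to handle the representative cases $x=1,2,3,5$.

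For each representative $x$, I would carry out three steps: (i) draw $\mu_x(\H)$ and enumerate all its pseudo-cycles, i.e.\ all chordless oriented cycles (supports of (R3)), all subdiagrams from Table~\ref{add-t} (supports of (R4)), and any surviving handle subdiagrams (supports of (R5)); (ii) substitute the mutated generators into each such relation and rewrite it as a word in the generators of $W_\H$; (iii) reduce that word to the identity using (R1)--(R5) of $W_\H$. Throughout, it is helpful to note that the two (R5) relations can be rewritten as
\[
(s_1\cdot ws_5w^{-1})^3=e,\qquad w\in\{s_2s_3s_4,\ s_4s_3s_2\},
\]
so that they express braid-length-$3$ relations between $s_1$ and two distinct conjugates of $s_5$; this form is well suited to the conjugation moves arising from mutation, in the same way as in the proof of Lemma~\ref{reversion}.

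The main obstacle is that mutations at the ``interior'' vertices of $\H$ can create new long oriented cycles or new affine subdiagrams (of types appearing in Table~\ref{add-t}) in $\mu_x(\H)$, whose corresponding (R3) or (R4) relations are \emph{not} among the defining relations of $W_\H$ and must therefore be deduced from the two (R5) relations. Conversely, when $\mu_x(\H)$ is again a triangulation of a handle and inherits its own pair of (R5) relations, these new relations must be derived from the old ones. Both derivations are lengthy since they involve expanding cubes of length-$8$ words, but each individual rewrite is a routine application of (R1), (R2), and one of the two (R5) relations of $W_\H$, exactly as in the calculation displayed in Example~\ref{ex-gt2}. Thus the verification is entirely mechanical; the only real content is the bookkeeping needed to keep the four (or eight) cases organised.
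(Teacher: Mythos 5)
Your proposal takes essentially the same route as the paper: the paper's entire proof of this lemma is the single sentence that the verification is straightforward, i.e.\ exactly the direct generator-substitution computation in the style of Example~\ref{ex-gt2} that you outline. Your extra organisational devices (the $2\leftrightarrow 4$ symmetry reducing the cases, and rewriting the (R5) relations as $(s_1\, w s_5 w^{-1})^3=e$) are reasonable bookkeeping aids that go beyond what the paper records, but they do not change the nature of the argument.
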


The proof of the lemma is straightforward.
Together with the result of Lemma~\ref{condition1} the lemma implies that {\rm{(C1)}} holds for all 
pseudo-cycles that can be found in a diagram arising from an unpunctured surface/orbifold.

Our next step is to find all risk diagrams for all pseudo-cycles.

\begin{lemma}
\label{risk for handle}
Let $\G$ be a diagram  arising from an unpunctured surface/orbifold, and let $\R$ be its risk subdiagram.
Then $\R$ is either a risk diagram for some affine diagram, or $\R=\H$  or $\R=\mu_5(\H)$ 
(where  $\mu_5(\H)$ is the diagram on Fig.~\ref{mu5H5} obtained from $\H$ by one mutation).

\end{lemma}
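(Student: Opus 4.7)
My plan is a case analysis on the type of pseudo-cycle $\P$ appearing in the risk diagram $\R = x \cup \P$. The list of pseudo-cycles available in an unpunctured-surface/orbifold diagram $\G$ was just narrowed down to four classes: two-vertex subdiagrams, oriented triangles (from Lemma~\ref{cycles} combined with Lemma~\ref{cycle->puncture}), the affine pseudo-cycle $\widetilde A_{2,2}$ from Table~\ref{add-t}, and the handle diagram $\H$. For each such $\P$ I will show that any risk diagram $\R = x \cup \P$ either embeds in an affine diagram, and hence is a risk diagram for an affine diagram already treated in Section~\ref{invariance}, or else coincides with $\mu_5(\H)$.

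For $|\P|\le 3$ and for $\P = \widetilde A_{2,2}$, the risk diagram $\R$ has order at most $5$. Since $\G$ is mutation-finite and block-decomposable, any such $\R$ is itself block-decomposable (using Proposition~\ref{small} for the smallest orders and Lemma~\ref{an} for $\P = \widetilde A_{2,2}$). I would go through all admissible block decompositions of $\R$ built from blocks of type ${\rm{I}}$, ${\rm{II}}$, $\widetilde{\rm{IV}}$, or $\widetilde{\rm{V}}_{12}$ (the types realizable on unpunctured pieces), use Table~\ref{double edges} to restrict how double arrows can arise, and verify that each configuration embeds in a diagram of mutation type $\widetilde A_{p,q}$, $\widetilde B_n$, $\widetilde C_n$, or $\widetilde D_n$, all of which are affine and already handled in Sections~\ref{sec a}--\ref{sec bc}. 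The diagram $\mu_5(\H)$ is expected to appear here as the unique exception: a $5$-vertex block-decomposable risk diagram relative to a $\widetilde A_{2,2}$ pseudo-cycle which is not a subdiagram of any affine one — geometrically, it corresponds to a one-flip modification of the standard triangulation of a handle.

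The core of the argument, and the main obstacle, is the case $\P = \H$, where $\R$ has order $6$. Here I would exploit the geometric picture: $\H$ is the diagram of a triangulated handle whose boundary edge corresponds to vertex $5$. An additional arc $x$ in an unpunctured triangulated surface/orbifold can only meet $\H$ through arcs incident to this boundary, which constrains $x$ to attach to $\H$ essentially via vertex $5$ and its two triangle-neighbours. Using Lemma~\ref{non-or} and the block classification of Table~\ref{blocks} to exclude configurations producing oriented cycles of length $\ge 4$, new punctures, or non-block-decomposable subdiagrams, I would enumerate the finitely many possibilities for $\R$ and show that in each case either $x$ is a sink or a source of $\R$ (so $\R$ is not a risk diagram at all), or $\R$ embeds in an affine mutation class of type $\widetilde D_k$ or $\widetilde B_k$ already handled. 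The main technical difficulty will be making this enumeration exhaustive, combining geometric arguments about how surfaces can be glued along the boundary of a handle with combinatorial constraints coming from Tables~\ref{blocks} and~\ref{double edges}, and verifying that no new risk diagram appears beyond $\mu_5(\H)$.
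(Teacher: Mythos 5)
Your plan follows essentially the same route as the paper: a case analysis on the pseudo-cycle type ($|\P|\le 3$, $\widetilde A_{2,2}$, $\H$), block-decomposability of $\R$ inherited from $\G$, and an enumeration showing everything except $\mu_5(\H)$ already occurs inside an affine diagram, with the paper handling the $\widetilde A_{2,2}$ and $\H$ cases via the observation that their double arrow must be glued from two blocks. One small simplification you could note for your hardest case: $\H$ comes from a genus-one surface, so it is neither of finite nor of affine type, and by Lemma~\ref{subd of aff} no diagram containing it can embed in an affine diagram — hence your second alternative there is vacuous and the only thing to prove is that no $x$ attaches to $\H$ with both an incoming and an outgoing arrow, exactly as the paper concludes.
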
 

\begin{proof}
Let $\P$ be a pseudo-cycle and $\R=\!x\cup\P\!$. 
If $\P$ has two or three vertices we list all block-decomposable diagrams with 3 or 4 vertices respectively
(and choose those of them having $\P$ as a subdiagram) and verify explicitly that they all appear as subdiagrams of diagrams of affine type.

For $\P$ of type $\widetilde A_{2,2}$ or of type  $\H$ 
we note that $\P$ has an arrow labeled by $4$, so this arrow is obtained by gluing two blocks. 
Keeping in mind that $\R$ is block-decomposable and that the vertex $x$ of a risk diagram should be connected to $\P$ by both an incoming and an outgoing arrow, it is easy to see that the pseudo-cycle $\H$ does not belong to any risk diagram, and the pseudo-cycle $\widetilde A_{2,2}$ 
belongs to the risk diagram $\mu_5(\H)$ only.

Finally, for $\P$ of type $\H_0$, $\P$ is contained in $\H$ (see Remark~\ref{HH}(iii)), and the only vertex of $\G$ connected to vertices of $\P$ is the remaining vertex of $\H$: this can be easily seen from the block decomposition. This implies that the risk diagram coincides with $\H$.

\end{proof}

\begin{figure}[!h]
\begin{center}
\psfrag{mu5H5}{ $\mu_5(\H)$}
\psfrag{4}{\scriptsize $4$}
\psfrag{2}{\scriptsize $2$}
\psfrag{s1}{\scriptsize $1$}
\psfrag{s2}{\scriptsize $2$}
\psfrag{s3}{\scriptsize $3$}
\psfrag{s4}{\scriptsize $4$}
\psfrag{s5}{\scriptsize $5$}
\epsfig{file=./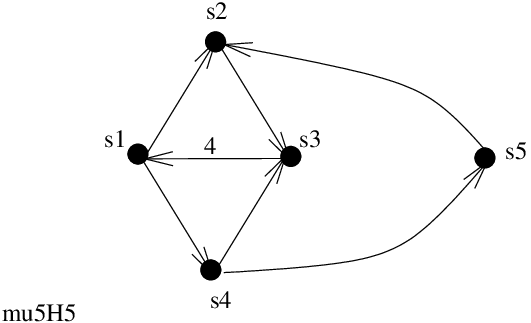,width=0.25\linewidth}
\caption{The diagram $\mu_5(\H)$ obtained by a mutation of $\H$.}
\label{mu5H5}
\end{center}
\end{figure}

\begin{lemma}
\label{2 for handle}
Condition {\rm{(C2)}} of Lemma~\ref{in and out} holds for all risk subdiagrams of diagrams arising from unpunctured surfaces and orbifolds.

\end{lemma}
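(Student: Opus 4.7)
The approach is a straightforward case analysis based on Lemma~\ref{risk for handle}, which tells us that every risk subdiagram $\R$ of a diagram arising from an unpunctured surface or orbifold is either a risk diagram for some affine diagram, or the specific diagram $\mu_5(\H)$. Each of these two cases will be handled separately.

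In the first case, Condition (C2) has already been established by the verifications in Section~\ref{invariance} that go into the proof of Theorem~\ref{thm-aff}. The only point to observe is that in the unpunctured setting the new handle relation (R5) is not triggered on any such $\R$: by Lemma~\ref{risk for handle}, the diagram $\H$ underlying (R5) does not appear inside it. Consequently the presentations of $W_\R$ and of $W_{\mu_x(\R)}$ coincide with those used in the affine proof, and the arguments transfer verbatim.

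The substantive case is $\R=\mu_5(\H)$. Here the pseudo-cycle $\P\subset\R$ is of type $\widetilde A_{2,2}$ on four vertices, and $x$ is the fifth vertex; since $\mu_5$ is an involution, $\mu_x(\R)$ is precisely the handle diagram $\H$. The task thus reduces to showing that, under the change of generators $s_i\mapsto s_xs_is_x$ on the incoming arrows at $x$, the defining relations of $W_\R$ — namely (R1)--(R3) together with the $\widetilde A_{2,2}$ relation of type (R4) — are equivalent to the defining relations of $W_\H$, that is (R1)--(R3) together with the pair of handle relations (R5). This is an explicit word calculation in the spirit of Example~\ref{ex-gt2}: one derives each of the two handle relations from the substituted $\widetilde A_{2,2}$ relation modulo (R1)--(R3), and conversely derives the substituted $\widetilde A_{2,2}$ relation from (R5) together with (R1)--(R3) on $\H$.

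The main obstacle is the bookkeeping in this explicit computation: the relators involved are relatively long words in five generators, and one must cycle, conjugate, and repeatedly apply the short cycle relations available in $\H$ and $\mu_5(\H)$ to reduce one to the other. The symmetry of the $\widetilde A_{2,2}$ relation recorded in Section~\ref{redundance} together with the reversal symmetry of Lemma~\ref{reversion} each cut the required checks in half, so in practice one verifies a single implication on a single handle relation. It is precisely this case that forces relation (R5) into Definition~\ref{def surf}: without it, mutation invariance across the flip $\H\leftrightarrow\mu_5(\H)$ would fail, in direct parallel with the role played by the type~(R4) relations in the affine setting.
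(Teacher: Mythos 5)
Your proposal is correct and follows essentially the same route as the paper: reduce via Lemma~\ref{risk for handle} to the affine risk diagrams (already checked) and to $\R=\mu_5(\H)$, then observe that since $\mu_x(\R)=\H$, condition {\rm{(C2)}} for $\R$ is exactly the instance of {\rm{(C1)}} for the pseudo-cycle $\H$ at the vertex $5$. The only difference is that the paper simply cites Lemma~\ref{1 for handle} for this last check rather than re-describing the word computation, which is where that calculation actually lives.
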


\begin{proof}
By Lemma~\ref{risk for handle}, we only need to check risk diagrams of type  $\H$ and $\mu_5(\H)$.
Thus, {\rm{(C2)}} for this risk diagram is already checked as {\rm{(C1)}} for pseudo-cycle  $\H$.

\end{proof}

Lemmas~\ref{1 for handle} and~\ref{2 for handle} imply Theorem~\ref{thm surf/orb}.

%

\begin{remark}
Unlike to the affine case, the group  $W_{\G}$ for $\G$ arising from an unpunctured surface or orbifold  is not a Coxeter group but a 
quotient of some Coxeter group. 

\end{remark}

\begin{question}
(i)
Given two mutationally non-equivalent diagrams $\G_1$ and $\G_2$ arising from (distinct) unpunctured surfaces or orbifolds,
is it true that  $W_{\G_1}$ is not isomorphic to  $W_{\G_2}$?

(ii) What types of groups can be obtained as groups $W_{\G}$? 

\end{question}



\section{Exceptional diagrams} 
\label{except}

In this section, we construct the groups for the remaining exceptional mutation-finite diagrams, i.e. for diagrams which are neither 
block-decomposable nor of finite or affine type. By Theorem~\ref{class}, these diagrams are exhausted by the following mutation types:
$X_6$, $X_7$, $E_6^{(1,1)}$, $E_7^{(1,1)}$, $E_8^{(1,1)}$, $G_2^{(*,+)}$, $G_2^{(*,*)}$, $F_4^{(*,+)}$, $F_4^{(*,*)}$.

\begin{defin}[Group $W_\G$ for exceptional diagrams]
\label{def-exc} 
Let $\G$ be a diagram of an exceptional mutation type.
Define group $W_\G$ as the group with generators $s_1,\dots,s_n$ corresponding to the vertices of $\G$ and with relations  
\begin{itemize}
\item[(R1)] $s_i^2=e$ for $i=1,\dots,n$;
\item[(R2)] $(s_is_j)^{m_{ij}}=e$ for all vertices $i$, $j$ not joined by an arrow labeled by $4$ (where $m_{ij}$ are defined in Section~\ref{aff group def});
\item[(R3)] cycle relation for every chordless oriented cycle (see relations of type (R3) in Section~\ref{aff group def});
\item[(R4)] (additional affine relations) for every subdiagram of $\G$ of the form shown in the first column of Table~\ref{add-t} we take the relations listed in the second column of the table; 
\item[(R5*)] additional $X_5${\it -relation}
$$(s_1s_0s_2s_0s_1s_3s_0s_4s_0s_3)^2=e
$$
for diagram $X_5$ shown in Fig.~\ref{X5}.

\end{itemize}

\end{defin}

\begin{figure}[!h]
\begin{center}
\psfrag{X5}{ $X_5$}
\psfrag{4}{\scriptsize $4$}
\psfrag{2}{\scriptsize $2$}
\psfrag{s1}{\scriptsize $1$}
\psfrag{s2}{\scriptsize $2$}
\psfrag{s3}{\scriptsize $3$}
\psfrag{s4}{\scriptsize $4$}
\psfrag{s0}{\scriptsize $0$}
\epsfig{file=./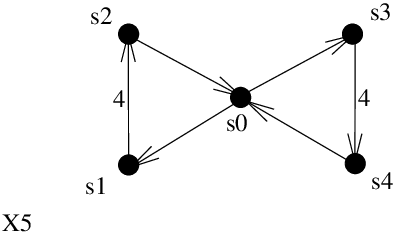,width=0.25\linewidth}
\caption{A diagram $X_5$.}
\label{X5}
\end{center}
\end{figure}

\begin{remark}
(i) For non-decomposable diagrams of finite or affine type the definition above coincides with ones from~\cite{BM} and Section~\ref{aff group def}. 

\medskip
\noindent
(ii) The relation (R5*) is equivalent to $(s_2s_0s_1s_0s_2s_4s_0s_3s_0s_4)^2=e$.

\medskip
\noindent
(iii) Relation (R5*) is necessary for mutation classes $X_6$ and $X_7$ only.

\medskip
\noindent
(iv) The diagram $X_5$ corresponds to a triangulated punctured annulus. We expect that relation (R5*) will lose its exceptional character when we will define the group $W_\G$ for surfaces with punctures.
 
\end{remark}

\begin{theorem}
\label{thm exc}
If $\G$ is a diagram of the exceptional finite mutation type 
(i.e. $\G$ is mutation-equivalent to one of $X_6,X_7,E_6^{(1,1)},E_7^{(1,1)},E_8^{(1,1)},G_2^{(*,+)},G_2^{(*,*)},F_4^{(*,+)}$ or
$F_4^{(*,*)}$)
then  the group  $W_{\G}$ is invariant under mutations.
\end{theorem}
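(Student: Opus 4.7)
The plan is to reuse verbatim the framework developed for Theorem~\ref{thm-aff}. As before, define a \emph{pseudo-cycle} of $\G$ to be the support of any relation of type (R1)--(R5*), and a \emph{risk diagram} for $\G$ to be a connected subdiagram $\R=x\cup\P$ of some diagram in the mutation class of $\G$, where $\P$ is a pseudo-cycle and $x$ is incident to both an incoming and an outgoing arrow. Lemmas~\ref{in and out} and~\ref{outside} apply without change, since their statements and proofs never use that $\G$ is of affine type. Thus it suffices to verify Condition~(C1) for every pseudo-cycle and Condition~(C2) for every risk diagram for $\G$. A crucial simplifying fact is that each of the nine exceptional mutation classes is finite by Theorem~\ref{class}, so both enumerations are finite combinatorial tasks that can be carried out using~\cite{Kel}.

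For Condition~(C1), the supports of relations of types (R1)--(R4) have already been settled in Corollary~\ref{all-ps}, so the only genuinely new pseudo-cycle is the diagram $X_5$ of Fig.~\ref{X5} together with its two $X_5$-relations~(R5*). This case is handled by direct calculation of the style of Example~\ref{ex-gt2}: for each vertex $i$ of $X_5$, write down the presentation of $W_{\mu_i(X_5)}$, express its generators via the mutation rule in terms of the generators of $W_{X_5}$, and check that the two sets of defining relations are consequences of each other modulo these substitutions. The symmetry of $X_5$ swapping the pairs $\{1,2\}$ with $\{3,4\}$ (which interchanges the two $X_5$-relations) substantially cuts the number of essentially distinct mutations that must be checked.

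Condition~(C2) is verified by running through each exceptional mutation class and enumerating its risk diagrams. Any pseudo-cycle $\P$ that arises is either an oriented cycle (and hence either simply-laced or one of the thirteen diagrams of Table~\ref{short cycles} by Lemma~\ref{cycles}), one of the five affine pseudo-cycles of Table~\ref{add-t}, or $X_5$. For each mutation class we list all $\R=x\cup\P$ using~\cite{Kel}. Whenever $\R$ itself is contained in some finite, affine or block-decomposable diagram, Lemma~\ref{order} together with Theorem~\ref{bm}, Theorem~\ref{thm-aff} or Theorem~\ref{thm surf/orb} supplies Condition~(C2) for free. The remaining truly exceptional risk diagrams---those whose pseudo-cycle is $X_5$, and those that do not embed in any non-exceptional diagram---are handled one at a time by explicit computation, again following the pattern of Example~\ref{ex-gt2}, with the mutations $\mu_x$ reducing $\R$ either to a pseudo-cycle whose (C1) is already known or to another risk diagram already treated.

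The main obstacle is the bookkeeping in the (C2) verification: one must be certain that no risk diagram has been overlooked, particularly in the mutation classes $E_6^{(1,1)}$, $E_7^{(1,1)}$, $E_8^{(1,1)}$, which are by far the largest. A systematic safeguard is to go through each mutation class separately and enumerate all subdiagrams of the shape $x\cup\P$ on at most $|\P|+1$ vertices, checking that each either lies inside a previously handled ambient diagram or has been treated directly. A secondary difficulty is the length of the $X_5$-relations, which makes the calculations for (C1) of $X_5$ and for the new risk diagrams containing $X_5$ tedious, though mechanical. Once (C1) and (C2) are established for all pseudo-cycles and risk diagrams, Lemma~\ref{in and out} yields the mutation invariance of $W_\G$.
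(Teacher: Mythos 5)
Your proposal follows essentially the same route as the paper: both reduce the statement to checking Conditions (C1) and (C2) via Lemmas~\ref{in and out} and~\ref{outside}, observe that the only new pseudo-cycle is $X_5$, dispose of risk diagrams that embed in finite, affine or block-decomposable diagrams by Lemma~\ref{order} and the earlier theorems, and handle the few genuinely new configurations by direct computation. The only minor difference is organizational: for the large classes $E_n^{(1,1)}$ and $F_4^{(*,\cdot)}$ the paper avoids brute-force enumeration of the mutation class by constructing risk diagrams as extensions $x\cup\P$ of pseudo-cycles and invoking the structural results of Lemmas~\ref{simple}--\ref{dn}, whereas you propose a direct computer search, which is heavier but equally valid given mutation-finiteness.
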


Note that, similarly to the groups constructed for surfaces or orbifolds, the groups obtained in the exceptional cases are quotients of Coxeter 
groups. We do not know whether these groups are distinct for different mutation classes or not.

To prove Theorem~\ref{thm exc} we consider cases of $X_n$, $E_n^{(1,1)}$, $G_2^{(\cdot,\cdot)}$ and $F_4^{(\cdot,\cdot)}$ separately.

\subsection{Groups for $X_6$ and $X_7$}

The proof of the invariance of the group $W_\G$ under mutations 
 is a straightforward check of pseudo-cycles and risk diagrams based on Lemma~\ref{in and out}.

More precisely, first we check condition {\rm{(C1)}} for the pseudo-cycle of type $X_5$.
Then we check that there is no risk diagrams containing the pseudo-cycle $X_5$: we look through the mutation classes of $X_6$ and $X_7$ using the fact the they are small (containing 5 and 2 diagrams respectively).
We also check that if $\R$ is a risk diagram containing some pseudo-cycle then either $\R$ is a risk diagram for some diagram of affine type 
or $\R=X_5$. 
Condition {\rm{(C2)}} for risk subdiagrams of diagrams of affine type has already been checked above.
{\rm{(C2)}} for the risk diagram $\R=X_5$ is {\rm{(C1)}} for the pseudo-cycle of type $X_5$. 

%
%

\subsection{Groups for diagrams  $G_2^{(*,+)}$ and $G_2^{(*,*)}$}

The proof of the invariance is a direct check due to small mutation classes (6 and 2 diagrams respectively).



\subsection{Groups for diagrams  $F_4^{(*,+)}$ and $F_4^{(*,*)}$}

The mutation classes of $F_4^{(*,+)}$ and $F_4^{(*,*)}$ are rather large (90 and 35 diagrams respectively), so we use pseudo-cycles and risk diagrams. 

More precisely, if $\P$ is a pseudo-cycle and it is not a subdiagram of any affine diagram, then $\P$ defines a relation of type (R3) (cyclic relation) and is one of the cycles listed in Table~\ref{short cycles}.
There is a unique pseudo-cycle which is not a subdiagram of any affine diagram and does not contain arrows labeled by $3$, namely the cyclic diagram shown in row 10 of the table. A straightforward computation shows that {\rm{(C1)}} holds for this pseudo-cycle.

Now, we need to list and check all risk diagrams. 

\begin{lemma}
\label{l f__}
Condition {\rm{(C2)}} holds for all risk subdiagrams of  $F_4^{(*,+)}$ and $F_4^{(*,*)}$.
\end{lemma}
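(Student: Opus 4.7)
The plan is to reduce the verification of (C2) to a finite enumeration that leverages the previous sections as much as possible. First I would classify the pseudo-cycles $\P$ that can appear as subdiagrams of diagrams in the mutation classes of $F_4^{(*,+)}$ and $F_4^{(*,*)}$. Since neither class contains arrows labeled by $3$, Lemma~\ref{cycles} and the list of pseudo-cycles of types (R1)--(R4) leaves the following possibilities: single vertices and 2-vertex subdiagrams (trivially handled), oriented cycles of finite or affine type not involving a $3$-labeled arrow, affine subdiagrams from Table~\ref{add-t} of types $\widetilde A_{2,2}$, $\widetilde D_k$, $\widetilde B_k$, and the one genuinely ``exceptional'' pseudo-cycle in row~$10$ of Table~\ref{short cycles} (the $F_4^{(*,+)}$ cycle), for which (C1) has already been checked at the start of this section.

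Next, I would split the risk diagrams $\R=x\cup\P$ into two families. The first consists of those $\R$ which are subdiagrams of some \emph{affine} diagram in the mutation class of $\widetilde F_4$, $\widetilde B_n$, $\widetilde C_n$, $\widetilde D_n$, or $\widetilde A_{p,q}$; for all such $\R$, condition (C2) was already verified in Sections~\ref{sec a}--\ref{sec bc} and in~\cite{BM}, and we invoke Lemma~\ref{order} to transfer the verification to the present setting. The second family consists of the ``genuinely exceptional'' risk diagrams: those that embed in no proper affine or finite subdiagram. A subdiagram of a diagram of mutation type $F_4^{(*,+)}$ or $F_4^{(*,*)}$ is itself mutation-finite, so by Theorem~\ref{class} it is block-decomposable or of one of the listed exceptional types; combined with the size and weight constraints from the classification in~\cite{FeSTu1,FeSTu2,FeSTu3}, this strongly restricts the combinatorial shape of an exceptional $\R$.

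I would then enumerate the remaining exceptional risk diagrams directly, exploiting the finiteness of the two mutation classes (90 and 35 diagrams, both readily accessible via~\cite{Kel}). For each candidate $\R = x \cup \P$, I would either recognize it up to relabeling as a risk diagram already handled in a previous section, or, if it is genuinely new, perform the mutation $\mu_x$ and verify that the resulting diagram is either another such risk diagram (so that Lemma~\ref{one way} applies) or a pseudo-cycle for which (C1) has been established. In the few cases where no such shortcut works, I would verify (C2) by a direct computation in the same spirit as Example~\ref{ex-gt2}, rewriting the relations of $W_{\mu_x(\R)}$ using the substitution $s_i \mapsto s_x s_i s_x$ on the incoming arrows and checking them against the relations of $W_\R$.

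The main obstacle will be controlling the combinatorial explosion in the enumeration of exceptional risk diagrams: since the pseudo-cycle $\P$ from row~$10$ of Table~\ref{short cycles} has order $4$ and contains arrows labeled $2$, the risk diagrams $\R$ of order $5$ built over it can a priori be numerous, and one must verify for each that it really occurs as a subdiagram of some diagram in the mutation class (rather than being mutation-infinite or lying outside the class). I expect that, as in the $\widetilde F_4$ case (Fig.~\ref{f-risk}), only a small handful of genuinely new risk diagrams survive this filtering, and for each the direct check reduces, after a single mutation at $x$, to the already established (C1) for an oriented cycle in Table~\ref{short cycles}.
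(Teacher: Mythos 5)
Your overall strategy coincides with the paper's: dispose of risk diagrams that live inside affine (or block-decomposable) diagrams by citing the earlier sections, and then enumerate the genuinely exceptional risk diagrams inside the finite mutation classes, reducing each new one via $\mu_x$ to an already-verified condition {\rm{(C1)}} for a pseudo-cycle. That is exactly what the paper does, and the one surviving exceptional risk diagram does mutate into the cyclic diagram of type $F_4^{(*,+)}$.

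However, your analysis of where the work lies rests on a concrete misreading: the cyclic pseudo-cycle in row~10 of Table~\ref{short cycles} has order $6$, not $4$ (rows 8, 9, 10 are cycles of orders $4$, $5$, $6$ of mutation types $F_4$, $\widetilde F_4$, $F_4^{(*,+)}$ respectively). Since the diagrams of mutation types $F_4^{(*,+)}$ and $F_4^{(*,*)}$ themselves have only $6$ vertices, no risk diagram $x\cup\P$ with $\P$ the row-10 cycle can exist at all, so the ``combinatorial explosion'' you anticipate over that pseudo-cycle is vacuous. The actual content of the enumeration is elsewhere, and the paper pins it down with a size count you omit: any diagram of order at most $5$ with no arrow labeled by $3$ is either block-decomposable or a subdiagram of $\widetilde F_4$, hence already checked; therefore only risk diagrams of order exactly $6$ remain, and these are necessarily of the form $x\cup\P$ with $\P$ one of the four pseudo-cycles of order $5$ (a simply-laced oriented $5$-cycle, the cyclic $\widetilde F_4$ diagram of row~9, and the additional affine pseudo-cycles of types $\widetilde D_4$ and $\widetilde B_4$). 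Imposing mutation-finiteness, the incoming/outgoing condition on $x$, and the presence of a weight-$2$ arrow leaves a single non-decomposable diagram, which $\mu_x$ carries to the row-10 cycle. Your plan would likely self-correct once you started the enumeration, but as written it both chases a nonexistent case and leaves the finiteness of the real case resting only on a vague appeal to the classification rather than on the order-$\le 5$ reduction that makes the check short.
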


\begin{proof}
First, we do not need to check any decomposable risk diagrams 
(by results of Sections~\ref{sec a},~\ref{sec d} and~\ref{sec bc}) or subdiagrams of affine diagrams. This implies that we are not interested in risk diagrams of size smaller than 6
(since any diagram of size at most 5 and  containing no arrows labeled by $3$ is either block-decomposable or a subdiagram of 
$\widetilde F_4$). So, we need to study risk diagrams of order 6 only, i.e. the diagrams mutation-equivalent to 
 $F_4^{(*,+)}$ or $F_4^{(*,*)}$.

To check risk subdiagrams of order 6 we consider all pseudo-cycles of order 5 and add an additional vertex $x$ to them.
There are 4 pseudo-cycles of order 5, namely a simply-laced cycle, the cyclic diagram of mutation type $\widetilde F_4$ (shown in row 9 of Table~\ref{short cycles}), and additional affine pseudo-cycles of types $\widetilde D_4$ and $\widetilde B_4$. 
For each pseudo-cycle $\P$ we add a vertex $x$ such that 
\begin{itemize}
\item $x\cup \P$ is mutation-finite;
\item $x$ is connected to $\P$ by at least one outgoing and at least one incoming arrow;
\item $x\cup \P$ has at least one arrow labeled by $2$ (otherwise we get either block-decomposable diagram, or $E_6$ or $X_6$).

\end{itemize}

The mutation-finiteness of $\R=x\cup \P$ implies in particular that
\begin{itemize}
\item $x$ is connected to $\P$ by arrows labeled by $1$, $2$ or $4$ only;
\item all non-oriented cycles in $\R$ are simply-laced (see  Remark~\ref{non-or cycles} below).

\end{itemize}

It turns out after a short case-by-case study that any mutation-finite diagram $\R =x\cup \P$ of the required type is
either block-decomposable (which is not the case for the diagram of mutation type  $F_4^{(*,+)}$ or $F_4^{(*,*)}$)
or the diagram shown in Fig.~\ref{f__}. The mutation $\mu_x$ turns the latter diagram into the cyclic diagram
of mutation type $F_4^{(*,+)}$ (row 10 in Table~\ref{short cycles}), so  {\rm{(C2)}} for this risk diagram
was checked as a  {\rm{(C1)}} for the cyclic pseudo-cycle.

\end{proof}

\begin{figure}[!h]
\begin{center}
\psfrag{x}{\scriptsize $x$}
\psfrag{2}{\scriptsize $2$}
\epsfig{file=./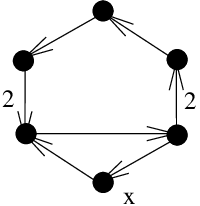,width=0.15\linewidth}
\caption{To the proof of Lemma~\ref{l f__}.}
\label{f__}
\end{center}
\end{figure}

\begin{remark}
\label{non-or cycles}
It is an easy observation that all non-oriented mutation-finite cycles are simply-laced. In skew-symmetric case this was mentioned in~\cite{S1}. 

\end{remark}

\subsection{Groups for diagrams  $E_6^{(1,1)}$, $E_7^{(1,1)}$ and  $E_8^{(1,1)}$}

To check the invariance of the groups we consider pseudo-cycles and risk diagrams.

\begin{lemma}
Conditions {\rm{(C1)}} and {\rm{(C2)}} hold for all pseudo-cycles and all risk diagrams of
  $E_6^{(1,1)}$, $E_7^{(1,1)}$ and  $E_8^{(1,1)}$.

\end{lemma}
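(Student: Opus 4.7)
I will follow the approach used for $\widetilde E_n$ in Lemma~\ref{lE}. Since the diagrams $E_n^{(1,1)}$ are skew-symmetric, every arrow in any mutation-equivalent diagram has weight $1$ or $4$. Consequently, the only pseudo-cycles that can appear are single vertices (supporting (R1)), two-vertex subdiagrams (supporting (R2)), simply-laced oriented cycles (supporting (R3)), and affine pseudo-cycles of types $\widetilde A_{2,2}$ and $\widetilde D_k$ from Table~\ref{add-t} (supporting (R4)); the remaining entries in Table~\ref{add-t} contain arrows of weight $2$ or $3$ and therefore cannot embed in a skew-symmetric diagram. Condition (C1) for all of these pseudo-cycles has already been established in Corollary~\ref{all-ps}, so no new work is required there.

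For condition (C2), let $\R=x\cup\P$ be a risk subdiagram of some diagram in the mutation class of $E_n^{(1,1)}$. By Corollary~\ref{sum}, $\R$ is block-decomposable. My goal is to show that the mutation type of $\R$ belongs to the list $A_k$, $D_k$, $\widetilde A_{p,q}$, $\widetilde D_k$, for which (C2) has already been verified in~\cite{BM} and Sections~\ref{sec a} and~\ref{sec d}. The difference from the $\widetilde E_n$ argument is that Lemma~\ref{subd of aff} is not available here, so I cannot conclude formally that $\R$ is of finite or affine type. Instead I will exploit the size bound $|\R|\le n+2\le 10$ together with the structural description of block decompositions of $\R$ supplied by Lemmas~\ref{simple}, \ref{an}, \ref{l-wheel} and~\ref{dn}, which control precisely how $x$ may attach to $\P$ (only via blocks of types I, II, or IV, and with $\R$ forced to be simply-laced as soon as $|\P|>4$). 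These restrictions reduce the problem to a short finite case analysis: for each shape of $\P$, enumerate block-decomposable completions $x\cup\P$ of size at most $10$ and verify that each one is of type $A_k$, $D_k$, $\widetilde A_{p,q}$ or $\widetilde D_k$.

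The main obstacle lies in this last reduction. One must rule out block-decomposable risk subdiagrams coming from triangulated surfaces more elaborate than disks or annuli with at most two punctures (for instance twice-punctured annuli, thrice-punctured disks, or once-punctured tori). The combination of the size bound $|\R|\le 10$ and the requirement that $\R$ embed into the mutation class of $E_n^{(1,1)}$, which in particular forbids any handle-subdiagram $\H$ (since no such $\H$ occurs in these mutation classes), is what makes this step feasible; as a safety net, the (finite) mutation classes of $E_6^{(1,1)}$, $E_7^{(1,1)}$ and $E_8^{(1,1)}$ can be inspected directly with the aid of~\cite{Kel} to confirm the enumeration.
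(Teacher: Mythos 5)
There is a genuine gap in your enumeration of pseudo-cycles. From ``skew-symmetric'' you conclude that the only oriented cycles supporting (R3) relations are simply-laced, but skew-symmetric only forces every arrow to have weight $1$ or $4$, and Lemma~\ref{cycles} shows that a mutation-finite oriented chordless cycle may be the triangle with weights $1,1,4$ (row 1 of Table~\ref{short cycles}, mutation type $\widetilde A_{2,1}$). This triangle genuinely occurs here: the standard diagrams $E_6^{(1,1)}$, $E_7^{(1,1)}$, $E_8^{(1,1)}$ each contain a double arrow sitting inside oriented triangles, so $\widetilde A_{2,1}$ is a pseudo-cycle of these mutation classes and supports a nontrivial (R3) relation. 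Your treatment of (C2) then inherits the omission: Corollary~\ref{sum} is stated only for simply-laced oriented cycles and for pseudo-cycles of types $\widetilde A_{2,2}$ and $\widetilde D_k$, so it says nothing about risk diagrams $x\cup\P$ built on the $(1,1,4)$ triangle, and Lemma~\ref{simple} does not help either since it requires $|\P|>4$. These order-$4$ risk diagrams (which contain a double arrow) must be listed and checked separately; the paper's own proof does exactly this, singling out the pseudo-cycles of types $\widetilde A_{2,1}$ and $\widetilde A_{2,2}$ for an explicit check and reserving the Corollary~\ref{sum}-style block-decomposition argument for the simply-laced cycles and the $\widetilde D_n$ pseudo-cycles.

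Apart from this missing case, your route matches the paper's: (C1) follows from Corollary~\ref{all-ps} because no new pseudo-cycle types arise, and the remaining risk diagrams are reduced via block decomposability to types $A_k$, $D_k$, $\widetilde A_{p,q}$, $\widetilde D_k$ already handled in~\cite{BM} and Sections~\ref{sec a} and~\ref{sec d}. Your observation that Lemma~\ref{subd of aff} is unavailable for $E_n^{(1,1)}$ is a fair point, and your proposed remedy (the size bound plus direct inspection of the finite mutation classes) is acceptable, if somewhat heavier than the paper's appeal to the explicit structure in Remark~\ref{bl-I}. But the argument is not complete until the $\widetilde A_{2,1}$ pseudo-cycle and its risk diagrams are added to the case analysis.
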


\begin{proof}
Condition {\rm{(C1)}} holds since we have not introduced any new pseudo-cycles (comparing to the affine case).

To prove that {\rm{(C2)}} holds note that the diagrams $E_6^{(1,1)}$, $E_7^{(1,1)}$ and  $E_8^{(1,1)}$ are skew-symmetric,
which implies that  any pseudo-cycle is of one of the following forms:
\begin{itemize}
\item a simply-laced cycle; 
\item a cycle of type $\widetilde A_{2,1}$ (row 1 in Table~\ref{short cycles});
\item an additional affine pseudo-cycle of type $\widetilde A_{2,2}$; 
\item an additional affine pseudo-cycle of type $\widetilde D_n$. 

\end{itemize}

The risk diagrams containing pseudo-cycles of types  $\widetilde A_{2,1}$  and  $\widetilde A_{2,2}$ can be checked explicitly.
The risk diagrams for a simply-laced cycle and an additional affine pseudo-cycle of type $\widetilde D_n$ 
are described in  Remark~\ref{bl-I} and also can be easily checked.

\end{proof}

This completes the proof of Theorem~\ref{thm exc}.


\begin{thebibliography}{33}
\bibitem[BCP]{BCP} W.~Bosma, J.~J.~Cannon, C.~Playoust, {\em The Magma algebra system. I. The user language}, J. Symbolic Comput. 24 (1997), 235--265. 

\bibitem[BM]{BM} M.~Barot, R.~J.~Marsh, {\em Reflection group presentations arising from cluster algebras}, Trans. Amer. Math. Soc. 367 (2015), 1945--1967.


\bibitem[BMR]{BMR} A.~B.~Buan, R.~J.~Marsh, I.~Reiten, {\it Cluster mutation via quiver representations}, Comment. Math. Helv. 83 (2008), 143--177.

\bibitem[Fe]{Fe} A.~Felikson, {\em Spherical simplices generating discrete reflection groups}, 
Sb. Math. 195 (2004), 585--598.

\bibitem[FeSThTu]{FeSThTu} A.~Felikson, M.~Shapiro, H.~Thomas, P.~Tumarkin,
{\em Growth rate of cluster algebras}, Proc. London Math. Soc. 109 (2014), 653--675.

\bibitem[FeSTu1]{FeSTu1} A.~Felikson, M.~Shapiro, P.~Tumarkin,
{\em Skew-symmetric cluster algebras of finite mutation type}, 
J. Eur. Math. Soc. 14 (2012), 1135--1180.

\bibitem[FeSTu2]{FeSTu2}
A.~Felikson, M.~Shapiro, P.~Tumarkin,
{\em Cluster algebras of finite mutation type via unfoldings}, 
Int. Math. Res. Notices 8 (2012), 1768--1804. 

\bibitem[FeSTu3]{FeSTu3}
A.~Felikson, M.~Shapiro, P.~Tumarkin,
{\em Cluster algebras and triangulated orbifolds}, Adv. Math. 231 (2012), 2953--3002

\bibitem[FeTu]{FeTu}
A.~Felikson, P.~Tumarkin, {\em Coxeter groups, quiver mutations and geometric manifolds}, arXiv:1409.3427

\bibitem[FST]{FST} S.~Fomin, M.~Shapiro, D.~Thurston,
{\em Cluster algebras and triangulated surfaces. Part I: Cluster complexes}, Acta Math. 201 (2008), 83--146.


\bibitem[FZ]{FZ2} S.~Fomin, A.~Zelevinsky, {\it Cluster algebras II: Finite type classification}, 
Invent. Math. 154 (2003), 63--121.

\bibitem[K]{Kac} V.~Kac, {\em Infinite-dimensional Lie algebras}, Cambridge Univ. Press, London, 1985.

\bibitem[Kel]{Kel} B.~Keller, Quiver mutation in  {\rm {\tt Java}}, \url{www.math.jussieu.fr/~keller/quivermutation}

\bibitem[M]{M} A.~Malcev, {\it On isomorphic matrix representations of infinite groups}, Mat. Sb. 8 (1940), 405--422; Amer. Math. Soc. Transl. (2) 45 (1965), 1--18.

\bibitem[P]{P} M.~J.~Parsons, {\em Companion bases for cluster-tilted algebras},  Algebr. Represent. Theory 17 (2014), 775--808. 

\bibitem[Se1]{S1} A.~Seven, {\em Quivers of finite mutation type and skew-symmetric matrices},
 Linear Algebra Appl. 433 (2010), 1154--1169. 
 
\bibitem[Se2]{S} A.~Seven, {\em Reflection group relations arising from cluster algebras},
arXiv:1210.6217

\bibitem[Z]{Z} B.~Zhu, {\em Preprojective cluster variables of acyclic cluster
algebras}, Comm. Algebra 35 (2007), 2857--2871.


\end{thebibliography}
\end{document}